\newtheorem{theo}{Theorem}[section]
\newtheorem{lemma}[theo]{Lemma}
\newtheorem{defi}[theo]{Definition}
\newtheorem{prop}[theo]{Proposition}
\newtheorem{cor}[theo]{Corollary}
\newtheorem{remark}[theo]{Remark}
\numberwithin{equation}{section}
\def\R{\mathbb{R}}
\def\C{\mathbb{C}}
\def\Z{\mathbb{Z}}
\def\F{{\mathcal F}}
\def\bR{{\mathbf R}}
\def\PP{{\mathbb P}}
\def\pre-tr{\operatorname{pre-tr}}
\def\Hom{\operatorname{Hom}}
\def\End{\operatorname{End}}
\newcommand{\mk}{\mathrm k}
\newcommand{\cF}{{\mathcal F}}
\newcommand{\cO}{{\mathcal O}}
\newcommand{\cL}{{\mathcal L}}
\newcommand{\cM}{{\mathcal M}}
\newcommand{\cA}{{\mathcal A}}
\newcommand{\cB}{{\mathcal B}}
\newcommand{\cC}{{\mathcal C}}
\newcommand{\cU}{{\mathcal U}}
\newcommand{\mg}{\mathfrak g}
\newcommand{\mh}{\mathfrak h}
\newcommand{\diag}{\operatorname{diag}}
\newcommand{\sgn}{\operatorname{sgn}}
\newcommand{\Perf}{\operatorname{Perf}}
\newcommand{\supp}{\operatorname{Supp}}
\newcommand{\coker}{\operatorname{Coker}}
\newcommand{\Ext}{\operatorname{Ext}}
\newcommand{\Id}{\operatorname{Id}}
\newcommand{\ord}{\operatorname{ord}}
\newcommand{\Sym}{\operatorname{Sym}}
\newcommand{\Hilb}{\rm Hilb}
\newcommand{\Ho}{\operatorname{Ho}}
\newcommand{\id}{\operatorname{id}}
\newcommand{\ev}{\operatorname{ev}}
\title[Homological mirror symmetry for curves of higher genus]
{Homological mirror symmetry for curves of higher genus}
\author{Alexander I. Efimov}
\address{Steklov Mathematical Institute of RAS, Gubkin str. 8, GSP-1, Moscow 119991, Russia} \email{efimov@mccme.ru}
\thanks{The author was partially supported by
the Moebius Contest Foundation for Young Scientists, RFBR (grant 4713.2010.1) and "Dynasty" foundation.}
\begin{document}

\begin{abstract} This paper is devoted to homological mirror symmetry conjecture for curves of higher genus.
It was proposed by Katzarkov as a generalization of original Kontsevich's conjecture.

A version of this conjecture in the
case of the genus two curve was proved by Seidel \cite{Se}. Based on the paper of Seidel, we prove the
conjecture (in the same version) for curves of genus $g\geq 3.$
Namely, we relate the Fukaya category of a genus $g$ curve to the category
of singularities of zero fiber in the mirror dual Landau-Ginzburg model.

We also prove a kind of reconstruction theorem for hypersurface
singularities. Namely, formal type of hypersurface singularity
(i.e. a formal power series up to a formal change of variables)
can be reconstructed, with some technical assumptions, from its
D$(\Z/2)\text{-}$G category of Landau-Ginzburg branes. The precise
statement is Theorem \ref{reconstr_intro}.
\end{abstract}

\maketitle

%\tableofcontents

\section{Introduction}

The homological mirror symmetry conjecture is a categorical interpretation of mirror symmetry. Originally, it
was proposed by Kontsevich \cite{Kon} for
Calabi-Yau varieties. It was proved in some special cases
\cite{AS, PZ, Se3}.

An analogue of the conjecture for
Fano varieties has been proposed soon after. In this case the mirror is a Landau-Ginzburg model --- a smooth algebraic variety
together with a regular function. More generally, it is believed that one can consider varieties with effective anti-canonical divisor,
 see \cite{Au}.

Katzarkov \cite{Ka, KKP, KKOY} has proposed a generalization of
Homological Mirror Symmetry, which includes some varieties of
general type. The mirror to such variety is a Landau-Ginzburg model.
One
direction of Katzarkov's conjecture was proved by Seidel in the
case of the genus $2$ curve \cite{Se}. The main aim of this paper is to
prove it in the case of curves of genus $g\geq 3.$
Actually, we follow the steps of Seidel's proof in the genus $2$ case, and generalize it to genus $g\geq 3$ case.

We treat genus $\geq 3$ curves as symplectic varieties, and associate to them Fukaya categories. Further, Landau-Ginzburg models
 are considered algebro-geometrically. The associated categories are
 triangulated categories of singularities of singular fiber \cite{Or}.

Let $M$ be a symplectic compact oriented surface of genus $g\geq 3.$
The mirror Landau-Ginzburg (LG for short)
model $W:X\to \C$ is three-dimensional. The only singular fibre $H:=X_0\subset X$ is a union of
$(g+1)$ surfaces. This LG model will
be constructed explicitly in Section \ref{LG_model}.

We denote by $\cF(M)$ the Fukaya
$A_{\infty}$-category of M, and by $D^{\pi}(\cF(M))$ the category of perfect complexes over $\cF(M).$ Further, let $D_{sg}(H)$
be the category of singularities of the surface $H$, and denote by
$\overline{D_{sg}}(H)$ its Karoubian completion. The main result of the paper is the following.

\begin{theo} The triangulated categories
$D^{\pi}(F(M))$ and $\overline{D_{sg}}(H)$ are equivalent.\end{theo}

The main ideas in the proof are the same as in \cite{Se}. We sketch the steps of the proof.

Take $V = \C^3.$ We denote by $\xi_k\in V,$ $k=1,2,3$ the standard basis vectors
of $V,$ and by $z_k\in V^*,$ $k=1,2,3$  the
dual basis. Take the $K$-invariant polynomial
\begin{equation}\label{potential}W = -z_1z_2z_3 + z_1^{2g+1} + z_2^{2g+1} + z_3^{2g+1}\in \C[V^{\vee}]^{K},\end{equation}
where $K\cong \Z/(2g+1)\subset SL(V)$ is the cyclic subgroup generated by
the diagonal matrix $\diag(\zeta, \zeta, \zeta^{2g-1}),$ with
$\zeta=\exp(\frac{2\pi i}{2g+1}).$

{\it A generator of Fukaya category.} The generator of $D^{\pi}(\cF(M))$ is constructed as follows. We consider a cyclic covering
$\pi:M\to \bar{M},$ where $\bar{M}$ is  $\PP^1$ with three orbifold points.
The Galois group of this covering is
$\Sigma=\Hom(K,\C^*)\cong \Z/(2g+1).$ There is a nice Galois-invariant collection of
curves $L_1,\dots,L_{2g+1}\subset M,$ such that

1) the object $L_1\oplus\dots\oplus L_{2g+1}\in D^{\pi}(\cF(M))$ is a generator;

2) the projection $\pi(L_i)$ of each of these curves is the immersed curve
$\bar{L}\subset \bar{M}.$

Here to prove generation we use the criterions of Seidel (\cite{Se}, Lemmas 6.4 and 6.5). The endomorphism $A_{\infty}$-algebra
$\End(\bigoplus_{1\leq i\leq 2g+1} L_i)$ is a smash product $\End(\bar{L})\#\C[K].$  The Floer cohomology super-algebra $HF^{\cdot}(\bar{L},\bar{L})$ is
isomorphic to the exterior super-algebra $\Lambda(V).$ We compute some
higher $A_{\infty}$-operations which uniquely determine the whole $A_{\infty}$-structure (up to homotopy). This computation is analogous to that of \cite{Se}, Section 10, and is in fact combinatorial, as in the approach of Abouzaid \cite{Ab}.

{\it Classification of $A_{\infty}$-structures.} The super-algebra $\Lambda(V)$ has a lot of (homotopy classes of) $\Z/2\text{-}$graded
$A_{\infty}\text{-}$structures. These $A_{\infty}$-structures are actually Maurer-Cartan solutions in the differential graded Lie algebra of Hochschild
cochains. We use Kontsevich's formality theorem
\cite{Ko} (in the suitable version) to reduce classification of $A_{\infty}$-structures
to some questions on formal polyvector fields on $V.$ It turns out
that the $A_{\infty}\text{-}$algebra $\End(\bar{L})$ above (which gives an $A_{\infty}$-structure on $\Lambda(V)\cong HF^{\cdot}(\bar{L},\bar{L})$), corresponds to the (gauge equivalence
class of) the superpotential $W$ (considered as a polyvector field). This part of the paper generalizes \cite{Se}, sections 4 and 5. Technical details here are more complicated than in \cite{Se}.

{\it Matrix factorizations.} It is well known that the triangulated category of singularities of a fiber $W^{-1}(0)$ is equivalent to the homotopy category of matrix
factorizations of $W$ \cite{Or}. In our case, the structure sheaf
of the origin $\cO_0$ is a split-generator in the category of singularities. We take
the matrix factorization corresponding to this skyscraper sheaf
$\cO_0$. The endomorphism DGA of this matrix factorization turns
out to be quasi-isomorphic to the $A_{\infty}\text{-}$algebra
computed on the Fukaya side. Namely, the cohomology super-algebra
of this DGA is isomorphic to the exterior algebra $\Lambda(V)$ and
again the resulted $A_{\infty}\text{-}$structure corresponds to
the superpotential $W$ in polyvector fields. This part generalizes \cite{Se}, sections 11, 12.

Here we also prove
the following general reconstruction theorem (more precise formulation is Theorem
\ref{reconstr}):

\begin{theo}\label{reconstr_intro} Let $\mk$ be a field of characteristic zero, $n\geq 1,$ and $V=\mk^n.$ Let $W=\sum\limits_{i=3}^dW_i\in
\mk[V^{\vee}]$be a non-zero polynomial, where $W_i\in
\Sym^i(V^{\vee}).$ Then $W$ can be reconstructed, up to a formal
change of variables, from the quasi-isomorphism class of
D$(\Z/2)\text{-}$G algebra $\cB_W\cong
\bR\Hom_{D_{sg}(W^{-1}(0))}(\cO_0,\cO_0),$ the endomorphism D$(\Z/2)\text{-}$G
algebra of the structure sheaf $\cO_0$ in
$D_{sg}(W^{-1}(0)),$ together with identification
$H^{\cdot}(\cB_W)\cong \Lambda(V).$ Moreover, formal change of
variables is of the form
\begin{equation}z_i\to z_i+O(z^2).\end{equation}
\end{theo}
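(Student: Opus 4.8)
The plan is to reconstruct $W$ from the $A_\infty$-structure on $\Lambda(V)$ via Koszul duality and a term-by-term analysis of the Hochschild-cohomology obstruction classes. First I would recall that, by Orlov's theorem, $D_{sg}(W^{-1}(0))$ is equivalent to the homotopy category of matrix factorizations of $W$, and that $\mathcal{O}_0$ corresponds to the Koszul-type matrix factorization built from the linear forms $z_i$ (using $W=\sum W_i$ with $i\geq 3$, so $W$ has no linear or quadratic part, ensuring the Koszul complex on the $z_i$ is the relevant resolution). The endomorphism DG $(\Z/2)$-algebra $\mathcal{B}_W=\bR\Hom(\mathcal{O}_0,\mathcal{O}_0)$ then has cohomology the exterior algebra $\Lambda(V)=\Lambda(\xi_1,\dots,\xi_n)$, with $\xi_k$ in odd degree; the content of the theorem is that the $A_\infty$-structure transported to this cohomology remembers $W$.

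Next I would invoke the standard Koszul-duality / formality machinery (a version of Kontsevich's formality theorem, as cited as \cite{Ko} in the excerpt) to identify the moduli of $\Z/2$-graded $A_\infty$-structures on $\Lambda(V)$ up to gauge equivalence with the Maurer--Cartan moduli of the Hochschild DGLA of $\Lambda(V)$, and then with polyvector fields $T_{\mathrm{poly}}(V)$. Under this identification the class of the superpotential $W\in \Sym^{\geq 3}(V^\vee)\subset \widehat{\Sym}(V^\vee)$, viewed as a function on $V$ (a degree-zero polyvector field), corresponds to a Maurer--Cartan element; the relevant $A_\infty$-operations $\mu^d$ on $\Lambda(V)$ have leading terms determined by the degree-$d$ Taylor coefficient $W_d$ of $W$. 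Concretely, $\mu^3$ is the associative exterior product (encoding $W_3$ after contraction), and for $d\geq 4$, once $W_3,\dots,W_{d-1}$ have been normalized away by gauge, the first obstruction to further trivialization is a Hochschild cocycle whose cohomology class is exactly (the image under contraction of) $W_d$. So one proceeds inductively on $d$: given the truncation of $W$ through order $d-1$, the next operation $\mu^{d}$ determines $W_d$ up to a Hochschild coboundary, i.e.\ up to a change of variables, and applying that change of variables normalizes the $A_\infty$-structure through order $d$. Passing to the limit (which converges formally since the change of variables at step $d$ is $z_i\mapsto z_i+O(z^{d-1})$, hence the composite is $z_i\mapsto z_i+O(z^2)$ and well-defined as a formal substitution) reconstructs $W$ up to formal change of variables, and the triviality of the linear part of each substitution gives the asserted normal form $z_i\to z_i+O(z^2)$.

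The main obstacle, and where the technical work lies, is making the dictionary between Hochschild cochains of $\Lambda(V)$ and polyvector fields on $V$ precise enough in the $\Z/2$-graded, \emph{non-positively-graded} setting that the degree bookkeeping actually pins down $W_d$ and not merely ``some'' symmetric tensor. In the original genus-two argument one has extra grading coming from the weighting of the variables, but in the general reconstruction theorem $W$ is an arbitrary polynomial, so I cannot use an internal grading to separate the $W_i$; instead the separation must come purely from the length filtration on the $A_\infty$-operations (the number of inputs $d$) together with the filtration by polynomial degree. I would need to check carefully that (i) the HKR-type quasi-isomorphism $T_{\mathrm{poly}}(V)\xrightarrow{\sim} C^\bullet(\Lambda(V),\Lambda(V))$, suitably completed, identifies the piece of $HH^\bullet$ that obstructs normalizing $\mu^d$ with $\Sym^d(V^\vee)$ modulo the image of the linearized gauge action (which is precisely the tangent space to the orbit of formal changes of variables), and (ii) that the obstruction class really is the honest Taylor coefficient $W_d$ rather than $W_d$ plus corrections from lower-order terms — this requires tracking how the already-performed gauge transformations feed back into the higher operations. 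Once this inductive identification of obstruction class with Taylor coefficient is established, the reconstruction and the shape of the change of variables follow formally.
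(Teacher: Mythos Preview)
Your proposal shares the broad framework with the paper --- matrix factorizations, the Koszul model for $\cO_0$, transfer of the $A_\infty$-structure to $\Lambda(V)$, and Maurer--Cartan theory in pro-nilpotent DGLAs --- but the central mechanism is different, and the part you flag as ``the main obstacle'' is exactly where your argument has a genuine gap.

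You propose to use formality for $\Lambda(V)$ directly and then extract $W_d$ inductively as the obstruction class carried by $\mu^d$. The paper does \emph{not} do this. Instead it passes through Koszul duality at the level of Hochschild complexes: it introduces an auxiliary DGLA $Q$ built from the Koszul $\mk[[V^{\vee}]]$--$\Lambda(V)$--bimodule $X=\Lambda(V^{\vee})\otimes\mk[[V^{\vee}]]$ (following Keller), with filtered DGLA quasi-isomorphisms $p_1:Q\to\widetilde{\mh_1}=CC(\Lambda(V))$ and $p_2:Q\to\widetilde{\mh_2}=CC(\mk[[V^{\vee}]])$. Kontsevich formality is then applied on the \emph{polynomial} side, $\cU:\widetilde{\mg}\to\widetilde{\mh_2}$, where $\cU^1(W)$ is simply multiplication by $W$ and all higher $\cU^k(W,\dots,W)$ vanish. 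The crux is a short lemma: the components $f_j:\cA^{\otimes j}\to\cB_W$ of the homological-perturbation $A_\infty$-morphism, reinterpreted as maps $A^{\otimes j}\otimes X\to X$, together with the $\mu^i$ and with $W$ itself, assemble into a single MC element $\tilde\alpha\in Q^1$ satisfying $p_1(\tilde\alpha)=\alpha$ and $p_2(\tilde\alpha)=\cU_*(W)$. This one construction replaces your entire inductive step and makes your worry (ii) --- whether the obstruction is ``the honest $W_d$ rather than $W_d$ plus corrections'' --- disappear: no degree-by-degree comparison is needed.

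By contrast, your route via formality for $\Lambda(V)$ alone does not, as stated, establish that the transferred MC class equals the class of $W$; that is the statement to be proved, not an input. Your inductive claim that $\mu^d$ determines $W_d$ up to a Hochschild coboundary would require an explicit control of the tree sums defining $\mu^d$ in terms of the $W_i$ for general $W$, and you have not supplied that (the paper only does such an explicit count in the very special case $W=-z_1z_2z_3+\sum z_i^{2g+1}$). A smaller point: $\mu^2$, not $\mu^3$, is the exterior product; $\mu^3$ is already a genuine higher product carrying $W_3$.
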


{\it Equivalence between two LG models.} We have two natural LG models both mirror to the curve $M.$ The first one
is a stack $V//K$ together with a function $W$ from equation \eqref{potential}. Another one is a crepant resolution $\psi:X\to \bar{X} = V/K$ given by the $K\text{-}$Hilbert scheme
\cite{CR}, together with pullback of $W.$ In both cases the only singular fiber is over zero. Denote by $H\subset X$ be the preimage of $\bar{H} = W^{-1}(0)/K
\subset \bar{X}.$ We can describe the surface $H$ very explicitly (Section \ref{LG_model}). By the famous Mckay correspondence for derived categories \cite{BKR}, we have an equivalence $D^b_K (V )
\cong D^b(X).$ We use an analogous result for categories of singularities
\cite{BP, QV}: $D_{sg,K}(W^{-1}(0))\cong D_{sg}(H).$
This is a generalization of \cite{Se}, section 13.

In Appendix we prove one necessary technical result from Maurer-Cartan theory for pro-nilpotent DG Lie algebras.

\noindent{\bf The sign convention.} We will treat an
$A_{\infty}\text{-}$algebra as a $\Z\text{-}$(or
$(\Z/2)\text{-}$)graded vector space equipped with a sequence of
maps $\mu^d:A^{\otimes d}\to A$ of degree $2-d$ (resp. of parity
$d$) such that the maps $m_d:A^{\otimes d}\to A,$ where
\begin{equation}m_d(a_d,\dots,a_1)=(-1)^{|a_1|+2|a_2|+\dots+d|a_d|}\mu^d(a_d,\dots,a_1),\end{equation}
define an $A_{\infty}\text{-}$structure in standard sign convention.

\noindent{\bf Acknowledgements.} I am grateful to D. Kaledin, A.
Kuznetsov, S. Nemirovski, D. Orlov and P. Seidel for their help
and useful discussions.

\section{Maurer-Cartan theory for pro-nilpotent DG Lie algebras}
\label{MC_general}

Let $\mg$ be some DG Lie algebra over $\C.$ Recall Maurer-Cartan (MC) equation for $\mg:$
\begin{equation}\label{MC}\partial \alpha+\frac12 [\alpha,\alpha]=0,\quad \alpha\in\mg^1.\end{equation}
An element $\alpha\in
\mg^1$ is called Maurer-Cartan (MC) element if it satisfies
MC equation. For each $\gamma\in\mg^0$ we have affine vector field on $\mg^1,$ $\alpha\mapsto -\partial \gamma+ [\gamma,\alpha].$ This defines a morphism of Lie algebras from $\mg^0$ to the Lie
algebra of affine vector fields on $\mg^1.$ It
is easy to check that all vector fields in the image are tangent
to the subscheme of solutions of \eqref{MC}. Under some natural assumptions on $\mg$ (see below),
there is a group $G^0$ (which is exponent of $\mg^0$) acting on the set
of Maurer-Cartan elements.

We will need to deal with $L_{\infty}\text{-}$morphisms between DG
Lie algebras. An $L_{\infty}$-morphisn $\Phi:\mg\to \mh$ is given by a
sequence of maps $\Phi^k:\mg^{\otimes k}\to \mh.$ These maps must be anti-symmetric (in super sense) and satisfy natural compatibility equations (\cite{LM}, Definition 5.2).

More precisely, for a permutation $\sigma\in S_n,$ and graded variables $x_1,\dots,x_n,$ define the {\it Koszul sign} by the equality
$$x_1\wedge\dots\wedge x_n=\epsilon(\sigma;x_1,\dots,x_n)\cdot x_{\sigma(1)}\wedge\dots\wedge x_{\sigma(n)}$$
in the free graded commutative algebra $\Lambda(x_1,\dots,x_n).$ Further,
put $\chi(\sigma)=\chi(\sigma;x_1,\dots,x_n):=\sgn(\sigma)\cdot\epsilon(\sigma;x_1,\dots,x_n).$ Then the maps $\Phi^k$ must satisfy the equations $$\Phi^k(\xi_1,\dots,\xi_k)=\chi(\sigma)\Phi^k(\xi_{\sigma(1)},\dots,\xi_{\sigma(k)})$$
for homogeneous $\xi_1,\dots,\xi_k\in\mg,$ $k\geq 1.$
Further, the following relations are required to hold:
\begin{multline*}\partial\Phi^n(\xi_1,\dots,\xi_n)+\frac{(-1)^n}{(n-1)!}\sum\limits_{\sigma\in S_n}\chi(\sigma)
\Phi^n(\partial \xi_{\sigma(1)},\xi_{\sigma(2)},\dots,\xi_{\sigma(n)})\\
-\frac{1}{2!(n-2)!}\sum\limits_{\sigma\in S_n}\chi(\sigma)
\Phi^n([\xi_{\sigma(1)},\xi_{\sigma(2)}],\xi_{\sigma(3)},\dots,\xi_{\sigma(n)})\\
+\sum\limits_{s+t=n}\frac{1}{s!l!}\sum\limits_{\tau\in S_n}\chi(\tau)(-1)^{s-1}(-1)^{(t-1)(\sum\limits_{p=1}^s)|\xi_{\tau(p)}|}
[\Phi^s(\xi_{\tau(1)},\dots,\xi_{\tau(s)}),\Phi^t(\xi_{\tau(s+1)},\dots,\xi_{\tau(n)})]\\=0,
\end{multline*}
where again $\xi_1,\dots,\xi_n$ are homogeneous elements of $\mg,$ $n\geq 1.$

In particular, $\Phi^1$ is a morphism of
complexes, and $H^{\cdot}(\Phi^1):H^{\cdot}(\mg)\to H^{\cdot}(\mh)$ is a morphism of graded Lie algebras.

Such $\Phi$ is called a quasi-isomorphism if $\Phi^1$ is a
quasi-isomorphism. We will need the following statement.

\begin{lemma}\label{quasi-inverse} Let $\mg$ be a graded Lie algebra considered as a DG Lie algebra
with zero differential. Let $\mh$ be a DG Lie algebra, and
$\Psi:\mg\to \mh$ an $L_{\infty}\text{-}$quasi-isomorphism. Take
some morphism of complexes $\Phi^1:\mh\to \mg$ together with a
homogeneous map $H:\mh\to \mh$ of degree $-1,$ such that

\begin{equation}\Phi^1\Psi^1=\id,\quad \Psi^1\Phi^1-\id=\partial H+H\partial.\end{equation}
Then $\Phi^1$ can be extended to an $L_{\infty}\text{-}$morphism
$\Phi:\mh\to \mg,$ so that the higher order terms $\Phi^k$ are
given by a universal formulae, depending only on $\Psi,$ $\Phi^1$
and $H.$

Moreover, one can choose $\Phi$ in such a way that the composition
$\Phi\circ \Psi$ equals to the identity
$L_{\infty}\text{-}$morphism.
\end{lemma}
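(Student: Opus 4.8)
The plan is to deduce this from the Homological Perturbation Lemma (HPL), applied in the setting of $L_\infty$-morphisms. Recall that $\Psi\colon\mg\to\mh$ is encoded by a coalgebra map $\wt\Psi\colon C(\mg)\to C(\mh)$ between the (cofree, cocommutative) coalgebras on the shifted complexes, compatible with the codifferentials $D_\mg$ (which, since $\mg$ has zero differential, comes only from the bracket) and $D_\mh$. The data $(\Phi^1,H)$ together with the identity $\Phi^1\Psi^1=\id$ and $\Psi^1\Phi^1-\id=\partial H+H\partial$ is exactly the input of the HPL: the complex $(\mh,\partial)$ together with $\Psi^1\colon(\mg,0)\to\mh$, $\Phi^1\colon\mh\to(\mg,0)$ and the homotopy $H$ form a contraction (a strong deformation retract, after the standard trick of modifying $H$ so that $H^2=0$, $H\Psi^1=0$, $\Phi^1 H=0$ — this is the first routine step). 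I would first pass to these side conditions, noting they do not disturb the hypotheses.

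The next step is to transfer the codifferential. Form the induced contraction on the level of cofree coalgebras: from a contraction of complexes $(\mh,\partial)\rightleftarrows(\mg,0)$ one obtains a contraction $C(\mh)\rightleftarrows C(\mg)$ (with zero codifferential on the $C(\mg)$ side, since cofree on a complex with zero differential and we will see the bracket part is what gets perturbed away — more precisely $C(\mg)$ already carries its own codifferential $D_\mg$ coming from the Lie bracket, and the point is that this survives). Then view the full codifferential $D_\mh$ on $C(\mh)$ as a perturbation of the one induced by $\partial$ alone. Applying the HPL to this perturbation produces a new codifferential on $C(\mg)$, which one checks coincides with $D_\mg$ — here one uses crucially that $\Psi$ is an $L_\infty$-\emph{morphism}, i.e. $\wt\Psi$ already intertwines $D_\mg$ and $D_\mh$, so the transferred structure on the $\mg$-side is forced to be the pre-existing one rather than some deformation of it — and a new coalgebra map $C(\mh)\to C(\mg)$ which intertwines $D_\mh$ with $D_\mg$; this is the desired $\Phi\colon\mh\to\mg$. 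Its linear term is $\Phi^1$ by construction, and the higher terms $\Phi^k$ are given by the usual sum-over-trees formula built from $\Psi$, $\Phi^1$ and $H$, hence universal. This establishes the first assertion.

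For the last sentence I would argue as follows. The composite $\Phi\circ\Psi\colon\mg\to\mg$ is an $L_\infty$-endomorphism of $(\mg,0)$ whose linear term is $\Phi^1\Psi^1=\id$; such a morphism is automatically an $L_\infty$-isomorphism. One can then either invoke the standard fact that an $L_\infty$-isomorphism with identity linear part can be straightened to the identity by re-choosing the higher homotopies, and pull this straightening back through $\Psi$ to adjust $\Phi$; or, more cleanly, one arranges the side conditions above to include compatibility with $\Psi$ (e.g. $H\Psi^1=0$) so that in the tree formula every term of $\Phi\circ\Psi$ beyond the linear one contains a factor $H\Psi^1=0$ and thus vanishes identically. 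I would take the latter route since it gives the conclusion with no further work. The only mildly delicate point — and the place I expect to have to be careful — is bookkeeping with the cocommutative (as opposed to coassociative) cofree coalgebras and the Koszul signs in the super/graded setting: the HPL for $A_\infty$ is textbook, but its $L_\infty$ version requires tracking (anti)symmetrization, and one must make sure the transferred maps really are anti-symmetric in the super sense and that the side conditions can be imposed simultaneously. Everything else is formal.
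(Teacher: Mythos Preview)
Your treatment of the first assertion via the Homological Perturbation Lemma on the cofree cocommutative coalgebras is correct and is exactly what the paper invokes (by citing \cite{Se}, Lemma 2.1).

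For the ``moreover'' clause, however, the paper takes a cleaner route than your preferred one. Having produced some $\Phi$ extending $\Phi^1$, the paper simply sets $\Phi' = (\Phi\circ\Psi)^{-1}\circ\Phi$; since $\Phi\circ\Psi$ is an $L_\infty$-automorphism of $\mg$ with linear part $\id$, its inverse exists and is given by universal formulae, so $\Phi'$ inherits universality and satisfies $\Phi'\circ\Psi=\id$ tautologically. This is essentially your first option, made precise in one line.

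Your preferred second option (imposing side conditions so that every higher term of $\Phi\circ\Psi$ contains a factor $H\Psi^1=0$) has a gap. Already at order two,
\[
(\Phi\circ\Psi)^2(x,y)=\Phi^2(\Psi^1 x,\Psi^1 y)+\Phi^1(\Psi^2(x,y)),
\]
and while the first summand can be killed by $H\Psi^1=0$ (the tree formula for $\Phi^2$ feeds each input through $H$), the second summand $\Phi^1\Psi^2$ contains no $H$ at all and there is no side condition forcing it to vanish. More conceptually, the HPL hands you back a contraction with $p'i'=\id$, but the perturbed inclusion $i'=\sum (h\delta)^n i$ is in general \emph{not} equal to $\wt\Psi$; the hypothesis that $\Psi$ is an $L_\infty$-morphism pins down the transferred structure on $\mg$ but does not force $i'=\wt\Psi$. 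So the side-condition route does not close without additional argument, whereas the paper's post-composition trick sidesteps the issue entirely.
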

\begin{proof} For the proof of the first statement, see \cite{Se}, Lemma 3.1. Further, for the
constructed $\Phi,$ we have that the composition $\Phi\circ\Psi$
is an $L_{\infty}\text{-}$automorphism of $\mh.$ Define
$\Phi'=(\Phi\circ\Psi)^{-1}\Phi.$ Then $\Phi'$ satisfies the
required property, and the higher order terms $\Phi'^k$ are again
given by a universal formulae, depending only on $\Psi,$ $\Phi^1$
and $H.$
\end{proof}

In order to be able to exponentiate the gauge vector fields on
$\mg^1,$ we will deal with {\it pro-nilpotent} DG Lie algebras.

\begin{defi} A DG Lie algebra $\mg$ is called pro-nilpotent if it
is equipped with a complete decreasing filtration
$\mg=L_1\mg\supset L_2\mg\supset\dots,$ such that
\begin{equation}\partial(L_r\mg)\subset L_r\mg,\quad [L_r\mg,L_s\mg]\subset L_{r+s}\mg.\end{equation}
\end{defi}

If $\mg$ is pro-nilpotent, then Lie algebra $\mg^0$ is also such,
and hence we get a pro-nilpotent group $G^0.$ As a set, it equals to $\mg^0,$ and the product is given by the
Baker-Campbell-Hausdorff formula. The group $G^0$ then acts on MC
elements $\alpha\in \mg^1.$ Two MC elements are called equivalent if they lie in the same $G^0$-orbit.

\begin{defi} Let $\mg,$ $\mh$ be pro-nilpotent DG Lie algebras. An $L_{\infty}\text{-}$morphism
$\Phi:\mg\to \mh$ is called filtered if
\begin{equation}\Phi^k(L_{r_1}\mg\otimes\dots\otimes L_{r_k}\mg)\subset L_{r_1+\dots+r_k}\mh.\end{equation}
\end{defi}

\begin{defi} A filtered $L_{\infty}\text{-}$morphism $\Phi:\mg\to\mh$ of pro-nilpotent DG Lie algebras
is called a filtered $L_{\infty}\text{-}$quasi-isomorphism if the
induced morphisms of complexes $L_r\mg/L_{r+1}\mg\to
L_r\mh/L_{r+1}\mh$ are quasi-isomorphisms.\end{defi}

\begin{remark}In Lemma \ref{quasi-inverse} we can require $\mg,$ $\mh$ to be pro-nilpotent, $\Psi$ to be a filtered $L_{\infty}\text{-}$quasi-isomorphism,
and $\Phi^1,$ $H$ to be compatible with filtrations. Then the
constructed $L_{\infty}\text{-}$morphism $\Phi$ is also
filtered.\end{remark}

If $\Phi:\mg\to \mh$ is a filtered $L_{\infty}\text{-}$morphism of
pro-nilpotent DG Lie algebras, then we have an induced map on
Maurer-Cartan elements

\begin{equation}\alpha\mapsto \Phi_*(\alpha):=\sum\limits_{k\geq 1} (-1)^{\frac{k(k-1)}{2}}\frac1{k!}\Phi^k(\alpha,\dots,\alpha).\end{equation}
This map preserves equivalence relation (see Appendix). The
following statement is a generalization of the corresponding
result in \cite{Ko}.

\begin{lemma}\label{bijection} Let $\Phi:\mg\to \mh$ be a filtered
$L_{\infty}\text{-}$quasi-isomorphism of filtered DG Lie algebras.
Then the induced map on equivalence classes of MC elements is a
bijection.
\end{lemma}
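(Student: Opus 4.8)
The plan is to reduce the bijectivity statement to the filtered-quotient setting, where each $L_r\mg/L_{r+1}\mg \to L_r\mh/L_{r+1}\mh$ is an honest quasi-isomorphism of complexes, and then bootstrap from the associated graded pieces to the whole. First I would establish surjectivity. Given a Maurer--Cartan element $\beta\in\mh^1$, I want to produce $\alpha\in\mg^1$ with $\sum_k \frac{1}{k!}\Phi^k(\alpha,\dots,\alpha)$ equivalent to $\beta$. I would construct $\alpha$ by successive approximation modulo the filtration: using that $\Phi^1$ induces isomorphisms on cohomology of the graded quotients, lift the leading term of $\beta$ in $L_1/L_2$ to a class in $H^1(L_1\mg/L_2\mg)$, pick a representative, and correct the discrepancy between $\Phi_*(\alpha)$ and $\beta$ at the next filtration level by a gauge transformation in $\mh^0$ combined with a further correction to $\alpha$ in $L_2\mg^1$. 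The obstruction to each correction step lives in $H^2$ of a graded quotient, but here it is automatically killed: since $\Phi$ is a quasi-isomorphism on each graded piece, any cocycle in $L_r\mh/L_{r+1}\mh$ representing the error is cohomologous (after a gauge move) to something in the image of $\Phi^1$, and the nonlinear terms $\Phi^k$ with $k\geq 2$ applied to elements of $\alpha$ only contribute in strictly higher filtration. Completeness of the filtration then lets me pass to the limit and obtain a genuine $\alpha$.

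For injectivity, suppose $\alpha_0,\alpha_1\in\mg^1$ are MC elements whose images under $\Phi_*$ are gauge equivalent in $\mh$. I would show $\alpha_0\sim\alpha_1$ in $\mg$ by the same filtration induction: assuming $\alpha_0\equiv\alpha_1 \pmod{L_r\mg}$ after a gauge transformation in $\mg^0$, one examines the difference in $L_r\mg^1/L_{r+1}\mg^1$, which is a cocycle; the hypothesis that the images are equivalent, together with the quasi-isomorphism property on the graded quotient $L_r/L_{r+1}$, forces this cocycle to be a coboundary plus a term correctable by an element of $L_r\mg^0$, improving the congruence to level $r+1$. Composing all these gauge transformations (legitimate because $\mg^0$ is pro-nilpotent, so the infinite Baker--Campbell--Hausdorff product converges) yields a single gauge equivalence between $\alpha_0$ and $\alpha_1$.

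The technical heart of both halves is keeping track of how the higher Taylor coefficients $\Phi^k$ interact with the filtration: the key point, which makes the induction close, is that $\Phi$ is \emph{filtered}, so $\Phi^k(L_{r_1}\mg\otimes\cdots\otimes L_{r_k}\mg)\subset L_{r_1+\cdots+r_k}\mh$, and hence modulo $L_{r+1}$ only the linear term $\Phi^1$ (plus already-known lower-order data) is visible when correcting at level $r$. This is exactly the mechanism that reduces a statement about a genuinely nonlinear map on MC sets to a sequence of \emph{linear} problems about the quasi-isomorphisms $\Phi^1\colon L_r\mg/L_{r+1}\mg\to L_r\mh/L_{r+1}\mh$. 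I expect the main obstacle to be the careful verification that the gauge corrections at successive levels can be organized into a convergent product and that the resulting limit MC element (resp. gauge transformation) genuinely solves the problem rather than merely solving it to all finite orders — this is where completeness of the filtration is essential, and where one must be slightly careful that the gauge action is itself continuous for the filtration topology. A clean way to package this, which I would adopt, is to first prove the statement when $\mg$ and $\mh$ are nilpotent (finitely many nonzero graded pieces) by a finite induction, and then take an inverse limit over the quotients $\mg/L_r\mg$, $\mh/L_r\mh$, using that MC elements and their equivalence classes commute with the relevant limits; compare the treatment in \cite{Ko} and the Appendix.
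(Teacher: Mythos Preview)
Your outline is correct in spirit and matches the paper's strategy of filtration-level induction with obstruction classes in $H^1$ and $H^2$ of the graded quotients. The paper formalizes this via an explicit obstruction theory (Proposition~\ref{obstr_theory}): a map $o_2\colon MC(\mg/\mh)\to H^2(\mh)$ governing liftability of MC solutions across a central extension, and maps $o_1^X$ governing liftability of gauge equivalences; your ``obstruction lives in $H^2$'' and ``cocycle is a coboundary'' remarks are exactly these.

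The one place where the paper does something you do not is the injectivity argument in the genuine $L_\infty$ case. Your sketch implicitly tracks a gauge element $Y\in\mh^0$ with $\exp(Y)\cdot\Phi_*(\alpha_0)=\Phi_*(\alpha_1)$ and tries to lift it to $\mg^0$ level by level. This works cleanly when $\Phi$ is a strict DG morphism --- and the paper indeed proves that case first (Proposition~\ref{bij_special}) --- because then $\Phi(\exp(X)\cdot\alpha)=\exp(\Phi(X))\cdot\Phi(\alpha)$ and one can compare $Y$ with $\Phi(X)$ directly at each stage. For a general filtered $L_\infty$ morphism there is no such formula, and the interaction of the higher $\Phi^k$ with the exponential gauge action is awkward to control inductively; your step ``the hypothesis that the images are equivalent \dots\ forces this cocycle to be a coboundary'' is where this bites.

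The paper's device is to replace ``gauge equivalent'' by ``homotopic via an MC element in $\mg\hat\otimes\Omega_1$'' (polynomial forms on the affine line), and to check these relations coincide. This reformulation is manifestly functorial under filtered $L_\infty$ morphisms, since $\Phi$ extends to $\mg\hat\otimes\Omega_1\to\mh\hat\otimes\Omega_1$. Injectivity then becomes: given a homotopy $B\in MC((\mh/L_{r+1}\mh)\otimes\Omega_1)$ between $\Phi_*(\alpha)$ and $\Phi_*(\beta)$, lift it to a homotopy in $(\mg/L_{r+1}\mg)\otimes\Omega_1$ with prescribed endpoints --- an obstruction-theoretic lifting problem of the same shape as surjectivity (Proposition~\ref{obstr_modified}). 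Your argument goes through once you adopt this reformulation; without it, the $L_\infty$ injectivity step is not fully justified.
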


This lemma is proved in Appendix by using obstruction theory,
similar to \cite{GM} (or \cite{ELO} for
$A_{\infty}\text{-}$algebras).

\section{$A_{\infty}$-structures and formal polyvector fields}
\label{intro_to_formality}

Now we define some necessary notions to formulate a version of Kontsevich formality theorem
\cite{Ko}. Let $V$ be a
finite-dimensional $\C\text{-}$vector space. The graded
Lie algebra of formal polyvector fields on $V$ is the following:
\begin{equation}\C[[V^{\vee}]]\otimes \Lambda(V)=\prod\limits_{i,j}\Sym^i(V^{\vee})\otimes \Lambda^j(V).\end{equation}

We assign to the summand $\C[[V^{\vee}]]\otimes \Lambda^j(V)$
the grading $j-1.$ The Lie bracket is the Schouten one:

\begin{multline}[f\xi_{i_1}\wedge\dots\wedge\xi_{i_k},g\xi_{j_1}\wedge\dots\wedge\xi_{j_l}]=\\
\sum\limits_{q=1}^k(-1)^{k-q}(f\partial_{i_q}g)\xi_{i_1}\wedge\dots\wedge\widehat{\xi_{i_q}}\wedge\dots\wedge\xi_{i_k}
\wedge\xi_{j_1}\wedge\dots\wedge\xi_{j_l}+\\
\sum\limits_{p=1}^l(-1)^{l-p-1+(k-1)(l-1)}(g\partial_{j_p}f)\xi_{j_1}\wedge\dots\wedge
\widehat{\xi_{j_p}}\wedge\dots\wedge\xi_{j_l}
\wedge\xi_{i_1}\wedge\dots\wedge\xi_{i_k}.
\end{multline}

A formal bivector field $\alpha\in \C[[V^{\vee}]]\otimes \Lambda^2(V)$ is MC element
iff $\alpha$ defines a formal Poisson structure. The
elements $\gamma\in \C[[V^{\vee}]]\otimes V,$ which are formal
vector fields, act on Poisson brackets by their Lie derivatives.
If the value of $\gamma$ at the origin vanishes, then it can be
exponentiated to a formal diffeomorphism of $V.$ The corresponding
action on Poisson brackets is just the pushforward action by
formal diffeomorphisms.

Now let $A$ be a graded algebra over $\C.$ The Hochshild cochain
complex $CC^{\cdot}(A,A)$ of $A$ is defined as follows. As a graded vector space, it consists of graded multilinear maps:
\begin{equation}CC^d(A,A)=\prod\limits_{i+j-1=d}\Hom^j(A^{\otimes
i},A).\end{equation} The differential on the Hochshild complex is given by the formula
\begin{multline} (\partial\phi)^j (a_j,\dots,a_1) =
\sum\limits_{k}(-1)^{|\phi|+|a_1|+\dots+|a_k|+k}\phi^{j-1}(a_j,\dots,
a_{k+1}a_k,\dots,a_1)+\\
(-1)^{|\phi|+|a_1|+\dots+|a_{j-1}|+j}a_j\phi^{j-1}(a_{j-1},
\dots,a_1)+\\(-1)^{(|\phi|-1)(|a_1|-1)+1}\phi^{j-1}(a_j,\dots,a_2)a_1.
\end{multline}

There is a naturaL Gerstenhaber bracket on the Hochshild complex which makes it into a DG Lie algebra:
\begin{multline}[\phi,\psi]^j(a_j,\dots,a_1)=\\\sum\limits_{k,l}(-1)^{|\psi|(|a_1|+\dots+|a_k|-k)}
\phi^{j-l+1}(a_j,\dots,a_{k+l+1},\psi^l(a_{k+l},\dots,a_{k+1}),a_k,\dots,a_1)-\\
\sum\limits_{k,l}(-1)^{|\phi||\psi|+|\phi|(|a_1|+\dots+|a_k|-k)}
\psi^{j-l+1}(a_j,\dots,a_{k+l+1},\phi^l(a_{k+l},\dots,a_{k+1}),a_k,\dots,a_1).
\end{multline}

Our grading on the Hochshild complex is shifted by $1$ from the usual one (otherwise the Gerstenhaber bracket would have degree $-1$).

We would like to illustrate the Maurer-Cartan theory for pro-nilpotent DG Lie algebras by describing minimal $A_{\infty}$-structures on $A$ up to a strict homotopy.
Consider the DG Lie subalgebra $\mg_A\subset CC^{\cdot}(A,A)$ with
\begin{equation}\mg_A^d=\prod\limits_{\substack{i+j-1=d,\\i\geq d+2}}\Hom^j(A^{\otimes
i},A).\end{equation}
We have that $\mg_A$ is pro-nilpotent, with filtration
\begin{equation}L_r\mg_A^d=\prod\limits_{\substack{i+j-1=d,\\i\geq d+1+r}}\Hom^j(A^{\otimes
i},A),\quad r\geq 1.\end{equation}

It is well known (and is easy to see) that $A_{\infty}$-structures on the graded algebra $A$ correspond to MC elements
$\alpha\in CC^1(A,A).$ Namely, each $\alpha\in CC^1(A,A)$ is given by maps
$\alpha^j:A^{\otimes j}\to A$ of degree $2-j,$ for each $j\geq 3.$ Put
\begin{equation} \begin{cases}& \mu^j=\alpha^j  \text{ for }j\geq 3;\\
& \mu^2(a_2,a_1)=(-1)^{|a_1|}a_2a_1;\\
& \mu_1=0.
\end{cases}
\end{equation}
Then $\mu^j$ define an $A_{\infty}\text{-}$structure if and only if $\alpha$ is Maurer-Cartan element.

\begin{remark} As we have already mentioned in Introduction, our sign convention differs from the standard one.
To obtain an $A_{\infty}\text{-}$structure in standard sign
convention, one should put
\begin{equation}m_j(a_j,\dots,a_1)=(-1)^{|a_1|+2|a_2|+\dots+j|a_j|}\mu^j(a_j,\dots,a_1).\end{equation}
\end{remark}

The exponentiated action of $\exp(\mg_A^0)$ on MC elements ($A_{\infty}$-structures) is the following.
Take some $\gamma\in \mg_A^0.$ Take homogeneous maps $\phi^r:A^{\otimes r}\to A,$ $\deg(\phi^r)=1-r,$ $r\geq 1,$ given by the formulas:
\begin{equation}\label{exp_gamma}\begin{cases}&
\phi^1=\id;\\
&
\phi^2=\gamma^2;\\
&
\phi^3=\gamma^3+\frac12 \gamma^2(\gamma^2\otimes\id)+\frac12 \gamma^2(\id\otimes\gamma^2);\\
&
\phi^4=\gamma^4+\frac12 \gamma^2(\gamma^3\otimes\id)+\frac12 \gamma^2(\id\otimes\gamma^3)+
\frac12 \gamma^3(\gamma^2\otimes\id\otimes\id)+\frac12 \gamma^3(\id\otimes\gamma^2\otimes\id)+\\
& \frac12 \gamma^3(\id\otimes\id\otimes\gamma^2)+
\frac13 \gamma^2(\gamma^2\otimes\gamma^2);\\
& \dots\end{cases}
\end{equation}

In general, $\phi^j$ is the sum over all ways of
concatenating the components of $\gamma$ to get a
$j\text{-}$linear map. The associated term is taken with the
coefficient $\frac s{r!},$ where $r$ is the number of components of $\gamma,$ and $s$ is the number of ways of
ordering the components, compatibly with their appearance in
concatenation. If two MC elements $\alpha$ and $\tilde{\alpha}$ lie in the same
orbit of the action of $\mg_A^0$, so that $\tilde{\alpha}=\exp(\gamma)(\alpha),$ then
the
corresponding $A_{\infty}\text{-}$structures are strictly homotopic,
and $\phi$ is an $A_{\infty}\text{-}$isomorphism.

Now let again $V$ be a finite-dimensional vector space, and take
$A=\Lambda(V).$ By Hochshild-Kostant-Rosenberg Theorem (see \cite{HKR}), we have
$HH^{\cdot}(A,A)\cong \C[[V^{\vee}]]\otimes \Lambda(V).$ This
isomorphism is induced by Hochshild-Kostant-Rosenberg map
\begin{equation}\Phi^1:CC^{\cdot}(A,A)\to \C[[V^{\vee}]]\otimes \Lambda(V),\end{equation}
given by the formula
\begin{equation} \Phi^1(\beta)(\xi)=\sum\limits_{j\geq
1}\beta^j(\xi,\dots\xi).
\end{equation}
Here we consider polyvector fields as formal power series with values in
$\Lambda(V).$

\begin{theo}\label{formality}(\cite{Ko}) The map $\Phi^1$ is the first term of some
$L_{\infty}\text{-}$morphism $\Phi,$ which can be taken to be
$GL(V)\text{-}$equivariant.
\end{theo}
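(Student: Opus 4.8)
The plan is to deduce this from Kontsevich's formality theorem for the Hochschild cochains of a smooth affine variety, applied here to the formal neighborhood of the origin in $V$, together with the Koszul/HKR identification of $CC^{\cdot}(\Lambda(V),\Lambda(V))$ with polyvector fields. First I would recall that $\Lambda(V)$ is Koszul dual to $\Symm(V^{\vee})$, so that the Hochschild cohomology of $\Lambda(V)$ agrees (as a Gerstenhaber algebra, after the degree shift used above) with the Hochschild cohomology of the algebra of functions on the formal disc $\Spec \C[[V^{\vee}]]$, which by the classical Hochschild--Kostant--Rosenberg theorem is the algebra of formal polyvector fields $\C[[V^{\vee}]]\otimes\Lambda(V)$ with the Schouten bracket. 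The map $\Phi^1$ written above is exactly the HKR map in this guise. Kontsevich's formality theorem provides an $L_{\infty}\text{-}$quasi-isomorphism from polyvector fields to Hochschild cochains whose linear term is HKR; what I need is an $L_{\infty}\text{-}$morphism in the \emph{opposite} direction whose linear term is the map $\Phi^1$ displayed above, and which can moreover be chosen $GL(V)\text{-}$equivariant.

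The key steps, in order, are: (1) identify $CC^{\cdot}(\Lambda(V),\Lambda(V))$ with a complex computing polyvector fields and exhibit an explicit contraction (homotopy $H$, projection $\Phi^1$, inclusion of the Koszul-type representatives $\Psi^1$) realizing $\C[[V^{\vee}]]\otimes\Lambda(V)$ as the cohomology, with $\Phi^1\Psi^1=\id$ and $\Psi^1\Phi^1-\id=\partial H+H\partial$; (2) invoke Kontsevich's formality to get an $L_{\infty}\text{-}$quasi-isomorphism $\Psi$ (polyvector fields $\to$ Hochschild cochains) extending $\Psi^1$; (3) apply Lemma \ref{quasi-inverse} with $\mg=\C[[V^{\vee}]]\otimes\Lambda(V)$ (zero differential) and $\mh=CC^{\cdot}(\Lambda(V),\Lambda(V))$ to produce the desired $L_{\infty}\text{-}$morphism $\Phi:\mh\to\mg$ with $\Phi^1$ the HKR projection and with higher terms given by universal formulas in $\Psi$, $\Phi^1$, $H$; (4) upgrade to $GL(V)\text{-}$equivariance by averaging: since $GL(V)$ acts on everything in sight and one may choose Kontsevich's $\Psi$, the homotopy $H$, and $\Phi^1$ equivariantly (the group is reductive, and in characteristic zero one can project onto the invariant part of each graded piece, which is finite-dimensional after fixing polynomial degree), the universal formulas then produce an equivariant $\Phi$.

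The main obstacle is step (4) together with the bookkeeping in step (1): one must arrange all the input data of Lemma \ref{quasi-inverse} — the contraction data on the Hochschild complex and Kontsevich's $L_{\infty}\text{-}$map — to be simultaneously $GL(V)\text{-}$equivariant. Equivariance of $\Phi^1$ and of the natural Koszul section $\Psi^1$ is immediate from their explicit form; the subtle point is choosing the contracting homotopy $H$ equivariantly and knowing that Kontsevich's formality morphism can be taken $GL(V)\text{-}$equivariant. For the latter I would appeal to the fact that Kontsevich's construction via graph weights is $GL(V)\text{-}$natural (it uses no extra structure on $V$ beyond the vector space structure), so the resulting $\Psi$ is automatically equivariant; for $H$, one decomposes $CC^{\cdot}$ into finite-dimensional $GL(V)\text{-}$representations by polynomial weight and averages any chosen homotopy against the $GL(V)\text{-}$action restricted to a maximal compact, or equivalently splits off the invariants of each isotypic component. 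Once all the data are equivariant, the universality clause of Lemma \ref{quasi-inverse} guarantees that every $\Phi^k$ is built from equivariant maps by equivariant operations (composition, tensor, the Schouten and Gerstenhaber brackets, all $GL(V)\text{-}$maps), hence is equivariant, completing the proof.
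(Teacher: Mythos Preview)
Your proposal is correct and follows essentially the same route as the paper: the paper's argument (sketched in the text following the theorem and in Remark \ref{left_inverse}) is precisely to transport Kontsevich's explicit $L_\infty$-quasi-isomorphism to the exterior algebra setting by exchanging even and odd variables (your Koszul duality framing says the same thing), note that the resulting $\Psi$ is $GL(V)$-equivariant by construction, and then invoke Lemma \ref{quasi-inverse} together with reductiveness of $GL(V)$ to produce an equivariant $\Phi$ with first term $\Phi^1$. Your steps (1)--(4) spell out exactly these ingredients.
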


Theorem \ref{formality} is implied by Kontsevich formality Theorem \cite{Ko}
using Lemma \ref{quasi-inverse} and reductiveness of $GL(V),$ see
\cite{Se} and Remark \ref{left_inverse}.

\begin{remark}\label{left_inverse}In contrast to our situation, Kontsevich deals with the algebra of smooth functions
on smooth manifolds. He proves that for each smooth manifold $X$
the graded Lie algebra of polyvector fields $T_{poly}(X)$ is
quasi-isomorphic to the DG Lie algebra of polydifferential
operators $D_{poly}(X)$. In the case when $X$ is an open domain
$U$ in affine space $\R^d,$ he constructs an explicit
$L_{\infty}\text{-}$quasi-isomorphism. One can replace the smooth
functions by polynomials (or formal power series) over $\C,$ and
his construction works as well. Then one exchanges even an odd
variables, and obtains an $L_{\infty}\text{-}$quasi-isomorphism
\begin{equation}\Psi:\C[[V^{\vee}]]\otimes \Lambda(V)\to
CC^{\cdot}(A,A).
\end{equation}
This $\Psi$ is $GL(V)\text{-}$equivariant, and using Lemma
\ref{quasi-inverse} and reductiveness of $GL(V),$ one obtains the
required $\Phi:CC^{\cdot}(A,A)\to \C[[V^{\vee}]]\otimes
\Lambda(V)$ which can be taken to be left inverse to
$\Psi.$\end{remark}

\section{Classification lemma for polyvector fields}

Put $V=\C^3.$ Take the subgroup $G\subset SL(V)$ which consists of
diagonal matrices with $(2g+1)\text{-}$th roots of unity on the
diagonal. Clearly, $G\cong (\Z/(2g+1))^2.$ Define the
pro-nilpotent graded Lie algebra $\mg$ as follows:

\begin{equation}\mg^d \;\; = \!\!\!\!\! \prod_{\substack{2i+j-(4g-4)k = 3d+3 \\
k \geq 0, \, i \geq d+2}}\!\!\!\!\! (\Sym^iV^\vee \otimes
\Lambda^jV)^G\, \hbar^k.\end{equation}

The Lie bracket comes from Schouten bracket on polyvector fields,
and $L_r \mg^d$ is the part of the product which consists of terms
with $i\geq d+1+r.$

We can omit $\hbar^k$ but remember that
\begin{equation}\label{conditions} 2i+j-3d-3\geq 0,\quad \text{and}\quad
2i+j-3d-3\equiv 0\text{ mod }4g-4.\end{equation}

We would like to describe explicitly elements of $\mg^1$ and $\mg^0,$ and Maurer-Cartan equation.
Any element $\alpha\in \mg^1$ can be written as
$(\alpha^0,\alpha^2),$ where $\alpha^0\in \C[[V^{\vee}]],$ and
$\alpha^2\in \C[[V^{\vee}]]\otimes \Lambda^2 V.$ Both $\alpha^0$ and $\alpha^2$ must be $G$-invariant,
and the degrees of non-zero homogeneous components of $\alpha^0$ and $\alpha^2$ must fullfill the conditions \eqref{conditions}. In particular,
$\alpha^0\in F_3\C[[V^{\vee}]],$ and $\alpha^2\in F_{2g}\C[[V^{\vee}]]\otimes\Lambda^2 V.$
 Here $F_{\bullet}\C[[V^{\vee}]]$ is the complete decreasing
filtration, s.t.
\begin{equation}F_r\C[[V^{\vee}]]=\prod_{i\geq r}\Sym^i(V^{\vee}).\end{equation}

Similarly, any element
$\gamma\in \mg^0$ can be written as $(\gamma^1,\gamma^3),$ where
$\gamma^1\in F_{2g-1}\C[[V^{\vee}]]\otimes V,$ and $\gamma^3\in
F_{2g-2}\C[[V^{\vee}]]\otimes \Lambda^3 V.$ Again, both $\gamma^1$ and $\gamma^3$ must be $G$-invariant, and non-zero homogeneous components of $\gamma^1$
and $\gamma^3$ must satisfy \eqref{conditions}.

Maurer-Cartan equation for $\alpha=(\alpha^0,\alpha^2)$ splits
into the components: \begin{equation}\frac12
[\alpha^2,\alpha^2]=0,\quad [\alpha^0,\alpha^2]=0.\end{equation}

This means that

1) The bivector field $\alpha^2$ is Poisson (the first equation);

2) The Poisson vector
field associated to the function $\alpha^0$ is identically zero.
It will be convenient to reformulate this. Consider the complex $\C[[V^{\vee}]]\otimes \Lambda^{\cdot} (V)$ with differential being
contraction with $d\alpha^0$ (Koszul complex). Then the second equation means that
$\alpha^2$ is a cocycle in this complex.

The exponentiated adjoint action of $\gamma=(\gamma^1,0)\in \mg^0$
on the solutions of MC equation is the usual action by formal
diffeomorphisms. For $\gamma=(0,\gamma^3),$ this action is given
by the formula
\begin{equation}\label{act_3_forms}(\alpha^0,\alpha^2)\mapsto
(\alpha^0,\alpha^2+\iota_{d\alpha^0}\gamma^3).\end{equation}

Take the polynomial
\begin{equation}W=-z_1z_2z_3+z_1^{2g+1}+z_2^{2g+1}+z_3^{2g+1}\in
\C[V^{\vee}]^G,\end{equation}
which we have already mentioned in Introduction as a superpotential. Then $(W,0)\in \mg^1$ is a solution
of MC equation (as any other $\alpha\in\mg^1$ of type
$(\alpha^0,0)$). Our main technical result in this section is the following.

\begin{lemma}\label{equiv_to_(W,0)} Let $\alpha=(\alpha^0,\alpha^2)\in \mg^1$ be an MC element. Suppose that
\begin{equation}\alpha^0\equiv\begin{cases}W\text{ mod }F_{2g+2}\C[[V^{\vee}]] &
\text{ if }g\not\equiv 1\text{ mod }3\\
W+\lambda (z_1z_2z_3)^{\frac{2g+1}3},\text{ where }\lambda\in \C &
\text{ if }g\equiv 1\text{ mod }3.\end{cases}\end{equation}
Then $\alpha$ is
equivalent to $(W,0).$
\end{lemma}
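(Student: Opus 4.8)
The plan is to reduce the statement to two successive normalization steps: first kill $\alpha^2$, then normalize $\alpha^0$ to $W$ exactly. I would argue as follows.

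\textbf{Step 1: killing the bivector $\alpha^2$.} By the Maurer-Cartan equation, $\alpha^2$ is a cocycle in the Koszul-type complex $\C[[V^\vee]]\otimes\Lambda(V)$ with differential $\iota_{d\alpha^0}$, and I want to hit it with the gauge action \eqref{act_3_forms} of some $\gamma=(0,\gamma^3)$, which changes $\alpha^2$ by $\iota_{d\alpha^0}\gamma^3$ — i.e. by a coboundary. So the obstruction to removing $\alpha^2$ entirely lives in the cohomology of $(\C[[V^\vee]]\otimes\Lambda(V),\iota_{d\alpha^0})$ in the relevant degree. The key computation is that, because $\alpha^0 = W + (\text{higher order})$ and $W=-z_1z_2z_3+z_1^{2g+1}+z_2^{2g+1}+z_3^{2g+1}$ has an isolated singularity at the origin, the partials $\partial_1\alpha^0,\partial_2\alpha^0,\partial_3\alpha^0$ form a regular sequence in $\C[[V^\vee]]$; hence the Koszul complex is exact except in top degree, where the cohomology is the Milnor/Tjurina algebra $\C[[V^\vee]]/(\partial_1\alpha^0,\partial_2\alpha^0,\partial_3\alpha^0)$. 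Here $\Lambda^2 V$ is one below top degree $\Lambda^3 V$, so the relevant Koszul cohomology vanishes, and $\alpha^2 = \iota_{d\alpha^0}\gamma^3$ for some $\gamma^3$. One must check $\gamma^3$ can be chosen in $\mg^0$, i.e. respecting $G$-invariance and the grading/weight conditions \eqref{conditions}: $G$-invariance is automatic since $\iota_{d\alpha^0}$ is $G$-equivariant and one can average, and the weight bookkeeping works because the congruence $2i+j-3d-3\equiv 0 \bmod 4g-4$ is preserved under contraction with $d\alpha^0$. After this gauge transformation we have replaced $\alpha$ by an equivalent element of the form $(\beta,0)$ with $\beta\equiv\alpha^0$ modulo the image of the earlier normalizations — in particular $\beta$ still satisfies the congruence hypothesis on $\alpha^0$ in the statement.

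\textbf{Step 2: normalizing $\alpha^0$ by formal diffeomorphisms (finite determinacy).} Now I am reduced to showing that any $\beta\in F_3\C[[V^\vee]]^G$ with $2i-3d-3\equiv 0\bmod 4g-4$ on its components and with $\beta\equiv W$ (or $\beta\equiv W+\lambda(z_1z_2z_3)^{(2g+1)/3}$ when $g\equiv 1\bmod 3$) modulo $F_{2g+2}$ is carried to $W$ by a formal change of variables — which is exactly the gauge action of some $\gamma=(\gamma^1,0)\in\mg^0$. This is a finite-determinacy statement for the singularity $W$. The standard tool is the Mather–Yau / homotopy method: since $W$ has an isolated singularity and $\mu(W)=\dim\C[[V^\vee]]/(\partial_1W,\partial_2W,\partial_3W)$ is finite, $W$ is $k$-determined for $k=\mu+1$ or so; more precisely, $F_{m}\C[[V^\vee]]\subset \mathfrak{m}^2\cdot(\partial_1W,\partial_2W,\partial_3W)$ for $m$ large, so adding anything in $F_m$ can be absorbed by a diffeomorphism tangent to the identity to second order. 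The subtlety is that (a) I need the diffeomorphism to be $G$-equivariant, so I should run the homotopy method $G$-equivariantly — decompose everything into $G$-isotypic components and note $d/(\partial_1W,\ldots)$-style arguments respect the grading since $W$ is $G$-invariant — and (b) I need the perturbation to start only in degree $2g+2$, not degree $3$; this is where the weight condition \eqref{conditions} is essential, because it forces the difference $\beta-W$ (minus the possible $\lambda$-term) to have no components in degrees $3,\ldots,2g+1$ other than those already in $W$, so that $\beta-W$ automatically lies in $F_{2g+2}$ and, for $2g+2$ large relative to the Milnor number (true once $g\ge 3$, modulo checking the exceptional $g\equiv 1\bmod 3$ case where the extra invariant $(z_1z_2z_3)^{(2g+1)/3}$ of degree $2g+1\equiv 0$ appears and must be handled separately — presumably it can be absorbed or renamed), we are in the range of finite determinacy. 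Concretely I would set $W_t = W + t(\beta - W)$, solve $\dot v_t = -\,$(vector field with $\iota_{dW_t}(\cdot)=\beta-W)$ order by order, using that $\beta-W\in F_{2g+2}\subset (\partial W)\cdot\mathfrak{m}$ to get the vector field in $F_{2g-1}\otimes V$ as required for $\gamma^1\in\mg^0$, and integrate.

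\textbf{Main obstacle.} The conceptual content is Step 2 — proving the correct finite-determinacy bound for this specific weighted-homogeneous-type singularity while keeping everything $G$-equivariant and, crucially, tracking the grading condition \eqref{conditions} to see both why $\beta-W$ lands in $F_{2g+2}$ automatically and why $2g+2$ exceeds the determinacy threshold. The case distinction $g\equiv 1\bmod 3$ is exactly the failure of this clean bound: there a degree-$(2g+1)$ $G$-invariant monomial $(z_1z_2z_3)^{(2g+1)/3}$ survives, which cannot be removed by a change of variables but only shifts $\lambda$, so one must show that $W$ and $W+\lambda(z_1z_2z_3)^{(2g+1)/3}$ are nonetheless isomorphic (or that the resulting $\lambda$ can be absorbed into the remaining normalization), and that is the delicate point. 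Step 1, by contrast, is essentially formal once the Koszul-exactness of $\iota_{d\alpha^0}$ in degree $\Lambda^2$ is in hand.
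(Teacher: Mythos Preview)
Your two-step architecture is sound, and reversing the paper's order of operations---kill $\alpha^2$ first, then normalize $\alpha^0$---works in principle, since $\alpha^0\equiv W\bmod F_{2g+2}$ inherits an isolated critical point from $W$ and hence $(\partial_1\alpha^0,\partial_2\alpha^0,\partial_3\alpha^0)$ is still a regular sequence. The paper does it the other way: it first normalizes $\alpha^0$ to $W$ exactly by iterated formal diffeomorphisms, and only then kills $\alpha^2$ using the Koszul complex for $dW$ (rather than $d\alpha^0$). That order is marginally cleaner because the regularity of the sequence for the explicit polynomial $W$ is immediate, whereas for the unknown power series $\alpha^0$ you implicitly need the finite-determinacy you are about to prove.

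The genuine gap is your treatment of the extra term $\lambda(z_1z_2z_3)^{(2g+1)/3}$ when $g\equiv 1\bmod 3$. You say it ``cannot be removed by a change of variables but only shifts $\lambda$'' and flag it as the delicate point you do not know how to resolve. In fact it is removed in one line: since $\partial_1 W=-z_2z_3+(2g+1)z_1^{2g}$, one has
\[
z_1^{\frac{2g+1}{3}}z_2^{\frac{2g-2}{3}}z_3^{\frac{2g-2}{3}}\cdot\partial_1 W
\;\equiv\; -\,(z_1z_2z_3)^{\frac{2g+1}{3}}\quad\bmod\ F_{4g-1},
\]
so exponentiating the $G$-invariant vector field $\gamma^1=\lambda\, z_1^{\frac{2g+1}{3}}z_2^{\frac{2g-2}{3}}z_3^{\frac{2g-2}{3}}\otimes\xi_1$ (polynomial degree exactly $2g-1$, so it satisfies \eqref{conditions} and lies in $\mg^0$) already carries $\alpha^0$ to something $\equiv W\bmod F_{2g+2}$. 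In other words, $(z_1z_2z_3)^{(2g+1)/3}$ \emph{does} lie in the Jacobian ideal modulo higher order; your intuition that it is a genuine modulus is exactly backwards.

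A secondary point: your appeal to Mather--Yau and ``$2g+2$ large relative to the Milnor number'' is too loose to be a proof here. The paper does not invoke a general determinacy bound; it computes directly that $z_iz_j\in I+F_{2g}$ for $i\ne j$ and $z_i^{2g+2}\in I\cdot F_2+F_{4g}$, where $I=(\partial_1W,\partial_2W,\partial_3W)$. Combined with the observation that the weight condition \eqref{conditions} forces $\alpha^0-W$ to live only in degrees $3+(2g-2)k$, and that the pure powers $z_i^{3+(2g-2)k}$ for $k\ge 2$ fail $G$-invariance, these two relations give precisely the inductive step that pushes the discrepancy to arbitrarily high order. Your homotopy-method sketch, if made precise, would reduce to the same computation.
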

\begin{proof}First we note that in the case ($g\equiv 1\text{ mod
}3$) one may assume that $\lambda=0.$ Indeed, in this case we have
\begin{multline}\exp(\lambda z_1^{\frac{2g+1}3}z_2^{\frac{2g-2}3}z_3^{\frac{2g-2}3}\otimes \xi_1)^*\alpha^0\equiv\\
\alpha^0+\lambda
z_1^{\frac{2g+1}3}z_2^{\frac{2g-2}3}z_3^{\frac{2g-2}3}\frac{\partial
\alpha^0}{\partial z_1}\equiv W\text{ mod
}F_{2g+2}\C[[V^{\vee}]].\end{multline} Thus, we may and will
assume that $\alpha^0\equiv W\text{ mod }F_{2g+2}\C[[V^{\vee}]].$

Let $I\subset \C[V^{\vee}]$ be an ideal generated by
$\frac{\partial W}{\partial z_i},$ $i=1,2,3.$ It is easy to see
that
\begin{equation}\label{ideal_I}z_iz_j\in
I+F_{2g}\C[[V^{\vee}]]\text{ for }i<j,\quad z_i^{2g+2}\in I\cdot
F_2\C[[V^{\vee}]]+F_{4g}\C[[V^{\vee}]].\end{equation}

Indeed, for example $z_1z_2\equiv -\frac{\partial W}{\partial
z_3}\text{ mod }F_{2g}\C[[V^{\vee}]],$ and
\begin{equation}z_1^{2g+2}\equiv\frac1{2g+1}z_1^2\frac{\partial W}{\partial z_1}-\frac1{2g+1}z_1z_2\frac{\partial W}{\partial z_2}
-z_2^{2g}\frac{\partial W}{\partial z_3}\text{ mod
}F_{4g}\C[[V^{\vee}]].\end{equation}

Put $W_{4g-1}=\alpha^0.$ It follows from \eqref{conditions} that
$\alpha^0$ contains only monomials of degree $3+(2g-2)k,$ where
$k\geq 0.$ The difference $W-W_{4g-1}$ does not contain monomials
$z_i^{4g-1},$ since they are not $G\text{-}$invariant. It follows
from \eqref{ideal_I} that $W-W_{4g-1}\in I\cdot
F_{4g-3}\C[[V^{\vee}]]+F_{6g-3}\C[[V^{\vee}]].$ Therefore, there
exist homogeneous polynomials $f_{4g-3,1},f_{4g-3,2},f_{4g-3,3}$
of degree $(4g-3),$ such that
\begin{multline}W_{6g-3}=\exp(f_{4g-3,1}\otimes\xi_1+f_{4g-3,2}\otimes\xi_2
+f_{4g-3,3}\otimes\xi_3)^*W_{4g-3}\\
\equiv W_{2g+1}+f_{4g-3,1}\frac{\partial W}{\partial
z_1}+f_{4g-3,2}\frac{\partial W}{\partial
z_2}+f_{4g-3,3}\frac{\partial W}{\partial z_3}\text{ mod }F_{6g-3}\C[[V^{\vee}]]\\
\equiv W\text{ mod }F_{6g-3}\C[[V^{\vee}]].\end{multline}

Moreover, we can take $f_{4g-3,i}$ such that
$(f_{4g-3,1}\otimes\xi_1+f_{4g-3,2}\otimes\xi_2
+f_{4g-3,3}\otimes\xi_3,0)\in \mg^0.$ We obtain a new formal
function $W_{6g-3}\equiv W\text{ mod }F_{6g-3}\C[[V^{\vee}]].$

Now suppose that we are given with some formal function
$W_{3+(2g-2)k},$ where $k\geq 3,$ such that $(W_{3+(2g-2)k},0)\in
\mg^1$ and $W_{3+(2g-2)k}\equiv W\text{ mod
}F_{3+(2g-2)k}\C[[V^{\vee}]].$ It follows from \eqref{ideal_I}
that $W-W_{3+(2g-2)k}\in I\cdot
F_{1+(2g-2)(k-1)}\C[[V^{\vee}]]+F_{3+(2g-2)(k+1)}.$ Thus, there
exist homogeneous polynomials
$f_{1+(2g-2)(k-1),1},f_{1+(2g-2)(k-1),2},f_{1+(2g-2)(k-1),3}$ of
degree $1+(2g-2)(k-1)$ such that
\begin{multline}\exp(f_{1+(2g-2)(k-1),1}\otimes\xi_1+f_{1+(2g-2)(k-1),2}\otimes\xi_2+
f_{1+(2g-2)(k-1),3}\otimes\xi_3)^*W_{3+(2g-2)k}\equiv \\
W\text{
mod }F_{3+(2g-2)(k+1)}.\end{multline} Again, the exponentiated
formal vector field can be taken to belong to $\mg^0.$ We obtain a
new formal function $W_{3+(2g-2)(k+1)},$ such that
$(W_{3+(2g-2)(k+1)},0)\in \mg^1$ and $W_{3+(2g-2)(k+1)}\equiv
W\text{ mod }F_{3+(2g-2)(k+1)}\C[[V^{\vee}]].$

Iterating, we obtain infinite sequence of formal diffeomorphisms,
and their product obviously converges. As a result, our MC
solution $\alpha$ is equivalent to $(W,\alpha'^2)$ for some
$\alpha'^2\in F_{2g}\C[[V^{\vee}]]\otimes \Lambda^2 V.$ Since the
quotient $\C[[V^{\vee}]]/I$ is finite-dimensional, it follows that
the sequence $(\frac{\partial W }{\partial z_1},\frac{\partial W
}{\partial z_2},\frac{\partial W }{\partial z_3})$ is regular in
$\C[[V^{\vee}]],$ and hence the Koszul complex
$\C[[V^{\vee}]]\otimes \Lambda(V)$ with differential $\iota_{dW}$
is a resolution of $\C[[V^{\vee}]]/I.$ Since $\alpha'^2$ is a
cocycle in the Koszul complex, it is also a coboundary. Hence there exists
$\gamma^3\in \C[[V^{\vee}]]\otimes \Lambda^3 V$ such that
$\iota_{dW}(\gamma^3)=-\alpha'^2.$ Again, $\gamma^3$ can be
choosen to belong to $\mg^0.$ By the explicit formula
\eqref{act_3_forms}, the exponential of $(0,\gamma^3)$ maps
$(W,\alpha'^2)$ to $(W,0),$ and we are done.
\end{proof}

\section{Classification theorem on $A_{\infty}$-structures}
\label{classification_theo}

Take the algebra $A=\Lambda(V)$ with standard grading
($\deg(V)=1$). Consider the following DG Lie algebra $\mh:$

\begin{equation}\mh^d \;\; = \!\!\!\!\! \prod_{\substack{3i+j-(4g-4)k = 3d+3 \\
k \geq 0, \, i \geq d+2}}\!\!\!\!\! \Hom^j(A^{\otimes i},A)^G\,
\hbar^k.\end{equation}

The differential is Hochshild differential and the bracket is
Gerstenhaber bracket. Again, $\mh$ is pro-nilpotent with respect
to the filtration $L_{\bullet}\mh,$ where $L_r\mh^d$ is the part
of the product which consists of terms with $i\geq d+1+r.$

Theorem \ref{formality} implies the following lemma (see \cite{Se}
for detailed explanation).

\begin{lemma} There exists a filtered
$L_{\infty}\text{-}$quasi-isomorphism $\Phi:\mh\to \mg,$ with
$\Phi^1$ being the obvious $\hbar\text{-}$linear extension of
Hochshild-Kostant-Rosenberg map.
\end{lemma}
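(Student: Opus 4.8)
The goal is to produce a filtered $L_\infty$-quasi-isomorphism $\Phi:\mh\to\mg$ whose linear term is the (Hochschild–Kostant–Rosenberg) HKR map. The strategy is to obtain $\Phi$ by specializing the $GL(V)$-equivariant $L_\infty$-morphism $\Phi^{HKR}:CC^{\cdot}(A,A)\to\C[[V^\vee]]\otimes\Lambda(V)$ of Theorem \ref{formality}, and then checking it restricts to the truncated, $\hbar$-graded, $G$-invariant subalgebras $\mh$ and $\mg$ and is compatible with the filtrations $L_\bullet$.

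First I would record the bookkeeping that makes $\mh$ and $\mg$ ``the same object with two different differentials''. Under HKR, $\Hom^j(A^{\otimes i},A)$ maps into the polyvector-field summands with symmetric degree $i$ (at most) in $V^\vee$ and $\Lambda$-degree $j$; more precisely the linear HKR map $\beta\mapsto(\xi\mapsto\sum_{j\ge1}\beta^j(\xi,\dots,\xi))$ sends $\Hom^j(A^{\otimes i},A)$ into $\prod_{i'\le i}\Sym^{i'}(V^\vee)\otimes\Lambda^j(V)$. So to land exactly in $\mg$ — which requires the congruence $2i'+j-3d-3\equiv0\bmod (4g-4)$ and $2i'+j-3d-3\ge0$ — one needs the grading conventions on $\mh$ and $\mg$ to be set up so that they correspond under HKR. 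This is precisely why the defining constraint for $\mh^d$ reads $3i+j-(4g-4)k=3d+3$ (Hochschild/cochain side, weight $3i$ per the $\Sym^i$ that the $i$-linear operations can produce) while for $\mg^d$ it reads $2i+j-(4g-4)k=3d+3$ (polyvector side). I would verify that the HKR image of a $G$-invariant Hochschild cochain is $G$-invariant (clear, HKR is $GL(V)$-equivariant hence $G$-equivariant), that the $\hbar$-grading is respected (HKR does not touch $\hbar$, it is the $\hbar$-linear extension), and — the one genuinely substantive point — that the higher terms $\Phi^k$ of the equivariant formality morphism do not increase polyvector degree in a way that escapes the truncation $i\ge d+2$, i.e. that $\Phi$ maps $\mh$ into $\mg$ as graded vector spaces. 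Equivariance forces each $\Phi^k$ to be built from $G$-equivariant operations, and Kontsevich's graph formulas are local (do not raise the differential order beyond the sum of the inputs'), which should give the required containment; I would spell this out by tracking, for a product $\Phi^k(\beta_1,\dots,\beta_k)$ of cochains of orders $i_1,\dots,i_k$, that the output has polyvector order $\le i_1+\dots+i_k-(k-1)$ and $\Lambda$-degree $j_1+\dots+j_k-(k-1)$, so the linear constraint \eqref{conditions} is preserved. The same count shows the filtration is respected: if $\beta_a\in L_{r_a}\mh$, i.e. $i_a\ge d_a+1+r_a$, then the output lies in the right $L_{r_1+\dots+r_k}\mg$.

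The remaining ingredient is that $\Phi$ is a \emph{quasi}-isomorphism in the filtered sense, i.e. induces quasi-isomorphisms on each $L_r\mh/L_{r+1}\mh\to L_r\mg/L_{r+1}\mg$. On the associated graded the differential and bracket degenerate (the higher cochain structure drops out by the weight count just made), so $\Phi$ on $\gr_r$ is induced by the linear HKR map between the relevant graded pieces of $CC^{\cdot}(\Lambda V,\Lambda V)$ and $\C[[V^\vee]]\otimes\Lambda V$; that HKR is a quasi-isomorphism is the classical statement of \cite{HKR}, and cutting down to $G$-invariants preserves this because taking invariants under the finite group $G$ is exact in characteristic zero. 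Assembling: $\Phi=\Phi^{HKR}|_{\mh}$ is filtered, has the stated linear term, and is a filtered $L_\infty$-quasi-isomorphism.

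I expect the main obstacle to be the degree/weight bookkeeping in the previous paragraph — verifying cleanly that every higher graph term of the equivariant formality morphism respects the two truncations ($i\ge d+2$) and the mod-$(4g-4)$ congruence, so that $\Phi^{HKR}$ genuinely restricts to a morphism $\mh\to\mg$ rather than merely to some larger completion. Once the combinatorics of ``HKR matches the weight $3i$ on the Hochschild side with the weight $2i$ on the polyvector side, and higher $\Phi^k$ only decrease the relevant weights'' is pinned down, everything else is formal; this is exactly the point the reference \cite{Se} treats in the genus-two case and which carries over verbatim.
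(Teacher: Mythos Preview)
Your plan is the right one and aligns with the paper's brief indication (the paper gives no proof, merely citing Theorem~\ref{formality} and referring to \cite{Se} for details): restrict the $GL(V)$-equivariant formality morphism to the truncated, $G$-invariant, $\hbar$-graded subspaces and verify it lands where it should. However, your explicit degree count for the higher $\Phi^n$ is incorrect, and since verifying the \emph{exact} constraint $2I+J-(4g-4)K=3D+3$ that defines $\mg^D$ is the entire substance of the argument, this matters.

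Your claim that $\Phi^n(\beta_1,\dots,\beta_n)$ has $\Lambda$-degree $\sum j_a-(n-1)$ is wrong; the correct value involves the $i_a$ as well. The clean way to pin down both the $\Sym$-degree $I$ and the $\Lambda$-degree $J$ of the output is to use two constraints simultaneously. First, $\Phi^n$ is a component of an $L_\infty$-morphism for the \emph{original} Hochschild and polyvector gradings, hence has degree $1-n$ there: if $\beta_a\in\Hom^{j_a}(A^{\otimes i_a},A)$ has Hochschild degree $i_a+j_a-1$, the output has polyvector degree $J-1=\sum_a(i_a+j_a-1)+(1-n)$, giving $J=\sum_a(i_a+j_a)-2(n-1)$. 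Second, $GL(V)$-equivariance restricted to the central $\C^*\subset GL(V)$ forces weight-preservation: the weight of $\Hom^{j}(A^{\otimes i},A)$ under the scaling action is $j$, that of $\Sym^I(V^\vee)\otimes\Lambda^J(V)$ is $J-I$, so $J-I=\sum_a j_a$ and hence $I=\sum_a i_a-2(n-1)$. With these exact values one now checks directly that $2I+J-(4g-4)\sum_a k_a=3D+3$ for $D=\sum_a d_a+1-n$, that $I\ge D+2$, and that $i_a\ge d_a+1+r_a$ forces $I\ge D+1+\sum_a r_a$, establishing that $\Phi$ is filtered. Your inequality $I\le\sum_a i_a-(n-1)$ is true but insufficient: an inequality cannot verify the defining \emph{equality} for membership in $\mg^D$, so ``the linear constraint \eqref{conditions} is preserved'' does not follow from your count. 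The final step (HKR is a quasi-isomorphism on each associated graded piece, and taking $G$-invariants is exact in characteristic zero) is fine as you stated it.
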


Note that, analogously to the discussion in Section \ref{intro_to_formality}, each MC element $\alpha\in\mh^1$ defines a $\Z/2$-graded
$A_{\infty}$-structure on $A.$ Moreover, equivalent MC elements yield strictly homotopic $A_{\infty}$-structures. In the following sections, on the A side and on the
 B side, we will encounter two different $A_{\infty}$-structures on $A,$ which come from the same equivalence class in $MC(\mh).$

We are going to describe this equivalence class below.

Consider arbitrary $\alpha\in \mh^1.$ Its components are $G$-equivariant $i\text{-}$linear maps
$\alpha^i:A^{\otimes i}\to A,$ for $i\geq 3.$ Further, each $\alpha^i$ has (finite)
decomposition
$\alpha^i=\alpha^i_0+\alpha^i_1\hbar+\alpha^i_2\hbar^2+\dots,$
where \begin{equation}\alpha^i_k\in \Hom^{6-3i+(4g-4)k}(A^{\otimes
i},A)^G.\end{equation}

Note that if $\alpha^i_k\ne 0,$ then $(6-3i+(4g-4)k)\leq 3.$ It
follows that $\alpha^i_1=0$ for $3\leq i<\frac{4g-1}3.$ We will
also need the following elementary observations:

\begin{equation}\label{observation1}L_{2g}\mg^1=(\hbar^2\mg)^1;\end{equation}

\begin{multline}\label{observation2}\Phi^1(\Hom^{2-2g}(A^{\otimes
2g},A)^{G})=(\Sym^{2g}(V^{\vee})\otimes
\Lambda^2(V))^{G}=\\
\begin{cases}\C\cdot z_1^{2g}\otimes
(\xi_2\wedge \xi_3)+\C\cdot z_2^{2g}\otimes (\xi_3\wedge
\xi_1)+\C\cdot z_3^{2g}\otimes (\xi_1\wedge \xi_2) & \text{if }
g\not\equiv 1\text{ mod }3\\
\C\cdot z_1^{2g}\otimes (\xi_2\wedge \xi_3)+\C\cdot
z_2^{2g}\otimes (\xi_3\wedge \xi_1)+\C\cdot z_3^{2g}\otimes
(\xi_1\wedge \xi_2)+\\
\C\cdot
z_1^{\frac{2g+1}3}z_2^{\frac{2g+1}3}z_3^{\frac{2g-2}3}\otimes
(\xi_1\wedge \xi_2)+\C\cdot
z_1^{\frac{2g+1}3}z_2^{\frac{2g-2}3}z_3^{\frac{2g+1}3}\otimes
(\xi_3\wedge \xi_1)+\\
\C\cdot
z_1^{\frac{2g-2}3}z_2^{\frac{2g+1}3}z_3^{\frac{2g+1}3}\otimes
(\xi_2\wedge \xi_3) & \text{if }g\equiv 1\text{ mod
}3;\end{cases}\end{multline}

\begin{multline}\label{observation3} \Phi^1(\Hom^{-2g-1}(A^{\otimes
(2g+1)},A)^{G})=(\Sym^{2g+1}(V^{\vee})^{G}=\\
\begin{cases}\C\cdot z_1^{2g+1}+\C\cdot z_2^{2g+1}+\C\cdot
z_3^{2g+1} & \text{if }g\not\equiv 1\text{ mod }3\\
\C\cdot z_1^{2g+1}+\C\cdot z_2^{2g+1}+\C\cdot z_3^{2g+1}+ \C\cdot
(z_1z_2z_3)^{\frac{2g+1}3} & \text{if }g\equiv 1\text{ mod }3.
\end{cases}
\end{multline}

\begin{theo}\label{classification_theo1} Let $\alpha\in \mh^1$ be an MC element such that
$\Phi^1(\alpha^3_0)=-z_1z_2z_3$ and
\begin{equation}\Phi^1(\alpha^{2g+1}_1)=\begin{cases}z_1^{2g+1}+z_2^{2g+1}+z_3^{2g+1} & \text{ if }g\not\equiv 1\text{ mod }3\\
z_1^{2g+1}+z_2^{2g+1}+z_3^{2g+1}+\lambda
(z_1z_2z_3)^{\frac{2g+1}3},\text{ where }\lambda\in \C & \text{ if
}g\equiv 1\text{ mod }3.\end{cases}\end{equation}
Then we have that MC element $\Phi_*(\alpha)\in MC(\mg)$ is equivalent to $(W,0)\in MC(\mg),$ in the notation of the previous section.
In particular, all such $\alpha$ are equivalent to each other.
\end{theo}

\begin{proof} First we will replace $\alpha$ with equivalent
$\alpha'$ satisfying the assumptions of the theorem, and such that
$\alpha'^i_1=0$ for $3\leq i<2g.$ We will need the following
\begin{lemma}\label{action_of_exp} 1) Take some $\gamma^i_1\in \mh^0$ lying in the component
$\Hom^{3-3i+(4g-4)}(A^{\otimes i},A).$ Then for each MC element
$\alpha\in \mh^1$ we have
\begin{equation}\alpha'=\exp(\gamma^i_1)\cdot \alpha\equiv
\alpha-\partial\gamma+[\gamma,\alpha]\text{ mod
}(\hbar^2\mh)^1.\end{equation}

2) If. moreover, $i\leq 2g-2,$ then we have that $\alpha'$
satisfies the assumptions of the theorem.
\end{lemma}
\begin{proof} 1) This is evident.

2) According to 1) and \eqref{observation1}, we only need to check
that the polynomial $\Phi^1([\gamma^i_1,\alpha^{2g+2-i}_0])$ does
not contain monomials $z_i^{2g+1}.$ But for degree reasons, for
$2\leq i\leq 2g-2$ we have that $\alpha^{2g+2-i}_0$ vanishes when
restricted to $V^{\otimes (2g+2-i)}.$ Further, for $2\leq i\leq
2g-3,$ we have that $\gamma^i_1$ vanishes when restricted to
$V^{\otimes i}.$ Therefore, in the case $2\leq i\leq 2g-3$
$[\gamma^i_1,\alpha^{2g+2-i}_0]$ vanishes on $V^{\otimes (2g+1)},$
hence the assertion.

Further, in the case $i=2g-2,$ it suffices to notice that
$\gamma_1^{2g-2}(\xi_i^{\otimes 2g-2})=0$ from the
$G\text{-}$equivariance condition.
\end{proof}

% Take the maps $\phi_i$ from the formula
%\ref{exp_gamma}, with $\gamma=\gamma^i_1.$ Note that the maps
%$\phi_{\geq m}$ have no effect on ($\alpha\text{ mod
%}L_{m-1}\mh^1$) Since we are interested in MC solutions only
%modulo $L_{2g}\mh^1,$ the maps $\phi_{\geq 2g+1}$ are inessential
%for us. The first non-zero map is $\phi_1=\id,$ the next is
%$\phi_i=\gamma^i_1.$ After it follows the map
%\begin{equation}\phi_{2i-1}=\frac12\gamma^i_1(\sum\limits_{j=0}^{i-1}\id^{\otimes j}\otimes\gamma^i_1\otimes\id^{\otimes
%(i-1-j)}).\end{equation}

%For $g\geq 6,$ we have $2i-1\geq \frac{8g-11}3>2g,$ hence
%$\phi_{2i-1}$ is inessential for us. For $3\leq g\leq 5,$ the only
%cases when $2i-1\leq 2g$ are $(g=3,i=3)$ and $(g=4,i=4).$ But in
%these cases we have that $\phi_{2i-1}$ has degrees $4$ and $6$
%respectively, hence $\phi_{2i-1}=0.$

%Further, for the other non-zero $\phi_m$ we have that $m\geq
%3i-2\geq 3\lceil \frac{4g-4}3\rceil -2\geq 2g+1$ for $g\geq 3,$
%and hence these maps are inessential for us.

%Thus, we may assume that the only non-zero maps are $\phi_1=\id$
%and $\phi_i=\gamma^i_1.$ Then the associated exponent acts on MC
%solutions exactly by the formula $\alpha\mapsto
%\alpha-\partial\gamma^i_1+[\gamma^i_1,\alpha].$

Take the smallest $i_0$ such that $\alpha^{i_0}_1\ne 0.$ Suppose
that $i_0<2g.$ Since $\alpha$ is MC solution, we have that
$\partial\alpha^{i_0}_1=0.$ Denote by $\bar{A}=\sum\limits_{k\geq
1}\Lambda^k(V)$ the augmentation ideal of $A.$ Simple degree
counting shows that $\Hom^{6-3i_0+4g-4}(\bar{A}^{\otimes
i_0},A)=0.$ Since the reduced Hochshild complex embeds
quasi-isomorphically to the standard one, we have that there
exists $\gamma^{i_0-1}_1\in \mh^0$ such that $\partial
\gamma^{i_0-1}_1=\alpha^{i_0}_1.$ Then, it follows from Lemma
\ref{action_of_exp} that $\alpha'=\exp(\gamma^{i_0-1}_1)\cdot \alpha$
satisfies the assumptions of the theorem. Moreover,
$\alpha'^i_1=0$ for $3\leq i\leq i_0.$

Iterating, we obtain some equivalent MC solution $\alpha'\in
\mh^1$ satisfying the assumptions of the theorem and such that
$\alpha'^i_1=0$ for $3\leq i<2g.$ Assume from this moment that
$\alpha$ itself satisfies this property.

Since $\alpha$ is MC solution, we have
\begin{equation}\partial\alpha^3_0=0,\quad\partial\alpha^{2g}_1=0,\quad\partial\alpha^{2g+1}_1+[\alpha^3_0,\alpha^{2g}_1]=0.\end{equation}

Therefore, $\alpha^{2g}_1$ satisfies the identity
\begin{equation}\label{cond_on_Phi^1(alpha^2g_1)}[z_1z_2z_3,\Phi^1(\alpha^{2g}_1)]=-[\Phi^1(\alpha^3_0),\Phi^1(\alpha^{2g}_1)]=
-\Phi^1([\alpha^3_0,\alpha^{2g}_1])=\Phi^1(\partial
\alpha^{2g+1}_1)=0.\end{equation}

From \eqref{cond_on_Phi^1(alpha^2g_1)} and from
\eqref{observation2} we conclude that
\begin{multline}\label{Phi^1(alpha^2g_1)}\Phi^1(\alpha^{2g}_1)=\begin{cases}0 & \text{if }g\not\equiv 1\text{ mod }3\\
\lambda'(z_1^{\frac{2g+1}3}z_2^{\frac{2g+1}3}z_3^{\frac{2g-2}3}\otimes
(\xi_1\wedge \xi_2)+
z_1^{\frac{2g+1}3}z_2^{\frac{2g-2}3}z_3^{\frac{2g+1}3}\otimes
(\xi_3\wedge \xi_1)+\\
z_1^{\frac{2g-2}3}z_2^{\frac{2g+1}3}z_3^{\frac{2g+1}3}\otimes
(\xi_2\wedge \xi_3)),\,\lambda'\in \C & \text{if }g\equiv 1\text{
mod }3.\end{cases}
\end{multline}

Simple degree counting shows that
\begin{multline}\label{tildealpha}\tilde{\alpha}:=\sum\limits_{n\geq 1}(-1)^{\frac{n(n-1)}{2}}
\frac{1}{n!}\Phi^n(\alpha,\dots,\alpha)\equiv
\Phi^1(\alpha^3_0)+\hbar
\Phi^1(\alpha^{2g+1}_1)-\hbar\Phi^2(\alpha^3_0,\alpha^{2g}_1)\\
\text{ mod }L_{2g}\mg^1=(\hbar^2 \mg)^1.\end{multline}

\begin{lemma}\label{no_changes}The polynomial $\Phi^2(\alpha^3_0,\alpha^{2g}_1)\in \Sym^{2g+1}(V^{\vee})$
does not contain terms $z_i^{2g+1}.$\end{lemma}
\begin{proof}
If $\alpha'^{2g}_1\in \Hom^{2-2g}(A^{\otimes 2g},A)$ is a
Hochshild cocycle homologous to $\alpha^{2g}_1$ and
$\gamma^2_0\in\Hom^{-3}(A^{\otimes 2},A),$ then
\begin{multline*} \Phi^2(\partial \gamma^2_0,\alpha'^{2g}_1)=\pm
\Phi^2(\gamma^2_0,\partial \alpha'^{2g}_1)\pm
\Phi^1([\gamma^2_0,\alpha'^{2g}_1])\pm
\partial\Phi^2(\gamma^2_0,\alpha'^{2g}_1)\pm
[\Phi^1(\gamma^2_0),\Phi^1(\alpha'^{2g}_1)]\\
=\pm \Phi^1([\gamma^2_0,\alpha'^{2g}_1]).
\end{multline*}
It follows from \eqref{Phi^1(alpha^2g_1)} that the RHS of the
above chain of identities does not contain monomials $z_i^{2g+1}.$
Analogously, if $\alpha'^3_0\in\Hom^{-3}(A^{\otimes 3},A)$ is a
Hochshild cocycle homologous to $\alpha^3_0$ and
$\gamma^{2g-1}_1\in \Hom^{2-2g}(A^{\otimes (2g-1)},A),$ then we
have that $\Phi^2(\alpha'^3_0,\partial\gamma^{2g-1}_1)$ does not
contain terms $z_i^{2g+1}.$ Therefore, we may assume that
\begin{equation}\alpha^3_0=\Psi^1\Phi^1(\alpha^3_0),\quad \alpha^{2g}_1=
\Psi^1\Phi^1(\alpha^{2g}_1),\end{equation} where $\Psi:\mg\to \mh$
is (the obvious $\hbar\text{-}$linear extension of) Kontsevich's
$L_{\infty}\text{-}$quasi-isomorphism. Further,
$L_{\infty}\text{-}$morphism $\Phi$ can be taken to be strictly
left inverse to $\Psi,$ that is $\Phi\Psi=\Id$ (Remark
\ref{left_inverse}). Under this assumptions, the coefficients of
$\Phi^2(\alpha^3_0,\alpha^{2g_1})$ in the monomials $z_i^{2g+1}$
equal to
\begin{equation}\label{Psi^2}\pm\Psi^2(\Phi^1(\alpha^3_0),\Phi^1(\alpha^{2g}_1))(\xi_i^{\otimes (2g+1)}),\quad i=1,2,3.\end{equation}
From the precise formulas for $\Phi^1(\alpha^3_0)$($=-z_1z_2z_3$)
and $\Phi^1(\alpha^{2g}_1)$ (formula \eqref{Phi^1(alpha^2g_1)}),
as well as for the component $\Psi^2$ (\cite{Ko}, subsection 6.4,
with suitable changes) one obtains that \eqref{Psi^2} equals to
zero, as follows. In the notation of \cite{Ko}, subsection 6.4,
for each relevant admissible graph $\Gamma$ we have
$\mathcal{U}_{\Gamma}(\Phi^1(\alpha^3_0),\Phi^1(\alpha^{2g}_1))(\xi_i^{\otimes
(2g+1)})=0.$ Since $\Psi^2$ is a linear combination of
$\mathcal{U}_{\Gamma},$ we obtain that \eqref{Psi^2} equals to
zero.
\end{proof}

Further, $L_{2g}\mg^1=(\hbar^2 \mg)^1$ consists of pairs
$(\tilde{\alpha}^0,\tilde{\alpha}^2)$ such that
$\tilde{\alpha}^0\in F_{4g-1}\C[[V^{\vee}]],$ and
$\tilde{\alpha}^2\in F_{4g-2}\C[[V^{\vee}]]\otimes \Lambda^2 V.$
From \eqref{tildealpha} and Lemma \ref{no_changes} it follows that
$\tilde{\alpha}$ satisfies the assumptions of Lemma
\ref{equiv_to_(W,0)}. Therefore, $\tilde{\alpha}$ is equivalent to
$(W,0).$ By Lemma \ref{bijection}, $\Phi$ induces a bijection on
the equivalence classes of Maurer-Cartan solutions. It follows
that $\alpha$ with required properties is unique up to equivalence.
\end{proof}

We are interested in the following reformulation of the above
Theorem. Suppose that we are given with a $(\Z/2)\text{-}$graded
$A_{\infty}\text{-}$structure $(\mu^1,\mu^2,\dots)$ on
$A=\Lambda(V).$ Moreover, assume that all $\mu^i$ are
$G\text{-}$equivariant, $\mu^1=0,$ $\mu^2$ is the usual wedge
product (twisted by sign), and for $i\geq 3$ we have (finite) decomposition
$\mu^i=\mu^i_0+\mu^i_1+\dots,$ where $\mu^i_k$ is homogeneous of
degree $6-3i+(4g-4)k$ with respect to $\Z\text{-}$gradings.
Suppose that for $z\in V\subset A$ we have
\begin{equation}\mu^3_0(z,z,z)=-z_1z_2z_3,\end{equation}
and
\begin{equation}\mu^{2g+1}_1(z,\dots,z)=\begin{cases}z_1^{2g+1}+z_2^{2g+1}+z_3^{2g+1} & \text{if }g\not\equiv 1\text{ mod }3\\
z_1^{2g+1}+z_2^{2g+1}+z_3^{2g+1}+\lambda(z_1z_2z_3)^{\frac{2g+1}3},\,\lambda\in\C
& \text{if }g\equiv 1\text{ mod }3.\end{cases}\end{equation} Then by Theorem \ref{classification_theo1}
such a structure is determined uniquely up to
$G\text{-}$equivariant $A_{\infty}\text{-}$quasi-isomorphisms. We
denote this class of $G\text{-}$equivariant
$A_{\infty}\text{-}$structures by $\cA'.$

\section{Categories of singularities and matrix factorizations}
\label{matr_fact}

Let $V=\C^n$ and take some non-zero polynomial $W\in \C[V^{\vee}]$ such
that the hypersurface $W^{-1}(0)$ has (not necessarily isolated)
singularity at the origin. Following Orlov \cite{Or}, associate to it the
triangulated category of singularities:
\begin{equation}D_{sg}(W^{-1}(0))=D^b_{coh}(W^{-1}(0))/\Perf(W^{-1}(0)).\end{equation}

Denote by $\overline{D_{sg}}(W^{-1}(0))$ the idempotent completion
of $D_{sg}(W^{-1}(0)).$ The following Lemma easily follows from
the results in \cite{Or2} (see \cite{Se}, proof of Lemma 12.1):

\begin{lemma}\label{O_0_generates} If $W$ has the only singular point at the origin, then the triangulated category $\overline{D_{sg}}(W^{-1}(0))$ is split-generated by
the image of the structure sheaf $\cO_{0}.$\end{lemma}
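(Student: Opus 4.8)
The plan is to reduce the statement to Orlov's theorem comparing the triangulated category of singularities with the category of matrix factorizations, together with the classical fact that, for a hypersurface with an isolated singularity at the origin, $D_{sg}(W^{-1}(0))$ is generated by the residue field of the local ring. First I would recall from \cite{Or2} that, since $D_{sg}$ is a local invariant, one may pass to the completion $R = \C[[V^\vee]]/(W)$ of the local ring at the origin, and that $D_{sg}(W^{-1}(0))$ is equivalent (after idempotent completion) to the stable category $\underline{\mathrm{MCM}}(R)$ of maximal Cohen-Macaulay modules, equivalently to the homotopy category of matrix factorizations of $W \in \C[[V^\vee]]$. Under this equivalence the structure sheaf $\cO_0$ corresponds to the residue field $\mk = R/\m$, viewed as an MCM module over $R$ via its natural matrix factorization (the Koszul-type factorization built from the partial derivatives, which exists precisely because the singularity is isolated so that $\partial_i W$ form a system of parameters).

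The core step is then to show that $\mk$ split-generates $\underline{\mathrm{MCM}}(R)$. Here I would use that $R$ has an isolated singularity, hence finite CM-type is not needed, but the relevant input is that for \emph{any} finitely generated $R$-module $M$ there is a syzygy $\Omega^d M$ (with $d$ large enough, $d \geq \dim R$) which is MCM, and that $\underline{\mathrm{MCM}}(R)$ as a triangulated category is generated by the modules obtainable from $\mk$ by taking cones and shifts — this follows because every MCM module admits a finite filtration whose subquotients are, up to the action of the loop functor $\Omega$, built from $\mk$; more precisely, since $\m$ is generated by finitely many elements, a Koszul-type argument produces $\mk$ inside the thick subcategory generated by $R$-modules of finite length, and every finite-length module lies in the thick subcategory generated by $\mk$ by induction on length using short exact sequences $0 \to \m M \to M \to M/\m M \to 0$. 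Passing to the Verdier quotient by $\Perf$, the free module $R$ becomes zero, so $\cO_0 \leftrightarrow \mk$ alone split-generates. The isolated-singularity hypothesis is exactly what guarantees that $\mk$ has a \emph{finite} free resolution over the regular ring $\C[[V^\vee]]$ of the right shape, i.e. defines an honest matrix factorization and an object of $D_{sg}$, and that syzygies eventually stabilize.

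The main obstacle I anticipate is the passage from ``generates'' to ``split-generates'': the Verdier quotient $D_{sg}$ need not be idempotent-complete, which is why the statement is phrased in terms of $\overline{D_{sg}}(W^{-1}(0))$; one must check that after idempotent completion the thick subcategory generated by $\cO_0$ is everything, and for this I would invoke that $D_{sg}$ is generated (not merely split-generated) by the images of all skyscraper-type sheaves supported at the origin together with the finite-length argument above, so that no new idempotents beyond those forced by $\cO_0$ itself appear. Since this is essentially \cite{Se}'s Lemma in the $g=2$ case and the argument there is purely formal once Orlov's comparison is in place, I would simply cite \cite{Se} for the details and indicate that the isolated-singularity hypothesis on $W$ is used only through the existence of the matrix factorization of $\cO_0$ and the local-ring reductions of \cite{Or2}.
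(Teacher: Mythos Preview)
Your bottom line---cite \cite{Or2} for the local/completion reduction and defer to \cite{Se} for the details---is exactly what the paper does; its entire proof is the single sentence ``easily follows from the results in \cite{Or2} (see \cite{Se} for the proof).''

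That said, your account of \emph{where} the isolated-singularity hypothesis enters is incorrect. The Koszul-type matrix factorization of $\cO_0$ (the paper's Lemma~\ref{Mf_for_O_0}) exists for \emph{any} $W$ vanishing at the origin, and eventual $2$-periodicity of syzygies over a hypersurface ring holds with no isolated-singularity assumption whatsoever (this is Eisenbud's theorem). So the hypothesis is not what makes $\cO_0$ an honest object of $D_{sg}$. Where the hypothesis is actually used is in the localization step from \cite{Or2}: the idempotent-completed category $\overline{D_{sg}}$ depends only on a formal neighborhood of the singular locus, and it is precisely when that locus collapses to $\{0\}$ that one can reduce to the complete local ring and run a finite-length argument to reach $\cO_0$. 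Your middle paragraph also overreaches: an arbitrary MCM module over a complete local hypersurface ring of positive Krull dimension is \emph{not} of finite length and does not literally admit a filtration with subquotients $\mk$, so the sentence ``every MCM module admits a finite filtration whose subquotients are \dots\ built from $\mk$'' is false as stated. The generation argument in \cite{Se} is more delicate than a naive d\'evissage on length.
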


It turns out that the triangulated category $D_{sg}(W^{-1}(0))$ is
$(\Z/2)\text{-}$graded, i.e. the shift by $2$ in
$D_{sg}(W^{-1}(0))$ is canonically isomorphic to the identity
(this follows from Theorem \ref{Orlov's_theorem} below).

Now we define the D$(\Z/2)\text{-}$G category $MF(W)$ of matrix factorizations of $W.$  Matrix
factorizations give a $(\Z/2)\text{-}$graded enhancement of this
category. A matrix factorization for $W$ is a pair of projective (hence free) finitely generated $\C[V^{\vee}]\text{-}$modules $(E^0,E^1),$
together with a pair of morphisms $\delta_E^1:E^1\to E^0,$ $\delta_E^0:E^0\to E^1,$ such that
\begin{equation}\delta_E^1\delta_E^0=W\cdot\id_{E^0},\quad \delta_E^0\delta_E^1=W\cdot\id_{E^1}.\end{equation}
In particular, $E^0$ and $E^1$ have the same rank. Denote by $E=E^0\oplus E^1$ the $\Z/2$-graded $\C[V^{\vee}]$-module,
 and $\delta_E=\delta_E^0\oplus \delta_E^1:E\to E$ the corresponding odd map. We call the map $\delta_E$ "differential"
, although its square does not equal to zero.

If $(E,\delta_E)$ and $(F,\delta_F)$ are matrix factorizations, then we have $2$-periodic complex of morphisms $\Hom(E,F).$ Namely,
as a $\Z/2$-graded  vector space, it consists of all even and odd maps of $\Z/2$-graded modules. The differential is a super-commutator with $\delta.$ It is easy to see that $MF(W)$ is a
strongly pre-triangulated D$(\Z/2)\text{-}$G
category.

\begin{theo}\label{Orlov's_theorem}(\cite{Or}, Theorem 3.9) There is a natural exact equivalence of triangulated categories
$\Ho(MF(W))\sim D_{sg}(W^{-1}(0)).$ \end{theo}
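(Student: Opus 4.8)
The statement to prove is Orlov's Theorem \ref{Orlov's_theorem}: that $\Ho(MF(W))$ is exactly equivalent (as a triangulated category) to $D_{sg}(W^{-1}(0))$. Since this is cited verbatim from \cite{Or}, Theorem 3.9, the natural thing is to recall the construction of the comparison functor and indicate why it is an equivalence, rather than to reprove Orlov's result from scratch. Write $R=\C[V^{\vee}]$, $S=R/(W)$, and $X_0=W^{-1}(0)=\Spec S$. The plan is to produce a functor $\mathrm{Cok}\colon \Ho(MF(W))\to D_{sg}(X_0)$, sending a matrix factorization $(E,\delta_E)$ with $E=E^0\oplus E^1$ to the $S$-module $\coker(\delta_E\colon E^1\to E^0)$, viewed as an object of $D^b_{\coh}(X_0)$ and then of the Verdier quotient $D_{sg}(X_0)=D^b_{\coh}(X_0)/\Perf(X_0)$. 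First I would check this is well defined: $\delta_E^2=W\cdot\id$ forces $\coker(\delta_E)$ to be annihilated by $W$, hence an $S$-module, and the two-periodic exact sequence built from $\delta_E^0,\delta_E^1$ exhibits it as a module with a $2$-periodic free $R$-resolution, so its class in $D_{sg}(X_0)$ is independent of the chosen presentation; homotopic maps of matrix factorizations induce the same map on cokernels up to the relevant equivalence. One also checks directly that $\mathrm{Cok}$ carries the shift functor on $MF(W)$ (swap $E^0\leftrightarrow E^1$, negate $\delta$) to the translation on $D_{sg}(X_0)$, and carries mapping cones of matrix-factorization morphisms to triangles; this uses that $MF(W)$ is strongly pre-triangulated, so that $\Ho(MF(W))$ is triangulated with these cones.

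The core of the argument is that $\mathrm{Cok}$ is essentially surjective and fully faithful. For essential surjectivity, take any $F\in D^b_{\coh}(X_0)$; by the usual dévissage its image in $D_{sg}(X_0)$ agrees with that of a single finitely generated $S$-module $M$ (replace $F$ by a bounded-above free resolution and truncate far enough). Now resolve $M$ freely over the regular ring $R$: because $W$ is (after localizing or completing) part of a regular sequence on the relevant locus, this $R$-resolution becomes eventually $2$-periodic, and a high syzygy $N=\Omega^k_S(M)$ is a maximal Cohen–Macaulay $S$-module with no free summands; such $N$ is exactly $\coker(\delta_E)$ for the matrix factorization obtained from the $2$-periodic tail, and $M\cong N$ in $D_{sg}(X_0)$ up to translation. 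For fullness and faithfulness one compares morphism groups: $\Hom_{D_{sg}(X_0)}(\coker\delta_E,\coker\delta_F)$ is computed, via Orlov's identification of $D_{sg}$ with the stable category of MCM modules (when $S$ is, say, Gorenstein — which it is, being a hypersurface), by stable homomorphisms $\underline{\Hom}_S$, and these match $H^0$ of the $\Hom$-complex in $MF(W)$ modulo null-homotopies, i.e. $\Hom_{\Ho(MF(W))}(E,F)$. The translation-compatibility handled above then upgrades this to an identification of the $\Z/2$-graded $\Hom$'s, and the triangle-compatibility makes the whole thing an exact equivalence of triangulated categories.

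I expect the main obstacle to be the fully-faithful step, specifically controlling morphisms in the Verdier quotient $D^b_{\coh}(X_0)/\Perf(X_0)$: a priori a morphism there is a roof through an auxiliary complex, and one must show every such roof is realized by an honest matrix-factorization homotopy class. This is precisely where Orlov's insight — identifying $D_{sg}$ with the stable category $\underline{\mathrm{MCM}}(S)$ for a hypersurface $S$, for which Eisenbud's theory of matrix factorizations gives a clean description of Hom-sets and of the periodicity $\Omega^2\simeq\id$ — does the work, and I would simply invoke that rather than re-derive it. The other point requiring care is the interplay between the algebraic (module-theoretic) picture over $R$ and the geometric category $D^b_{\coh}(X_0)$ on all of $X_0$ (not just near the origin), which is handled by noting that both sides are unchanged by passing to an affine neighborhood of the singular locus and that $\Perf$ is exactly what the $2$-periodic tails kill. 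Since all of this is Theorem 3.9 of \cite{Or}, the proof is by citation; the sketch above records why the cited statement holds in the generality we use it.
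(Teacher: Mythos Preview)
The paper does not prove this theorem at all: it is stated as a citation of \cite{Or}, Theorem~3.9, and the only content the paper adds is the one-sentence description of the functor, namely that it sends a matrix factorization $(E,\delta_E)$ to the image in $D_{sg}(W^{-1}(0))$ of $\coker(\delta_E^1\colon E^1\to E^0)$. Your proposal goes well beyond this: you outline the actual argument (construction of the cokernel functor, eventual $2$-periodicity of free resolutions over a hypersurface ring via Eisenbud, identification with the stable category of MCM modules, and the fully-faithful step). That sketch is accurate and is essentially how Orlov's proof proceeds, but it is strictly more than what the paper itself provides. If your intent is to match the paper, the correct ``proof'' here is simply: \emph{see \cite{Or}, Theorem~3.9; the functor is as described above.}
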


This equivalence associates to a matrix factorization
$(E,\delta_E)$ the projection of $\coker(\delta^1:E^1\to E^0)$
(clearly, $W$ annihilates this $\C[V^{\vee}]\text{-}$module, hence
it can be considered as an object of $D^b_{coh}(W^{-1}(0))$).

We would like to write down explicitly the matrix factorization which corresponds to the structure sheaf of origin under the equivalence of Theorem
\ref{Orlov's_theorem}.
Decompose the polynomial $W$ into the sum of its graded
components:
\begin{equation}W=\sum\limits_{i=2}^kW_i,\quad W_i\in\Sym^i(V^{\vee}).\end{equation}
Take the one-form
\begin{equation}\gamma=\sum\limits_{i=2}^k\frac{dW_i}{i}.\end{equation} Denote by $\eta=\sum\limits
z_k\xi_k$ the Euler vector field on $V.$

Now take the matrix factorization $(E,\delta_E)$ with
$E=\Omega(V)=\C[V^{\vee}]\otimes \Lambda(V^{\vee}),$ and $\delta_E=\iota_{\eta}+\gamma\wedge\cdot.$ It
is easy to see that $\delta_E^2=\gamma(\eta)\cdot\id=W\cdot\id.$

\begin{lemma}\label{Mf_for_O_0}(\cite{Se}, Lemma 12.3) The object $\coker(\delta_E^1)$ is isomorphic to
$\cO_0$ in $D_{sg}(W^{-1}(0)).$
\end{lemma}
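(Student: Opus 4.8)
The plan is to exhibit an explicit free resolution of $\cO_0$ over $\C[V^{\vee}]/(W)$ and identify it, up to the passage to $D_{sg}$, with the cokernel of $\delta_E^1$. First I would recall that $E = \Omega(V) = \C[V^{\vee}]\otimes\Lambda(V^{\vee})$ carries the odd operator $\delta_E = \iota_\eta + \gamma\wedge\cdot$, and that by the computation already noted, $\delta_E^2 = \gamma(\eta)\cdot\id = W\cdot\id$, so $(E,\delta_E)$ is indeed a matrix factorization of $W$. Here $\eta = \sum z_k\xi_k$ is the Euler field and $\gamma = \sum_i \frac{dW_i}{i}$, so that $\gamma(\eta) = \sum_i \frac{1}{i}\sum_k z_k\partial_k W_i = \sum_i \frac{1}{i}\cdot i\,W_i = \sum_i W_i = W$ by Euler's identity. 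The key point is that by Theorem \ref{Orlov's_theorem}, the object of $D_{sg}(W^{-1}(0))$ attached to $(E,\delta_E)$ is the projection of $M := \coker(\delta_E^1\colon E^1\to E^0)$, viewed as a $\C[V^{\vee}]/(W)$-module via the natural action, and we must show $M \cong \cO_0$ in $D_{sg}(W^{-1}(0))$.

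The main step is to produce a quasi-isomorphism in $D^b_{coh}(W^{-1}(0))$ between $M$ and the structure sheaf $\cO_0$ modulo perfect complexes. I would first observe that the contraction $\iota_\eta$ alone makes $\Omega(V)$ into the Koszul complex on the regular sequence $(z_1,\dots,z_n)$, which resolves $\C = \cO_0$ over $\C[V^{\vee}]$; adding $\gamma\wedge\cdot$ is exactly the curved perturbation that turns this Koszul complex into a matrix factorization of $W$. The standard mechanism (for instance the argument in Orlov's and Eisenbud's treatment of matrix factorizations, or directly in \cite{Se}, Lemma 11.3) is: on the hypersurface $W^{-1}(0) = \Spec \C[V^{\vee}]/(W)$, the two-periodic complex
\begin{equation}
\cdots \xrightarrow{\delta_E^0} E^1 \xrightarrow{\delta_E^1} E^0 \xrightarrow{\delta_E^0} E^1 \xrightarrow{\delta_E^1} E^0
\end{equation}
has all higher homology vanishing and $\coker(\delta_E^1) = M$ in degree zero, so it is a (two-periodic, eventually) free resolution of $M$ over $\C[V^{\vee}]/(W)$. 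Truncating it gives a finite free resolution agreeing with $M$ up to a perfect complex; comparing with the Koszul resolution of $\cO_0$ truncated in the same range, and using that the difference $\gamma\wedge\cdot$ raises the polynomial degree (hence the comparison map can be built filtration-by-filtration and is an isomorphism on the associated graded), one gets $M \cong \cO_0$ in $D_{sg}(W^{-1}(0))$.

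The hard part is the last identification: verifying that perturbing the Koszul differential $\iota_\eta$ by $\gamma\wedge\cdot$ does not change the isomorphism class of the stabilized module in $D_{sg}$. Concretely one must check that the composite map from the Koszul complex of $\cO_0$ to the matrix factorization complex, given by the identity in top degree and corrected in lower degrees by the appropriate homotopies, is a genuine map of complexes over $\C[V^{\vee}]/(W)$ and induces an isomorphism on cokernels after projecting to $D_{sg}$. This is a bookkeeping argument with the two pieces of $\delta_E$ — I expect it to reduce, as in \cite{Se}, to the observation that $\gamma\wedge\cdot$ strictly increases the order of vanishing at the origin, so that it contributes nothing to the leading term and the comparison is an isomorphism by a completeness/degree argument. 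I would simply cite \cite{Se}, Lemma 11.3, for this verification, since the passage from $V = \C^3$ with the specific $W$ to general $n$ changes nothing in the argument.
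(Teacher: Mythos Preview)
The paper gives no proof of this lemma at all; it simply records the statement with the citation to \cite{Se}, Lemma 11.3, and moves on. So there is nothing to compare your argument against beyond the fact that you, too, ultimately defer to that reference.

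Your sketch correctly isolates the mechanism: $(\Omega(V),\iota_\eta)$ is the Koszul resolution of $\cO_0$ over $\C[V^{\vee}]$, the identity $[\iota_\eta,\gamma\wedge\cdot]=W$ exhibits $\gamma\wedge\cdot$ as a null-homotopy for multiplication by $W$ on that resolution, and Eisenbud's folding construction then packages this data into the matrix factorization $(E,\delta_E)$. The ``hard part'' you describe, however, is not best phrased as a filtration argument based on $\gamma\wedge\cdot$ raising polynomial degree. That heuristic is misleading: what actually pins down $\coker(\delta_E^1)\cong\cO_0$ in $D_{sg}$ is Eisenbud's theorem that, for any module $N$ annihilated by $W$ with finite free resolution $F_\bullet$ over $\C[V^{\vee}]$ and null-homotopy $s$ for $W$, the folded matrix factorization $(F_{\mathrm{even}}\oplus F_{\mathrm{odd}},\,d+s)$ has cokernel stably isomorphic to $N$. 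No separate degree-by-degree comparison is needed. Since you end by citing \cite{Se} anyway, your proposal is effectively equivalent to the paper's treatment; if you want to say more, replace the filtration paragraph with a one-line appeal to Eisenbud's construction.
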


\begin{remark}In a similar way, one can write down matrix factorization, corresponding to $\cO_Z,$ where $Z\subset W^{-1}(0)$ is any closed subscheme, which is complete intersection in $V.$\end{remark}

Take the D$(\Z/2)\text{-}$G algebra
\begin{equation}\label{B_W}\cB_W:=\End_{MF(W)}(E).\end{equation} By Lemma \ref{Mf_for_O_0}, it is quasi-isomorphic to
the D$(\Z/2)\text{-}$G algebra
$\bR\Hom_{D_{sg}(W^{-1}(0))}(\cO_0,\cO_0).$  We have the following

\begin{cor}\label{Perf(B_W)} Suppose that $W$ has the only singular point at the origin. Then there is an equivalence $\overline{D_{sg}}(W^{-1}(0))\cong \Perf(\cB_W).$\end{cor}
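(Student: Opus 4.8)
The plan is to combine Lemma \ref{O_0_generates}, Lemma \ref{Mf_for_O_0} and Theorem \ref{Orlov's_theorem} with the general principle that a triangulated category split-generated by a single object, possessing a DG (or D$(\Z/2)$-G) enhancement, is equivalent to the category of perfect modules over the endomorphism algebra of that generator. First I would record that, by Lemma \ref{Mf_for_O_0} and Theorem \ref{Orlov's_theorem}, the image of the matrix factorization $(E,\delta_E)$ under the equivalence $\Ho(MF(W))\simeq D_{sg}(W^{-1}(0))$ is isomorphic to $\cO_0$. Hence $\cB_W=\End_{MF(W)}(E)$ is, as a D$(\Z/2)$-G algebra, quasi-isomorphic to $\bR\Hom_{D_{sg}(W^{-1}(0))}(\cO_0,\cO_0)$; this is already noted in the excerpt just before the statement. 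The point of the corollary is to upgrade this endomorphism computation to an equivalence of the whole split-closed category with $\Perf(\cB_W)$.

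Next I would invoke the split-generation statement: by Lemma \ref{O_0_generates}, the hypothesis that $W$ has its only singular point at the origin guarantees that $\cO_0$ split-generates $\overline{D_{sg}}(W^{-1}(0))$. The standard argument (Keller's theorem on triangulated categories with a compact generator, or the version for D$(\Z/2)$-G categories) then produces a functor $\Perf(\cB_W)\to \overline{D_{sg}}(W^{-1}(0))$ sending the free rank-one module $\cB_W$ to $\cO_0$: one takes the D$(\Z/2)$-G enhancement $MF(W)$ of $D_{sg}(W^{-1}(0))$, considers the full subcategory on the single object $E$, which is a D$(\Z/2)$-G algebra quasi-isomorphic to $\cB_W$, and forms the induced functor on module categories by tensoring. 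This functor is fully faithful on the generating object by the $\bR\Hom$ computation above, hence fully faithful on all of $\Perf(\cB_W)$ since $\Perf(\cB_W)$ is generated by $\cB_W$ under shifts, cones and direct summands and the functor is exact and preserves coproducts/summands. Its essential image is a split-closed triangulated subcategory containing $\cO_0$, hence all of $\overline{D_{sg}}(W^{-1}(0))$ by Lemma \ref{O_0_generates}; so the functor is an equivalence.

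The main obstacle I expect is purely bookkeeping about the $(\Z/2)$-graded (curved-looking, but actually genuinely differential) setting: one must make sure the formalism of perfect modules, the Yoneda-type functor, and Keller's criterion all have the D$(\Z/2)$-G analogues one needs, and that the enhancement $MF(W)$ is well-behaved (pre-triangulated, idempotent-complete after passing to $\Perf$, etc.). These facts are essentially standard and are exactly the kind of thing for which the paper cites \cite{Se}; accordingly I would keep this part brief, citing \cite{Se} (the analogous Corollary there) for the module-theoretic input, and simply note that all the hypotheses of that argument are supplied here by Lemma \ref{O_0_generates}, Lemma \ref{Mf_for_O_0} and Theorem \ref{Orlov's_theorem}. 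The remaining content is then the trivial observation that $\Perf$ of a D$(\Z/2)$-G algebra is automatically idempotent complete, matching the bar over $D_{sg}$.
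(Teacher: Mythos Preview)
Your proposal is correct and follows exactly the approach implicit in the paper: the paper gives no explicit proof of this corollary, treating it as an immediate consequence of Lemma~\ref{O_0_generates}, Lemma~\ref{Mf_for_O_0}, and Theorem~\ref{Orlov's_theorem} together with the standard fact (as in \cite{Ke}, and used in \cite{Se}) that a split-generated enhanced triangulated category is equivalent to $\Perf$ of the endomorphism DG algebra of a split-generator. Your write-up simply spells out this implicit argument.
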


\section{Minimal $A_{\infty}$-model for $\cB_W$}
\label{Koszul}

In this section we describe more explicitly the DG algebra $\cB_W$
introduced in \eqref{B_W}. We also prove that in the special case of
our LG model, it is (equivariantly) quasi-isomorphic to the
$A_{\infty}\text{-}$algebra $\cA'$ from the end of section
\ref{classification_theo} (Proposition \ref{A_equiv_B})

Let $V=\C^n.$ Consider $\Omega(V)=\C[V^{\vee}]\otimes
\Lambda(V^\vee)$ as a complex of $\C[V^{\vee}]\text{-}$modules
with $\deg(\C[V^{\vee}]\otimes \Lambda^kV^\vee)=-k$ and
differential $\iota_{\eta},$ where $\eta=\sum\limits_{k=1}^n
z_k\xi_k$ is the Euler vector field. This complex is just a Koszul resolution of the structure sheaf of the origin $\cO_0$.

Consider the DG algebra
$B=\End_{\C[V^{\vee}]}(\Omega(V)).$ We have that $H^{\cdot}(B)\cong \Ext_{\C[V^{\vee}]}^{\cdot}(\cO_0,\cO_0)\cong
\Lambda(V).$ Further, we can identify
\begin{equation}B\cong \Omega(V)\otimes \Lambda(V),\end{equation}
where for $f\in \Sym(V^{\vee}),$ $\beta\in \Lambda(V^{\vee}),$
$\theta\in \Lambda(V)$ the element $f\beta\otimes \theta\in
\Omega(V)\otimes\Lambda(V)$ corresponds to the endomorphism
$f\beta\wedge \iota_{\theta}(\cdot)\in
B=\End_{\C[V^{\vee}]}(\Omega(V)).$

Explicitly, the differential $\partial:\Omega(V)\otimes
\Lambda(V)\to \Omega(V)\otimes \Lambda(V)$ is given by the formula
\begin{equation}\partial(f\beta\otimes\theta)=\iota_{\eta}(f\beta)\otimes\theta.\end{equation} It is well known that DG
algebra $B$ is formal. Moreover, we can write down explicitly the
quasi-isomorphism of DG algebras $i:\Lambda(V)\to B,$
\begin{equation}i(\theta)=1\otimes \theta.\end{equation} Also, consider the natural
projection $p:B\to\Lambda(V),$

\begin{equation}\begin{cases}p(1\otimes\theta)=\theta & \text{for }\theta\in \Lambda(V);\\
p(f\beta\otimes\theta)=0 & \text{for }f\in \Sym^r(V^{\vee}), \beta\in \Lambda^s(V^{\vee}),
\theta\in \Lambda(V), r+s>0.\end{cases}\end{equation} Clearly, $pi=\id_{\Lambda(V)}.$
Further, $ip$ differs from $\id_B$ by homotopy given by the fromula
\begin{equation}h(f\beta\otimes \theta)=\begin{cases}0 & \text{if }f\beta=\lambda,\,\lambda\in\C\\
\frac1w (df\wedge\beta)\otimes\theta &
\text{otherwise,}\end{cases}\end{equation} where $w=r+s,$ $f\in
\Sym^r(V^{\vee}),$ $\beta\in \Lambda^s(V^{\vee}).$ Moreover, the
maps $h,$ $p,$ $i$ satisfy the following identities:
\begin{equation}h^2=0,\quad ph=0,\quad hi=0.\end{equation}

Now take the polynomial $W\in \C[V^{\vee}]$ with singularity at
the origin. In the previous section we have written down the one-form $\gamma\in\Omega^1(V),$ such that $\iota_{\eta}(\gamma)=W.$
Such $\gamma$ defines a matriz factorization $E=(\Omega(V),\iota_{\eta}+\gamma\wedge\cdot).$ We defined the
D$(\Z/2)$-G algebra $\cB_W:=\End(E).$ It is clear that $\cB_W^{gr}\cong B^{gr},$ where
$\cB_W^{gr}$ (resp. $B^{gr}$) is the underlying
$(\Z/2)\text{-}$graded algebra of $\cB_W$ (resp. $B$). Denote the
differential on $\cB_W$ by $\tilde{\partial}.$ We have the
following explicit formula for the difference of differentials:
\begin{equation}(\tilde{\partial}-\partial)(f\beta\otimes
\theta)=(-1)^{|\beta|-1}\sum\limits_{k=1}^n g_k f\beta\otimes
\iota_{dz_k}\theta,\text{ where }\gamma=\sum\limits_{k=1}^n
g_kdz_k.\end{equation}

We are going to describe the minimal $A_{\infty}$-model for $\cB_W.$ It is obtained from the maps $h,p,i$ above using standard formula of summing up over trees. We
obtain a $(\Z/2)\text{-}$graded $A_{\infty}\text{-}$structure
$\cA$ on the $(\Z/2)$-graded vector space $A=\Lambda(V)$ together with
$A_{\infty}\text{-}$quasi-isomorphism $\cA\to \cB.$ Explicit
computation of $\mu^k:A^{\otimes k}\to A$ goes as follows.
Consider a ribbon tree with $(k+1)$ semi-infinite edges, $k$
incoming and one outgoing, which has only bivalent and trivalent
vertices. Associate with each vertex and each edge an operation by the following formulas:
\begin{equation}\begin{cases}\text{for a bivalent vertex} & b\mapsto
(-1)^{|b|}(\tilde{\partial}-\partial)(b),\,\cB\to\cB;\\
\text{for a trivalent vertex} & (b_2,b_1)\mapsto
(-1)^{|b_1|}b_2b_1,\,\cB^{\otimes 2}\to\cB;\\
\text{for a finite edge} & b\mapsto (-1)^{|b|-1}h(b),\,\cB\to\cB;\\
\text{for an incoming edge} & a\to i(a),\,A\to \cB;\\
\text{for an outgoing edge} & b\to p(b),\,\cB\to
A.\end{cases}\end{equation} Then each such tree gives a map
$A^{\otimes k}\to A$ in an obvious way. The explicit expression for $\mu^k:A^{\otimes k}\to A$ is
just the sum of contributions of all possible trees. The sum is
actually finite because
\begin{equation}(\tilde{\partial}-\partial)(C[[V^{\vee}]]\otimes\Lambda^k(V^{\vee})\otimes\Lambda(V))\subset
C[[V^{\vee}]]\otimes\Lambda^k(V^{\vee})\otimes\Lambda(V),\text{
and}\end{equation}
\begin{equation}h(C[[V^{\vee}]]\otimes\Lambda^k(V^{\vee})\otimes\Lambda(V))\subset
C[[V^{\vee}]]\otimes\Lambda^{k+1}(V^{\vee})\otimes\Lambda(V).\end{equation}

The components $f_k:\cA^{\otimes k}\to \cB$ of the
$A_{\infty}\text{-}$quasi-isomorphism are defined in the same way
with the only difference: to the outgoing edge one attaches the
operation $b\to h(b).$ Again, the sum over trees is actually
finite.

To see that $f_1$ is quasi-isomorphism, take the increasing
filtrations by subcomplexes:

\begin{equation}F_r\cB_W=\Omega(V)\otimes \Lambda^{\leq r}(V),\quad F_r\Lambda(V)=\Lambda^{\leq r}(V).\end{equation}
Then the map $f_1:\Lambda(V)\to \cB_W$ is compatible with these
filtrations, and it induces quasi-isomorphisms on the
subquotients.

Return to the special case $V=\C^3,$
$W=-z_1z_2z_3+z_1^{2g+1}+z_2^{2g+1}+z_3^{2g+1}.$ Then we have
\begin{equation}\label{g_k}g_1=-\frac{z_2z_3}3+z_1^{2g},\quad
g_2=-\frac{z_1z_3}3+z_2^{2g},\quad
g_3=-\frac{z_1z_2}3+z_3^{2g}.\end{equation}

\begin{prop}\label{A_equiv_B} In the above notation, the resulting $A_{\infty}\text{-}$algebra $\cA$ is
$G\text{-}$equivariantly equivalent to $\Lambda(V)$ with the
$A_{\infty}\text{-}$structure $\cA'$ from the end of section
\ref{classification_theo}.\end{prop}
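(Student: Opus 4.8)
The plan is to verify that the $A_\infty$-structure $\cA$ obtained from the Homological Perturbation Lemma on $\cB_W$ satisfies the hypotheses characterizing the class $\cA'$ from the end of Section \ref{classification_theo}; by the uniqueness statement there, this forces $\cA \simeq \cA'$ $G$-equivariantly. So I must check: (i) $G$-equivariance of all $\mu^i$; (ii) $\mu^1 = 0$ and $\mu^2$ is the wedge product; (iii) the grading decomposition $\mu^i = \mu^i_0 + \mu^i_1 + \cdots$ with $\mu^i_k$ homogeneous of degree $6-3i+(4g-4)k$; (iv) the two normalization identities $\mu^3_0(z,z,z) = -z_1z_2z_3$ and the formula for $\mu^{2g+1}_1(z,\dots,z)$.

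First I would deal with the formal, structural points. $G$-equivariance of $\mu^i$ follows because the operations $p$, $i$, $h$, the multiplication on $\cB_W$, and $\tilde\partial - \partial$ (built from the $G$-invariant $\gamma$, since $W$ is $G$-invariant) are all $G$-equivariant, and $\mu^i$ is a sum of tree contributions composed from these. That $\mu^1 = 0$: the only tree with one incoming and one outgoing edge and a single bivalent vertex gives $p \circ (\tilde\partial - \partial) \circ i$; since $i(a) = 1 \otimes a$ has $f\beta = 1 \in \C$, one checks $(\tilde\partial - \partial)(1 \otimes a) = 0$ (the formula involves $g_k f\beta$ with $f\beta$ constant, but composed with $p$ one gets zero as in the classical formal case), so $\mu^1 = p i = \id$ composed appropriately vanishes in the reduced sense — more precisely $\mu^1 = 0$ because $h i = 0$ kills the only relevant tree beyond the trivial one. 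That $\mu^2$ is the wedge product is the statement that the lowest-order trivalent tree contributes $p((ia_2)(ia_1)) = a_2 \wedge a_1$ up to sign (matching the sign convention), and all higher trees contributing to $\mu^2$ raise the polynomial degree. Next, the grading bookkeeping: each bivalent vertex applies $\tilde\partial - \partial = \sum g_k(\cdot)\iota_{dz_k}$, and by \eqref{g_k} each $g_k$ is a sum of a cubic-in-$z$ term ($-z_iz_j/3$, contributing to the "degree $6-3i$" piece, i.e. $k=0$) and a degree-$2g$ term $z_i^{2g}$ (contributing a shift by $2g-2 = \frac{4g-4}{2}$... no: contributing the $\hbar$, i.e. $k=1$, piece). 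A tree with $v$ trivalent vertices has $v$ incoming-minus-one $= v$... standard Euler-characteristic counting of ribbon trees shows the $\mu^i_k$ decomposition has exactly the claimed degree, paralleling the analogous computation in \cite{Se}.

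The substantive part is (iv), the two normalization identities, since these are what actually pin down $W$. For $\mu^3_0(z,z,z)$: the relevant trees have all polynomial degree coming from the "$k=0$" part of a single bivalent vertex, i.e. from the terms $-z_iz_j/3$ in $g_k$, together with the homotopy $h$ (which involves division by the weight $w$) and the projections. I would enumerate the finitely many ribbon trees with $3$ incoming edges contributing in this degree — there are the two cherry-type trees — apply the explicit operations $i$, $h$, $\tilde\partial - \partial$, $p$ with careful attention to the sign rules listed, and sum; the combinatorial factors $\frac{1}{3}$ and the weights in $h$ conspire to give exactly $-z_1z_2z_3$, just as in \cite{Se}, Section 11. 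For $\mu^{2g+1}_1(z,\dots,z)$: here exactly one bivalent vertex must use the $z_i^{2g}$-part of some $g_k$ (contributing the $\hbar$, i.e. $k=1$), and all others use the cubic part, giving a $(2g+1)$-input tree landing in $\Sym^{2g+1}(V^\vee)$; again I enumerate the contributing trees and sum. The answer should be $z_1^{2g+1} + z_2^{2g+1} + z_3^{2g+1}$ (plus the extra $\lambda(z_1z_2z_3)^{(2g+1)/3}$ term allowed when $g \equiv 1 \bmod 3$, which the uniqueness statement permits), matching the form of $W$ up to the finite-determinacy normalization.

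The main obstacle will be the explicit sign-and-coefficient bookkeeping in step (iv): tracking the signs from the nonstandard sign convention through each vertex and edge operation, and verifying that the rational coefficients (from $1/i$ in $\gamma$ and $1/w$ in $h$) combine correctly across the finitely many contributing trees so that the leading cubic and degree-$(2g+1)$ terms come out with coefficient exactly matching $W$. This is precisely the place where Seidel's genus-$2$ computation (\cite{Se}, Sections 10--11) needs to be generalized; the structure of the argument is identical, but the trees with $2g+1$ leaves and the combinatorics of which single vertex carries the $z_i^{2g}$ factor require care. Once this is done, invoking the uniqueness in Section \ref{classification_theo} (together with $G$-equivariance, which is built in) completes the proof that $\cA \simeq \cA'$.
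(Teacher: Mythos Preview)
Your proposal is correct and follows essentially the same approach as the paper: verify that the transferred $A_\infty$-structure on $\Lambda(V)$ satisfies the hypotheses of the classification theorem (i.e., $G$-equivariance, $\mu^1=0$, $\mu^2$ the wedge product, the degree decomposition, and the two normalizations $\Phi^1(\mu^3_0)=-z_1z_2z_3$ and $\Phi^1(\mu^{2g+1}_1)=z_1^{2g+1}+z_2^{2g+1}+z_3^{2g+1}$), then invoke uniqueness. The paper organizes the grading bookkeeping by placing an auxiliary $\Z$-grading on $\cB$ with $\deg(\Sym^i V^\vee\otimes\Lambda^j V^\vee\otimes\Lambda^k V)=2i-j+k$ and a formal $\hbar$ of degree $4-4g$, which makes the decomposition $\mu^d=\sum_k\mu^d_k\hbar^k$ automatic; and for step (iv) it observes that in each case only a \emph{single} ribbon tree contributes nonzero (the ones drawn in Figures~1 and~2), so the ``finitely many contributing trees'' you anticipate actually reduce to one tree each, and no delicate cancellation is needed.
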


\begin{proof} It is useful to take the following $\Z\text{-}$grading on $B=\Omega(V)\otimes \Lambda(V).$
\begin{equation}\deg(\Sym^i(V^{\vee})\otimes\Lambda^j(V^{\vee})\otimes\Lambda^k(V))=2i-j+k.\end{equation}
Then $\partial$ has degree $3,$ $h$ has degree $-3.$ If we want
$\tilde{\partial}$ to have degree $3,$ we should introduce a
formal parameter $\hbar$ with degree $(4-4g).$ Further, we should
write $g_1=-\frac{z_2z_3}3+\hbar z_1^{2g}$ and analogously for
other $g_i.$ The operations $\mu^d$ are then decomposed as
follows: $\mu^d=\mu^d_0+\mu^d_1\hbar+\mu^d_2\hbar^2+\dots,$ with
$\mu^d_k$ being of degree $(6-3d+(4g-4)k).$ Also, it is easy to
see that all $\mu^d$ are $G\text{-}$equivariant. It is
straightforward to check that $\mu^1_{\cA}=0,$ and $\mu^2_{\cA}$
the usual wedge product (this follows from vanishing of the degree
$2$ component of $W$). Further, the only tree (see Figure 1)
contributes to $\Phi^1(\mu^3_0),$ and it equals to $-z_1z_2z_3.$
Analogously, the only tree (see Figure 2) contributes to
$\Phi^1(\mu^{2g+1}_1),$ and it equals to
$z_1^{2g+1}+z_2^{2g+1}+z_3^{2g+1},$ as prescribed. This proves
Proposition.\end{proof}

\begin{center}\begin{tabular}{c}\epsfbox{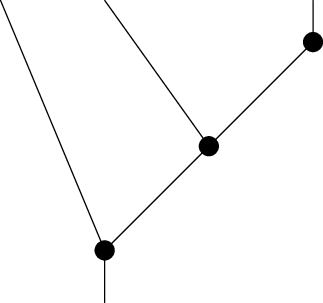}\\
\LARGE{Figure 1.}
\end{tabular}\hfil\begin{tabular}{c}\epsfbox{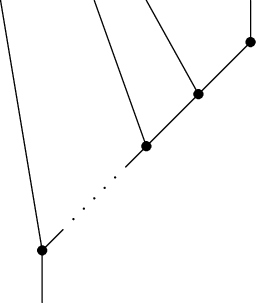}\\\LARGE{Figure 2.}\end{tabular}\end{center}

From Corollary \ref{Perf(B_W)} and Proposition \ref{A_equiv_B} we
obtain the equivalence
\begin{equation}\overline{D_{sg}}(W^{-1}(0))\cong \Perf(\cA').\end{equation}

Further, Orlov's theorem can be extended to the equivariant
setting. Let $K\subset G$ be the cyclic subgroup of order $2g+1,$
generated by the diagonal matrix with diagonal entries
$(\zeta,\zeta,\zeta^{2g-1}),$ where $\zeta$ is the primitive
$(2g+1)\text{-}$th root of unity. Then $D_{sg,K}(W^{-1}(0))$ is
equivalent to $MF_{K}(W).$ The projection of $\cO_0\otimes \C[K]$
split-generates $D_{sg,K}(W^{-1}(0)).$ In $K\text{-}$equivariant
matrix factorizations it corresponds to $(\Omega(V)\otimes
\C[K],\iota_{\eta}+\gamma\wedge\cdot).$ Its endomorphism DG
algebra is naturally isomorphic to the smash product $\C[K]\#
\cB_W,$ which is further $A_{\infty}\text{-}$quasi-isomorphic to
$\C[K]\# \cA'.$ The result is

\begin{cor}\label{A'*Z_{2g+1}}The category $\overline{D_{sg,K}}(W^{-1}(0))$ is equivalent to $\Perf(\C[K]\# \cA').$\end{cor}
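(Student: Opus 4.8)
The plan is to combine the two equivalences established just above: Corollary~\ref{Perf(B_W)} gives $\overline{D_{sg}}(W^{-1}(0))\cong\Perf(\cB_W)$ once we know that $W$ has an isolated singularity at the origin, and Proposition~\ref{A_equiv_B} identifies the D$(\Z/2)$-G algebra $\cB_W$ (for our specific $W=-z_1z_2z_3+z_1^{2g+1}+z_2^{2g+1}+z_3^{2g+1}$) with the $A_\infty$-algebra $\cA'$ up to $G$-equivariant $A_\infty$-quasi-isomorphism. Since $\Perf$ of an $A_\infty$-algebra depends only on its quasi-isomorphism class, stringing these together yields $\overline{D_{sg}}(W^{-1}(0))\cong\Perf(\cA')$, which is exactly the assertion of the corollary. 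So the body of the proof is essentially one sentence: ``Combine Corollary~\ref{Perf(B_W)} with Proposition~\ref{A_equiv_B}.''

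The one genuine hypothesis to discharge is the ``only singular point at the origin'' condition needed to invoke Lemma~\ref{O_0_generates} and hence Corollary~\ref{Perf(B_W)}. First I would check that the critical locus of $W$ on $V=\C^3$ is just $\{0\}$: the partial derivatives are $\partial_1 W=-z_2z_3+(2g+1)z_1^{2g}$, $\partial_2 W=-z_1z_3+(2g+1)z_2^{2g}$, $\partial_3 W=-z_1z_2+(2g+1)z_3^{2g}$, and a short elimination argument (e.g.\ multiplying the relations by $z_i$ and using the resulting symmetry, or observing that $\C[V^\vee]/(\partial_1W,\partial_2W,\partial_3W)$ is finite-dimensional, as already noted in the proof of Lemma~\ref{equiv_to_(W,0)}) shows the only common zero is the origin. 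This is the routine step and I would not belabor it.

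A small point worth stating explicitly is why one is free to pass between $\cB_W$ and $\cA'$ inside $\Perf$: both are D$(\Z/2)$-G (equivalently, $(\Z/2)$-graded $A_\infty$) algebras, Proposition~\ref{A_equiv_B} produces a $G$-equivariant $A_\infty$-quasi-isomorphism $\cA\to\cB_W$ with $\cA\cong\cA'$ as $G$-equivariant $A_\infty$-algebras, and a quasi-isomorphism of $A_\infty$-algebras induces a triangulated equivalence on the perfect derived categories of modules (one can forget the $G$-action here, since the corollary as stated is a non-equivariant statement). Hence $\Perf(\cB_W)\cong\Perf(\cA)\cong\Perf(\cA')$.

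I do not anticipate any real obstacle: everything has been prepared in the preceding sections, and the corollary is a formal consequence. The only place requiring care is making sure the isolated-singularity hypothesis is in force so that Lemma~\ref{O_0_generates} applies (guaranteeing $\cO_0$ split-generates $\overline{D_{sg}}(W^{-1}(0))$, which is what turns Lemma~\ref{Mf_for_O_0} into the equivalence of Corollary~\ref{Perf(B_W)}); once that is noted, the proof is complete.
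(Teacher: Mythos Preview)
Your proposal is correct and matches the paper's approach exactly: the paper simply says the corollary follows from Corollary~\ref{Perf(B_W)} and Proposition~\ref{A_equiv_B}. Your added verification that the origin is the only singular point of $W^{-1}(0)$ (the other critical points of $W$ have $W\ne 0$) is a useful detail the paper leaves implicit.
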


\section{Reconstruction theorem}
\label{sing_general}

The results of this section will not be used in the proof of main
theorem.

Here we show that one can recover the polynomial $W$ (up to formal
change of variables) from the $A_{\infty}\text{-}$structure on
$\Lambda(V)$ transferred from D$(\Z/2)\text{-}$G algebra $\cB_W,$
as in the previous section, for general $W.$ Our proof is based on
Kontsevich formality theorem, and on Keller's paper \cite{Ke1}.

More precisely, our setting is the following. Let $\mk$ be any
field of characteristic zero and $V=\mk^n,$ $n\geq 1.$ Consider a
polynomial $W=\sum\limits_{i=3}^r W_r\in \mk[V^{\vee}],$ with
$W_i\in \Sym^i(V^{\vee}).$ Take the D$(\Z/2)\text{-}$G algebra
$\cB_W.$ We have the canonical isomorphism of super-algebras
\begin{equation}\label{exterior}\Lambda(V)\cong H^{\cdot}(\cB_W).\end{equation}

\begin{theo}\label{reconstr}Let $W,$ $W'$ be non-zero polynomials as above. Suppose that DG algebras $\cB_{W}$ and $\cB_{W'}$ are quasi-isomorphic, and the chain of quasi-isomorphisms
connecting $\cB_{W}$ with $\cB_{W'}$ induces the identity in
cohomology via identifications \eqref{exterior}. Then $W'$ can be
obtained from $W$ by a formal change of variables of the form
\begin{equation}\label{var_change}z_i\to z_i+O(z^2).\end{equation}
\end{theo}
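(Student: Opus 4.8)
The plan is to reduce the statement to the classification of Maurer–Cartan elements under the Kontsevich formality equivalence, running the same machinery as in Section~\ref{classification_theo} but without the $G$-equivariance and the grading restrictions. First I would use the explicit Koszul-duality computation of Section~\ref{Koszul}: Homological Perturbation applied to $\cB_W$ (with $\cB_W^{gr}\cong B^{gr}$ and $\tilde\partial-\partial$ given by contraction against $\gamma=\sum_i dW_i/i$) produces a $(\Z/2)$-graded $A_\infty$-structure $\cA_W$ on $\Lambda(V)$ together with an $A_\infty$-quasi-isomorphism $\cA_W\to\cB_W$. Since the maps $f_1,p,i,h$ are canonical, a chain of quasi-isomorphisms $\cB_W\simeq\cB_{W'}$ inducing the identity on $H^\cdot=\Lambda(V)$ transports to an $A_\infty$-quasi-isomorphism $\cA_W\simeq\cA_{W'}$ which is the identity on cohomology; so it suffices to show that the $A_\infty$-quasi-isomorphism class of $\cA_W$ (with its identification $H^\cdot=\Lambda(V)$) determines $W$ up to a change of variables $z_i\mapsto z_i+O(z^2)$.

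Next I would package $\cA_W$ as a Maurer–Cartan element. By the discussion in Section~\ref{intro_to_formality}, an $A_\infty$-structure on $A=\Lambda(V)$ extending the wedge product with $\mu^1=0$ corresponds to an MC element $\alpha\in CC^1(A,A)$ in a suitable pro-nilpotent truncation of the Hochschild DG Lie algebra, and $A_\infty$-quasi-isomorphisms inducing the identity on cohomology correspond (by Lemma~\ref{quasi-inverse}-style arguments, i.e. the exponential gauge action of $CC^0$ with vanishing $\gamma^0$) exactly to the gauge-equivalence relation on MC elements. Applying the formality $L_\infty$-quasi-isomorphism $\Phi$ of Theorem~\ref{formality} and Lemma~\ref{bijection}, the equivalence class of $\alpha$ is in bijection with the equivalence class of the corresponding formal polyvector field $\wt\alpha=(\wt\alpha^0,\wt\alpha^2)\in\C[[V^\vee]]\otimes(\Lambda^0\oplus\Lambda^2)(V)$ under the gauge action by formal vector fields (value $0$ at the origin) and by $\Lambda^3$-vectors via \eqref{act_3_forms}. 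The content of the explicit tree computation, exactly as in Proposition~\ref{A_equiv_B} but keeping all graded pieces, is that $\wt\alpha^0=W$ (the superpotential reappears as the $\Lambda^0$-part) while $\wt\alpha^2$ is some Poisson bivector annihilating $dW$. Thus $\cB_W\simeq\cB_{W'}$ with identity on cohomology forces $(W,\wt\alpha^2)$ and $(W',\wt\alpha'^2)$ to be gauge-equivalent.

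Finally I would argue that this gauge equivalence is realized, up to the $\Lambda^3$-part which only modifies $\wt\alpha^2$ by $\iota_{dW}\gamma^3$ and hence can be used to kill the bivector after the change of variables, by a formal diffeomorphism carrying $W$ to $W'$; since $W,W'$ start in degree $\ge3$ (no linear or quadratic terms, as $\mu^1=0$ and $\mu^2$ is the strict product), the diffeomorphism must be tangent to the identity to first order, i.e. of the form $z_i\mapsto z_i+O(z^2)$, giving \eqref{var_change}. The main obstacle I anticipate is the bookkeeping in disentangling the bivector component: one must check that $\wt\alpha^2$ (which a priori need not vanish and lives in the Koszul complex with differential $\iota_{dW}$) can always be gauged away by a $\Lambda^3$-vector field \emph{without} disturbing the normalization $\wt\alpha^0=W$, and that the $\Lambda^0$-parts of two gauge-equivalent elements differ precisely by pullback along a formal diffeomorphism — this is where one needs $W\ne0$ together with finite determinacy of the Tjurina-type ideal, in the spirit of Lemma~\ref{equiv_to_(W,0)}, but now for general (not necessarily isolated-singularity, hence with possibly infinite-dimensional $\C[[V^\vee]]/I$) $W$. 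Handling the non-isolated case carefully — ensuring the relevant Koszul-type complex still computes enough to conclude, or restricting the gauge action appropriately — is the technical heart of the argument.
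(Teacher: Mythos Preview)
Your overall strategy is reasonable but diverges from the paper's proof at the crucial step, and the divergence is exactly where your argument has a gap.

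The paper does \emph{not} apply the formality $\Phi:CC^{\cdot}(\Lambda(V))\to \mk[[V^\vee]]\otimes\Lambda(V)$ of Theorem~\ref{formality} to the transferred structure $\alpha_W$ and then try to identify the $\Lambda^0$-part of $\Phi_*(\alpha_W)$ with $W$. Instead it introduces, via Keller's derived-invariance result, a third pro-nilpotent DGLA $Q$ built from the Hochschild complex of the two-object DG category with morphisms $\Lambda(V)$, $\mk[[V^\vee]]$, and the Koszul bimodule $X$ between them. The projections $p_1:Q\to\widetilde{\mh_1}$ (Hochschild of $\Lambda(V)$) and $p_2:Q\to\widetilde{\mh_2}$ (Hochschild of $\mk[[V^\vee]]$) are filtered quasi-isomorphisms, and Kontsevich formality $\cU:\widetilde{\mg}\to\widetilde{\mh_2}$ is applied on the \emph{polynomial} side. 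The point is that $\cU^k(W,\dots,W)=0$ for $k\ge 2$ (a function has no polyvector legs for the Kontsevich graphs), so $\cU_*(W)$ is literally $W$ sitting as a constant cochain. One then writes down by hand an MC element $\tilde\alpha\in Q^1$ whose components are the $\mu^i$, the HPL maps $f_j:\cA^{\otimes j}\otimes X\to X$, and $W$ itself; it projects to $\alpha_W$ under $p_1$ and to $\cU_*(W)$ under $p_2$. This exhibits the correspondence $W\leftrightarrow\alpha_W$ directly, with no computation of $\Phi_*(\alpha_W)$ and no finite-determinacy argument.

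Your approach, by contrast, rests on the assertion that $\widetilde\alpha^0=W$ after pushing $\alpha_W$ through the $\Lambda(V)$-formality. This is not what Proposition~\ref{A_equiv_B} shows: that proposition only evaluates $\Phi^1$ on two specific graded pieces $\mu^3_0$ and $\mu^{2g+1}_1$, whereas $\widetilde\alpha=\sum_{k\ge 1}\frac{1}{k!}\Phi^k(\alpha_W,\dots,\alpha_W)$ receives contributions from all $\Phi^k$ and all $\mu^d$; there is no grading argument here forcing the higher terms to avoid $\Lambda^0$. Indeed, the paper's own remark after the theorem says only that $W$ is recovered from $\beta^0$ \emph{up to a formal change of variables}, not on the nose, and that statement is obtained as a corollary of the Keller argument rather than by a tree computation. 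Moreover, the obstacle you correctly flag --- that for non-isolated $W$ the Koszul complex is not exact and one cannot gauge away the bivector --- does not arise in the paper's route at all, precisely because the bimodule construction never requires reducing to a pair $(W,0)$. So the missing idea is Keller's zig-zag through $Q$; without it, the identification of the $\Lambda^0$-part with $W$ and the handling of the bivector in the non-isolated case remain genuine gaps.
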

\begin{proof} We introduce four pro-nilpotent DG algebras. First
define the DGLA $\widetilde{\mg}$ by the formula
\begin{equation}\widetilde{\mg}^d=\prod_{\substack{j-2k=d+1 \\ k\in\Z,\, i\geq d+2}}(\Sym^i(V^{\vee})\otimes \Lambda^j(V))\cdot\hbar^k,
\end{equation}
and $L_r\widetilde{\mg}^d$ is the part of the product with $i\geq
d+1+r,$ $r\geq 1$ (the differential is zero, and the bracket is
Schouten one). Further, put
\begin{equation}\widetilde{\mh_1}^d=\prod_{\substack{i+j-2k=d+1 \\ k\in\Z,\, i\geq d+2}}(\Hom^j(\Lambda(V)^{\otimes i},\Lambda(V))\cdot\hbar^k,
\end{equation}
and $L_r\widetilde{\mh_1}^d$ is the part with $i\geq d+1+r$ (the
differential is Hochshild one and the bracket is Gerstenhaber
one). Now, take the "lower" grading on $\mk[[V^{\vee}]],$ with
$\mk[[V^{\vee}]]_d=\Sym^d(V^{\vee}).$ Of course, $\mk[[V^{\vee}]]$
is the {\it direct product} of its graded components, but {\it not
direct sum}. For the rest of this section we will denote the
standard grading by upper indices, and the "lower" grading by the
lower indices. Define the DGLA $\widetilde{\mh_2}$ by the formula
\begin{equation}\widetilde{\mh_2}^d=\prod_{\substack{i-2k=d+1 \\ k\in\Z,\, i\geq 0,\, j'+2k\geq 1.}}(\Hom_{j'}(\mk[[V^{\vee}]]^{\otimes i},\mk[[V^{\vee}]])\cdot\hbar^k,
\end{equation}
with $L_r\widetilde{\mh_2}^d$ being the part of the product with
$j'+2k\geq r.$

Now take the Koszul DG
$\mk[[V^{\vee}]]\text{-}\Lambda(V)\text{-}$bimodule
$X=\Lambda(V^{\vee})\otimes \mk[[V^{\vee}]]$ with the "upper" and
"lower" gradings
$X^{j}_{j'}=\Lambda^{-j}(V^{\vee})\otimes\Sym^{j'}(V^{\vee}),$ and
with differential $\iota_{\eta}$ of bidegree $(1,1).$ Define the
DGLA $Q$ by the formula
 \begin{equation}Q^d=\widetilde{\mh_1}^{d}\oplus \widetilde{\mh_2}^{d}\oplus \prod_{\substack{i_1+i_2+j-2k=d\\ 2k+j'-j\geq 1}}\Hom^j_{j'}
 (\Lambda(V)^{\otimes i_1}\otimes X\otimes \mk[[V^{\vee}]]^{\otimes i_2},X)\cdot\hbar^k,\end{equation}
where the differential and the bracket are induced by those in the
Hochshild complex of the DG category $\cC,$ where

- $Ob(\cC)=\{Y_1,Y_2\};$

- $\Hom_{\cC}(Y_1,Y_1)=\mk[[V^{\vee}]];$

- $\Hom_{\cC}(Y_2,Y_2)=\Lambda(V);$

- $\Hom_{\cC}(Y_1,Y_2)=X;$

- $\Hom_{\cC}(Y_2,Y_1)=0,$

Composition law in $\cC$ comes from the bimodule structure on $X$
(and from algebra structures on $\mk[[V^{\vee}]],$ $\Lambda(V)$).
Thus, the DGLA structure on $Q$ is defined. Further, define
\begin{equation}L_rQ^d=L_r\widetilde{\mh_1}^{d}\oplus L_r\widetilde{\mh_2}^{d}\oplus(\text{part of the product with }2k+j'-j\geq r),\quad r\geq 1.\end{equation}

It follows from \cite{Ke1}, Lemma in Subsection 4.5, that natural
projections $p_i:Q\to \widetilde{h_i},$ $i=1,2,$ are
quasi-isomorphisms of DGLA's. Moreover, both $p_1,p_2$ are
filtered quasi-isomorphisms, as it is straightforward to check.

According to \cite{Ko2}, one can attach to all Kontsevich
admissible graphs (relevant for the formality theorem) {\it
rational} weights, in such a way that they give formality
$L_{\infty}\text{-}$quasi-isomorphism (i.e. satisfy the relevant
system of quadratic equations). In this way we obtain filtered
$L_{\infty}\text{-}$quasi-isomorphism $\cU:\widetilde{\mg}\to
\widetilde{\mh_2}.$

Since $p_1,p_2,\cU$ are filtered
$L_{\infty}\text{-}$quasi-isomorphisms, we have by Lemma
\ref{bijection} that the composition $p_{1*}\circ
(p_{2*})^{-1}\circ \cU_*:MC(\widetilde{\mg})\to
MC(\widetilde{\mh_1})$ is a bijection between the sets of
equivalence classes of MC solutions in $\widetilde{\mg}$ and
$\widetilde{\mh_1}.$

To prove the theorem, we need to prove that, under the assumptions
of the theorem, MC equations $W,W'\in \widetilde{\mg}^1$ are
equivalent. Indeed, this means that $W'$ is the pullback of $W$
under the formal diffeomorphism of $V$ with zero differential at
the origin. Therefore, it suffices to prove the following

\begin{lemma} Under the above bijection between equivalence classes of MC solutions, the class of $W\in \widetilde{\mg}^1$ corresponds
to the class $\alpha\in \widetilde{\mh_1}^1$ of the
$(\Z/2)\text{-}$graded) $A_{\infty}\text{-}$structure on
$\Lambda(V)$ transferred from $\cB_W$ to $H^{\cdot}(\cB_W)\cong
\Lambda(V).$
\end{lemma}
\begin{proof} First note that $\cU^k(W,\dots,W)=0$ for $k>1,$ and
$\cU^1(W)$ has the only constant component which is equal to $W.$

Denote by $\mu=(\mu^3,\mu^4,\dots)$ the
$A_{\infty}\text{-}$structure on $\Lambda(V)\cong
H^{\cdot}(\cB_W)$ transferred from $\cB_W,$ as in the previous
section. Let $\cA$ be the resulting $A_{\infty}\text{-}$algebra.
Denote by $f=(f_1,f_2\dots)$ the
$A_{\infty}\text{-}$quasi-isomorphism $\cA\to\cB_W.$ Also denote
by $f_0\in \cB_W^1$ the multiplication by the $1\text{-}$form
$\gamma.$ We can consider $f_i$ as maps $f_i:A^{\otimes i}\otimes
X\to X.$ Now define $\tilde{\alpha}\in Q^1$ with components
$\mu^i,$ $i\geq 3,$ $f_j,$ $j\geq 0,$ and $W\in
\widetilde{\mh_2}^1.$ Then $\tilde{\alpha}$ is MC solution,
\begin{equation}p_1(\tilde{\alpha})=\alpha,\text{ and }
p_2(\tilde{\alpha})=\cU^1(W)=\sum\limits_{k\geq
1}(-1)^{\frac{k(k-1)}{2}}\frac1{k!}\cU^k(W,\dots,W).\end{equation} Thus, classes of MC
solutions $W\in \widetilde{\mg}^1$ and $\alpha\in
\widetilde{\mh}^1$ correspond to each other. Lemma is proved.
\end{proof}
Theorem is proved.
\end{proof}

It follows from the proof of the above Theorem that there exists
filtered $L_{\infty}\text{-}$morphism
$\tilde{\Phi}:\widetilde{\mh_1}\to\widetilde{\mg}$ such that the
polynomial $W$ can be reconstructed from $\cB_W$ as follows. Take
$\alpha\in \widetilde{\mh_1}^1$ to be MC solution corresponding to
the $A_{\infty}\text{-}$structure on $\Lambda(V)$ transferred from
$\cB_W.$ Put
\begin{equation}\beta=\sum\limits_{k\geq 1}(-1)^{\frac{k(k-1)}{2}}\frac1{k!}\tilde{\Phi}^k(\alpha,\dots,\alpha).\end{equation}
Decompose $\beta$ into the sum
$\beta^0+\beta^2+\dots+\beta^{2\left[\frac n2\right]},$ with
$\beta^{2j}\in \mk[[V^{\vee}]]\otimes \Lambda^{2j}(V).$ Then $W$
can be obtained from $\beta^0$ by a formal change of variables of
type \eqref{var_change}.

\begin{remark}Note that in Theorem \ref{reconstr} we required our polynomials $W, W'$ not to have terms of order $2,$ and also required the induced
isomorphism  $H^{\cdot}(\cB_W)\to H^{\cdot}(\cB_{W'})$ to be
compatible with identifications \eqref{exterior}. The reason is
that in general Maurer-Cartan theory for DGLA's works well only in
the pro-nilpotent case. However, it should be plausible that in
the case $\mk=\C$ one can drop these assumptions. Of course, in
this case one also should drop the requirement on the change of
variables to be of type \eqref{var_change}.\end{remark}

\section{Equivalence of two LG models}
\label{LG_model}

Take $V=\C^3$ and $K\subset G\subset SL(V)$ be as before. In this section we describe two different LG models,
such that the resulting categories are equivalent.

The first one is stacky: $(V//K,W),$ where $W$ is our superpotential. The associated category $\overline{D_{sg,K}}(W^{-1}(0))$ has already been described (Corollary \ref{A'*Z_{2g+1}}).

Now we describe another LG model, which is taken in the main theorem. There is a well-known crepant resolution of the
quotient $V/K:$
\begin{equation}X=\Hilb_{K}(V)\to V/K.\end{equation}

More explicitly, $X$ is toric by \cite{CR} and is given by the following fan. Take $N\subset \R^3,$
$N=\Z^3+\Z\cdot\frac1{2g+1}(1,1,2g-1).$ Now, if we take a fan
$\Sigma$ consisting of a positive octant and its faces, then we
have $X_{\Sigma}\cong V/K$. To describe $X,$ we should subdivide
the fan $\Sigma.$ Namely, take the fan $\Sigma'$ consisting of the
cones generated by
\begin{equation}(\frac1{2g+1}(k,k,2g+1-2k),\frac1{2g+1}(k+1,k+1,2g-1-2k),(1,0,0)),\quad
0\leq k\leq g-1;\end{equation}
\begin{equation}(\frac1{2g+1}(k,k,2g+1-2k),\frac1{2g+1}(k+1,k+1,2g-1-2k),(0,1,0)),\quad
0\leq k\leq g-1;\end{equation}
\begin{equation}(\frac1{2g+1}(g,g,1),(1,0,0),(0,1,0)),\end{equation} and all their faces (see Figure 3 for the case $g=3$). Then
$X\cong X_{\Sigma'}.$

\begin{center}\begin{tabular}{c}
{\epsfbox{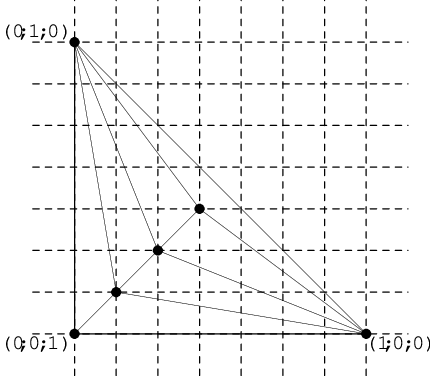}}\\\LARGE{Figure 3.}\end{tabular}\end{center}

The function $W\in\C[V^{\vee}]$ is $K\text{-}$invariant, hence
gives a function on $V/K,$ and on $X.$ The LG model $(X,W)$ is a
mirror to the genus $g$ curve. The only singular fiber of $W$ on
$X$ is $X_0=:H.$ The surface $H$ has $(g+1)$ irreducible
components $H_1,\dots,H_{g+1},$ where $H_i$ is defined below
for $1\leq i\leq g,$ and $H_{g+1}$ is the proper pre-image of
$W^{-1}(0)\subset V/K.$

The exceptional surface $H_k\subset X,$ $q\leq k\leq g,$ corresponding to the vector
$\frac1{2g+1}(k,k,2g+1-2k)\in N$ is

\begin{equation}\begin{cases}\text{the rational ruled surface }F_{2g+1-2k}\cong \PP_{\C\PP^1}(\cO\oplus\cO(2g+1-2k)) & \text{for }1\leq k\leq g-1\\
\C\PP^2 & \text{for }k=g.\end{cases}\end{equation}

The surfaces $H_i$ and $H_j$ have empty intersection if $|i-j|\geq
2.$ Further, the surfaces $H_i$ and $H_{i+1}$ intersect
transversally along the curve $C_i\subset X,$ where $1\leq i\leq
g-1.$ The curve $C_i$ is

\begin{equation}\begin{cases}\text{the "}\infty\text{-section" }
\PP_{\C\PP^1}(\cO(2g+1-2i))\subset
\PP_{\C\PP^1}(\cO\oplus\cO(2g+1-2i))\cong H_i\text{ on }H_i &
\text{for }1\leq i\leq g-1\\
\text{the "zero-section" }\PP_{\C\PP^1}(\cO)\subset
\PP_{\C\PP^1}(\cO\oplus\cO(2g-1-2i))\cong H_{i+1}\text{ on
}H_{i+1} &
\text{for }1\leq i\leq g-2\\
\text{the line on }\C\PP^2\cong H_g & \text{for }k=g.
\end{cases} \end{equation}

The divisor $H$ has simple normal crossings. We have already
described the intersections between $H_i$ for $1\leq i\leq g.$
Further, the intersection $H_i\cap H_{g+1}$ is:

\begin{equation}\begin{cases}\text{the section }\PP_{\C\PP^1}(\cO\times
(y_0y_1,y_0^{2g+1}+y_1^{2g+1}))\subset
\PP_{\C\PP^1}(\cO(2)\oplus\cO(2g+1))\cong H_1 & \text{for }i=1\\
\text{the union of two fibers
}\{y_0y_1=0\}\subset\PP_{\C\PP^1}(\cO\oplus\cO(2g+1-2i)) &
\text{for
}2\leq i\leq g-1\\
\text{a non-degenerate conic in }\C\PP^2\cong H_g & \text{for
}i=g.
\end{cases}
\end{equation}
Here $(y_0:y_1)$ are homogeneous coordinates on $\C\PP^1.$

The triple intersection $H_i\cap H_{i+1}\cap H_{g+1}$ consists of
two points for each $1\leq i\leq g-1.$ The corresponding dual CW
complex of this configuration is homeomorphic to $S^2.$

\begin{theo}\label{McKay} The triangulated category
$D_{sg}(H)$ is equivalent to $D_{sg,K}(W^{-1}(0)).$
\end{theo}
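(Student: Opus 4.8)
The plan is to invoke the categorical McKay correspondence for Landau--Ginzburg models, as established in \cite{BP, QV}, applied to our concrete situation $V = \C^3$ with the finite abelian group $K \subset SL(V)$ acting on $W^{-1}(0)$. Recall that $X = \Hilb_K(V) \to V/K$ is the crepant resolution described above via the fan $\Sigma'$ of \cite{CR}, and $H = W^{-1}(0)$ (viewed as a fiber of $W$ on $X$) is the preimage of $\bar H = W^{-1}(0)/K \subset V/K$. The first step is to verify that the hypotheses of the equivariant McKay correspondence for categories of singularities are met: namely that $K$ acts on $V$ preserving $W$, that $K \subset SL(V)$ so that the resolution is crepant, and that the derived McKay equivalence $D^b_K(V) \cong D^b(X)$ of \cite{BKR} is compatible with the superpotential $W$ in the sense that it carries the $K$-equivariant structure sheaf of the origin (or its twists) to the relevant objects on $X$. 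In dimension three with $K$ abelian, \cite{BKR} applies directly, and $X = \Hilb_K(V)$ is exactly the fine moduli space used there.

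Next I would pass from the derived categories of coherent sheaves to the categories of singularities. The point is that for a Landau--Ginzburg model $(X, W)$ the category $D_{sg}(W^{-1}(0))$ is the Verdier quotient $D^b_{coh}(W^{-1}(0))/\Perf(W^{-1}(0))$, and an equivalence of the ambient derived categories of $V$ and $X$ that intertwines the superpotentials descends to an equivalence of singularity categories of the zero fibers, because it carries perfect complexes to perfect complexes. In the equivariant setting, $D^b_K(V)$ with superpotential $W$ has its own singularity category $D_{sg,K}(W^{-1}(0))$, and the $K$-equivariant McKay functor from \cite{BKR}, being a Fourier--Mukai transform with a kernel supported appropriately, is $W$-linear and sends $\Perf$ to $\Perf$. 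Hence it induces an exact equivalence $D_{sg,K}(W^{-1}(0)) \cong D_{sg}(H)$. This is precisely the kind of statement proved in \cite{BP, QV}; I would cite their results in the form needed here, checking that our $X$, $K$, and $W$ fall within their scope (abelian $K$, crepant toric resolution, $W$ a $K$-invariant polynomial with the given singular locus), and I would remark that the explicit geometry of $H$ worked out above—its components $H_1, \dots, H_{g+1}$, their pairwise intersections $C_i$, the triple points, and the $S^2$ dual complex—matches what one expects from the exceptional locus of $X \to V/K$ together with the proper transform of $\bar H$.

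The main obstacle will be the compatibility of the McKay equivalence with the Landau--Ginzburg structure: the equivalence $D^b_K(V) \cong D^b(X)$ is abstract (a Fourier--Mukai transform), and one must check carefully that under it the multiplication-by-$W$ natural transformations correspond, equivalently that the kernel of the transform is supported on the fiber product over $V/K$ so that it is linear over $\C[V^\vee]^K = \O(V/K)$ and in particular over the subalgebra generated by $W$. For abelian $K$ in dimension three this is known—the Bridgeland--King--Reid kernel is the structure sheaf of the reduced fiber product $X \times_{V/K} V$, which is manifestly $\O(V/K)$-linear—so the argument goes through, but stating it cleanly requires care. A secondary point is that one should work with the idempotent-completed (or $(\Z/2)$-graded DG-enhanced) versions, i.e. with $\overline{D_{sg,K}}(W^{-1}(0))$ and $\overline{D_{sg}}(H)$, as in equation \eqref{A'*Z_{2g+1}} and Corollary \ref{Perf(B_W)}, so that the equivalence is compatible with the split-closures used in the main theorem; since Fourier--Mukai equivalences automatically respect idempotent completion, this is immediate once the non-completed equivalence is in hand.
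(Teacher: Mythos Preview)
Your approach is correct and essentially identical to the paper's: the paper's own proof consists of a single sentence citing \cite{BKR} and \cite{BP}, Theorem 1.1 (with \cite{QV}, Theorem 8.6 as an alternative reference), exactly as you propose. Your additional remarks about $W$-linearity of the Fourier--Mukai kernel and compatibility with idempotent completion are reasonable elaborations but go beyond what the paper actually writes down.
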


\begin{proof} This follows from \cite{BKR} and \cite{BP}, Theorem
1.1. Alternatively, Theorem is implied by \cite{QV}, Theorem 8.6.
\end{proof}

Denote by $\overline{D_{sg}}(H)$ the split-closure of the
triangulated category of singularities $D_{sg}(H).$

\begin{cor}\label{descr_of_D_sing} There is an equivalence $\overline{D_{sg}}(H)\cong \Perf(\C[K]\#\cA').$
\end{cor}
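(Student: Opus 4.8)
The plan is to chain together the equivalences already established in the excerpt. The final statement, Corollary \ref{descr_of_D_sing}, asserts $\overline{D_{sg}}(H)\cong \Perf(\C[K]\#\cA')$. The key inputs are: Theorem \ref{McKay}, which gives $D_{sg}(H)\simeq D_{sg,K}(W^{-1}(0))$ for our specific $W=-z_1z_2z_3+z_1^{2g+1}+z_2^{2g+1}+z_3^{2g+1}$; and the displayed equivalence \eqref{A'*Z_{2g+1}}, namely $\overline{D_{sg,K}}(W^{-1}(0))\sim \Perf(\C[K]\#\cA')$, which was obtained from the equivariant extension of Orlov's theorem together with Proposition \ref{A_equiv_B} and the identification of the endomorphism DG algebra of $\cO_0\otimes\C[K]$ with the smash product $\C[K]\#\cB_W\simeq \C[K]\#\cA'$.

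First I would observe that an equivalence of triangulated categories $D_{sg}(H)\simeq D_{sg,K}(W^{-1}(0))$ induces an equivalence on idempotent completions, so Theorem \ref{McKay} immediately yields $\overline{D_{sg}}(H)\cong \overline{D_{sg,K}}(W^{-1}(0))$. Then I would simply concatenate with \eqref{A'*Z_{2g+1}} to conclude $\overline{D_{sg}}(H)\cong \overline{D_{sg,K}}(W^{-1}(0))\sim \Perf(\C[K]\#\cA')$. Since $\Perf$ of a D$(\Z/2)$-G algebra is by construction idempotent-complete (it is the split-closure of the perfect modules), no further completion is needed on the right-hand side, and the corollary follows.

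The step I expect to require the most care is making sure that the equivariant version of Orlov's theorem and the Koszul-duality computation genuinely produce \eqref{A'*Z_{2g+1}} with the correct $(\Z/2)$-grading and $G$-action, i.e. that the split-generation statement (Lemma \ref{O_0_generates} in its $K$-equivariant form, applied to the projection of $\cO_0\otimes\C[K]$) is valid. Here one uses that $W$ has an isolated singularity at the origin, so that $\cO_0$ split-generates $\overline{D_{sg}}(W^{-1}(0))$, and this passes to the $K$-equivariant setting because $\cO_0\otimes\C[K]$ is the induced object; hence $\overline{D_{sg,K}}(W^{-1}(0))\cong\Perf$ of its endomorphism DG algebra, which is $\C[K]\#\cB_W$, quasi-isomorphic to $\C[K]\#\cA'$ by Proposition \ref{A_equiv_B}. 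Given all of this machinery is already in place in the preceding sections, the proof of the corollary is essentially a one-line composition, and I would present it as such.

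\begin{proof} By Theorem \ref{McKay}, $D_{sg}(H)$ is equivalent to $D_{sg,K}(W^{-1}(0))$ as triangulated categories, hence their idempotent completions are equivalent: $\overline{D_{sg}}(H)\cong\overline{D_{sg,K}}(W^{-1}(0))$. Combining this with the equivalence \eqref{A'*Z_{2g+1}} gives $\overline{D_{sg}}(H)\cong\Perf(\C[K]\#\cA')$, as claimed.\end{proof}
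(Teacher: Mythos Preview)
Your proof is correct and follows exactly the paper's own argument: the paper's proof simply says ``this follows from Theorem \ref{McKay} and the equivalence \eqref{A'*Z_{2g+1}},'' and your formal proof is precisely this composition, with the (correct) observation that the equivalence of Theorem \ref{McKay} passes to idempotent completions made explicit.
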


\begin{proof} This follows from Theorem \ref{McKay} and Corollary \ref{A'*Z_{2g+1}}.

\end{proof}

\section{Generalities on Fukaya categories}

This section is devoted to generalities on Fukaya $A_{\infty}$-categories of compact symplectic surfaces of genus $\geq 2.$
We follow \cite{Se}, Sections 6-10.

\subsection{The definition}

Let $M$ be a compact oriented surface of genus $g\geq 2$ with symplectic form $\omega.$ Denote by $\pi:S(TM)\to M$ the bundle of unit circles in the tangent bundle (it does not depend on the choice of Riemannian metric). Fix a $1\text{-}$form $\theta$ on $S(TM),$ such that
$d\theta=\pi^*\omega.$ In the definition of Fukaya $A_{\infty}$-category
$\cF(M),$ we need to fix the class of $\theta$ modulo exact
$1\text{-}$forms.

Consider some connected Lagrangian submanifold in $M,$ i.e. just a
connected closed curve $L\subset M.$ Denote by $\sigma:L\to S(TM)_{|L}$
the natural section, corresponding to some choice of orientation on $L.$
A curve $L$ is called balanced if $\int_L \sigma^*\theta=0.$
This property does not depend on the choice of orientation on $L.$
All contractible curves are not balanced. Further, if $L$ is not contractible,
then it is isotopic to some balanced curve $L'.$
Moreover, such $L'$ is unique up to Hamiltonian isotopy.

Fix some countable set $\cL$ of balanced curves in $M,$ such that

1) In each non-trivial isotopy class there is at least one curve from $\cL;$

2) Any two distinct curves in $\cL$ intersect transversally, and any three of them do not have common points.

The object of Fukaya $A_{\infty}$-category $\cF(M)$ are oriented balanced curves $L\in\cL,$
equipped with a {\it Spin} structure (there are only two {\it Spin} structures on a circle: trivial and non-trivial).

Now let $L_0,L_1$ be objects of $\cF(M),$ such that the underlying curves intersect transversally (i.e. are distinct).
We put
\begin{equation}\Hom_{\cF(M)}(L_0,L_1)=CF^{\cdot}(L_0,L_1)=
\bigoplus\limits_{x\in L_0\cap L_1}\C x.\end{equation}
The $\Z/2$-grading on $x\in L_0\cap L_1$ is even (resp. odd), if the local intersection number $L_0\cdot L_1$ at $x$ equals to $-1$ (resp. $1$).

Now we are going to  define the higher products in $\cF(M)$ on the transversal sequences.
Take objects $L_0,\dots,L_d$ in $\cF(M)$ (for some $d\geq 1$) with pairwise distinct underlying curves.
Choose some points $x_k\in M,$ defining basis elements in $CF^{\cdot}(L_{k-1},L_k),$ $1\leq k\leq d.$ Then we have
\begin{equation}\label{coefficients}\mu^d(x_d,\dots,x_1)=\sum\limits_{x_0\in L_0\cap L_d}m(x_0,x_1,\dots,x_d)x_0,\end{equation}
where $m(x_0,\dots,x_d)$ are integers defined in the following way.

Fix a complex structure on $M,$ which is compatible with the orientation induced by symplectic form.
Denote by $D$ the closed two-dimensional disk with standard complex structure. For given distinct points
$\zeta_0,\dots,\zeta_d\in \partial D,$ ordered anti-clockwise, denote by $\partial_i D$ the open part of the boundary between $\zeta_i$ and $\zeta_{i+1},$ where we put $\zeta_{i+d}:=\zeta_i.$ Consider holomorphic maps
$u:D\setminus\{\zeta_0,\dots,\zeta_d\}\to M$ (where $\zeta_0,\dots,\zeta_d$ depend on $u$), such that
$u(\partial_i D)\subset L_i$ for $0\leq i\leq d,$ and $u$ can be extended to a continuous map $D\to M,$ which sends $\zeta_k$ to
$x_k$ for $0\leq k\leq d.$
Further, two maps $u:D\setminus\{\zeta_0,\dots,\zeta_d\}\to M$ and $u':D\setminus\{\zeta_0',\dots,\zeta_d'\}\to M$
are called equivalent if $u=u'\circ\phi,$ where $\phi:D\to D$ is a holomorphic automorphism such that $\phi(\zeta_k')=\zeta_k.$
Each such $u$ has a virtual dimension. Denote by $\cM(x_0,\dots,x_d)$ the space of equivalence classes of the maps $u$ of virtual dimension zero.
Then each point of this moduli space is regular by \cite{Se2}, Lemma 13.2.
We define $m(x_0,\dots,x_d)$ as a sum of $\pm 1$ over all points
$u\in \cM(x_0,\dots,x_d),$ where the signs are defined as follows.

For each object $L$ of $\cF(M),$ such that {\it Spin} structure on the underlying curve is non-trivial,
we choose a point $\circ_L\in L,$ which is not the intersection point with any of the curves $\cL.$ We also fix a trivialization of this
{\it Spin} structure outside of $\circ_L.$ Note that each $u$ of virtual dimension zero is an immersion.
If the restriction of the map $u$ onto $\partial_i D$ is compatible with orientation on $L_i$ for $1\leq i\leq d,$
and the image of the boundary of $D$ does not contain any of the points $\circ_{L_i},$ then the sign with which $u$ contributes to $m(x_0,\dots,x_d),$
equals to $+1.$ Further, changing of orientation on one of the curves $L_i,$ $0< i< d,$ multiplies the sign by $(-1)^{|x_i|}.$
Changing of orientation on $L_d$ multiplies the sign by $(-1)^{|x_0|+|x_d|}.$
Also, the sign is multiplied by $(-1)^N,$
where $N$ is the number of boundary points on $D,$ which mapped to one of the points $\circ_{L_i}.$

According to \cite{Se}, the set $\cM(x_0,\dots,x_d)$ is finite, and so the definition of the coefficients $m(x_0,\dots,x_d)$
is correct.

Now consider the case when the objects $L_0,L_1\in Ob(\cF(M))$ have the same underlying curve $L.$ Fix a metric and a Morse function $f$ on $L,$ with a unique local minimum, and (hence) a unique local maximum, so that they both do not equal to the points of intersection with other curves in $\cL.$
Denote the local minimum (resp. maximum) by $e$ (resp. $q$).
We put
$$\Hom_{\cF(M)}(L_0,L_1)=CM^*(f)=\C\cdot e\oplus\C\cdot q$$
--- $\Z/2$-graded Morse space of the function $f.$ If {\it Spin} structures and orientations on $L_0$ and on $L_1$
are the same, then this is a complex with a zero differential, and the grading coincides with the Standard Morse one,
i.e. $e$ is an even morphism, and $q$ is an odd morphism. Further, if {\it Spin} structures are the same,
and orientations are different, then the parities are interchanged. Otherwise, if {\it Spin} structures are distinct, then the complex is acyclic.

Now let $L_0,\dots,L_d$ be objects of $\cF(M),$ for which any number of the underlying curves can coincide with each other.
Again, choose some basis elements $x_k\in\Hom(L_{k-1},L_k),$ $k=1,\dots,d,$ and $x_0\in\Hom(L_0,L_d).$
We want to define the integers $m(x_0,\dots,x_d)$ (which are coefficients as above) as a signed counting of points in some set $\cM(x_0,\dots,x_d).$
A point in this set is the following data.

First, this is a planar tree $T$ with $d+1$ semi-infinite edges, in which all the vertices have valency at least $3.$.
There must be fixed a bijection between the connected components of
$\R^2\setminus T$ and the set $\{L_0,\dots,L_d\},$ which is compatible with a cyclic order.
Moreover, it is required that any two connected components separated by some finite edge should correspond to objects with the same
underlying curve.

Second, for each vertex $v$ there must be given some points $\zeta_{0,v},\dots,\zeta_{|v|-1,v},$
on the boundary of $D$ (the numeration is anti-clockwise), and a holomorphic map
$u_v:D\setminus\{\zeta_{0,v},\dots,\zeta_{|v|-1,v}\}\to M,$ which maps
the boundary components to curves $L_i,$ corresponding to the connected components $\R^2\setminus T,$ whose closure contains the vertex $v.$
Again it is required that the map $u$ can be extended to a continuous map on the whole disk.

Further, for each finite edge we require the following. Suppose that it separates two areas,
corresponding to
$L_i$ and $L_j,$ where $i<j.$ Denote by $v_{\pm}$ its endpoints,
so that the pair of vectors $(v_+-v_-,W_{ij})$ is a positively oriented basis of $\R^2,$ where $W_{ij}$
is any vector which is a difference of some point in $j$-th area and some point in $i$-th area, and these points lie in different half-planes.
Further, denote by $\zeta_{i_{\pm},v_{\pm}}$ the corresponding points on the boundary of $D.$ Denote by $f_{ij}$ the
(fixed) Morse function on the corresponding Lagrangian curve. Then we require that the gradient flow of $f_{ij}$
maps (for some non-zero time) the point $u_{v_+}(\zeta_{i_+,v_+})$ to the point $u_{v_-}(\zeta_{i_-,v_-}).$

Finally, for a semi-infinite edge with endpoint $v$ and the corresponding boundary point $\zeta_{k,v}\in\partial D$ the following is required.
Denote by $x_i$ the corresponding basis element in the morphism space. If the curves corresponding to the areas separated by this edge, are distinct,
then $u_v(\zeta_{k,v})=x_i$ is the corresponding intersection point.
If they coincide and are equal to $L,$ then we require that
$$u_v(\zeta_{k,v})\in\begin{cases}W^u(x_i)\subset L, & \text{если }0<i\leq d;\\
W^s(x_0)\subset L, & \text{если }i=0.\end{cases}$$
Here for the point $x\in L$ we denote by $W^u(x)$ (resp. $W^s(x)$) the unstable (resp. stable) submanifold of $L$ with respect to the gradient flow of the Morse function.

Such a data (points $\zeta_{i,v}\in\partial D,$ maps $u_v$) has virtual dimension, and we define $\cM(x_0,\dots,x_d)$
as a set of data of virtual dimension zero.
It turns out that the moduli space is in general not regular, and in this case the definition should be modified in a suitable way (see discussion in \cite{Se}, Section 7). However,
we will need no modifications, except for the definition of the product $\mu^2$ on $\Hom_{\cF(M)}(L,L)$ for an object $L\in Ob(\cF(M)):$
$$\mu^2(e,e)=e,\quad \mu^2(q,e)=\mu^2(e,q)=-q,\quad\mu^2(q,q)=0.$$

We will define the signs in those cases in which we are interested in.
First note that the general definition becomes simpler if all the underlying curves $L_0,\dots,L_d$ are distinct except $L_{i-1}=L_i$ for some $0<i\leq d,$ or $L_0=L_d.$ In this case there is only one possible tree, and it has only one vertex and $d+1$ semi-infinite edges.
Then $m(x_0,\dots,x_d)$ is a signed count of holomorphic $d$-gons with sides on $L_j$ and with a marked point on one of the edges.
Now consider the examples which we need.

{\it Constant triangles.} Let $L_0\ne L_1.$
Then the constant triangle at any point $x\in L_0\cap L_1$
contributes to the products
\begin{equation}\mu^2(e,x),\mu^2(x,e):CF^{\cdot}(L_0,L_1)\to
CF^{\cdot}(L_0,L_1);\end{equation}
\begin{equation}\mu^2(x,x):CF^{\cdot}(L_1,L_0)\otimes CF^{\cdot}(L_0,L_1)\to
CF^{\cdot}(L_0,L_0).\end{equation}

Non-constant triangles do not contribute to these products, and taking signs into account we get
\begin{equation}\label{constsnt_1}\mu^2(x,e)=x,\quad \mu^2(e,x)=(-1)^{|x|}x,\quad
\mu^2(x,x)=(-1)^{|x|}q.\end{equation}

Analogously, we have
\begin{equation}\label{constant_2}\mu^2(e,e,)=e,\quad\mu^2(e,q)=-q,\quad\mu^2(q,e)=q,\quad \mu^2(q,q)=0.\end{equation}

{\it Non-constant triangles.}
Here we have to take {\it Spin} structures into account. Recall that for a curve $L\in\cL$ with a non-trivial {\it Spin} structure
we fix a generic point $\circ\ne
e,q,$ which does not coincide with any intersection point with any of the curves in $\cL.$

We have already considered the case when the underlying curves $L_0,\dots,L_d$ are pairwise distinct.

Another case in which we are interested in is when $L_0=L_d,$ and
the curves $L_0,\dots,L_{d-1}$ are pairwise distinct. Let $u\in
\cM(e,x_1,\dots,x_d).$ If the curves $L_1,\dots,L_d$
are oriented in accordance with the orientation of $\partial D$ (anti-clockwise), and the boundary of $u$ does not meet the points
 $\circ,$ then the corresponding sign equals to $+1.$
 Otherwise, changing of orientations and meeting with the points $\circ$ has the same affect on the sign as in the case when all the curves are distinct.

\subsection{Split-generators in Fukaya categories}

Suppose that $\cA$ is some $(\Z/2)\text{-}$graded
$A_{\infty}\text{-}$category with weak units, and $E\in
\Perf(\cA)$ is an object which split-generates $\Perf(\cA).$ Then
it is well-known that the natural $A_{\infty}\text{-}$functor
$\Hom(-,E):\Perf(\cA)\to \Perf(\End(E))$ is a quasi-equivaelence,
see \cite{Ke}.

Let $L_0,L_1$ be two objects of the Fukaya category $\cF(M),$ and
the Spin structure on $L_1$ is non-trivial. The Dehn twist
$\tau_{L_1}$ is a balanced symplectic automorphism of $M,$ hence
$\tau_{L_1}(L_0)$ is again balanced. According to \cite{Se} and
\cite{Se2}, we then have the following exact triangle in
$D^{\pi}\cF(M):$

\begin{equation}HF^{\cdot}(L_1,L_0)\otimes L_1\to L_0\to
\tau_{L_1}(L_0).\end{equation}

We will need the following two Lemmas from \cite{Se}, which we will use to prove that a given object is a generator of $D^{\pi}\cF(M).$

\begin{lemma}\label{L_i_give_L_0 }(\cite{Se}, Lemma 6.4) Let $L_1,\dots,L_r$ be objects of $\cF(M)$ whose
Spin structures are non-trivial. Suppose that $L_0$ is another
object, and $\tau_{L_r}\dots\tau_{L_1}(L_0)\cong L_0[1].$ Then
$L_0$ is split-generated by $L_1,\dots,L_r.$
\end{lemma}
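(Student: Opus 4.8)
The plan is to use the Dehn twist exact triangle repeatedly to express $\tau_{L_r}\cdots\tau_{L_1}(L_0)$ as an iterated cone built from $L_0$ and copies of $L_1,\dots,L_r$, and then to peel off $L_0$ using the hypothesis $\tau_{L_r}\cdots\tau_{L_1}(L_0)\cong L_0[1]$ together with the relation $\mu^2(q,q)=0$.

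First I would set $X_0=L_0$ and, for $1\le k\le r$, $X_k=\tau_{L_k}(X_{k-1})=\tau_{L_k}\cdots\tau_{L_1}(L_0)$. Since each $L_k$ carries a non-trivial Spin structure, the Dehn twist exact triangle applies to the pair $(L_k,X_{k-1})$ and gives, in $D^{\pi}\cF(M)$, an exact triangle
\begin{equation}
HF^{\cdot}(L_k,X_{k-1})\otimes L_k\longrightarrow X_{k-1}\overset{f_k}{\longrightarrow}X_k .
\end{equation}
As $HF^{\cdot}(L_k,X_{k-1})$ is finite-dimensional, its first term is a finite direct sum of shifts of $L_k$, hence lies in the thick (split-closed, triangulated) subcategory $\cT\subset D^{\pi}\cF(M)$ split-generated by $L_1,\dots,L_r$; therefore $\operatorname{Cone}(f_k)\in\cT$.

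Next, let $c\colon L_0\to X_r$ be the composite $f_r\circ\cdots\circ f_1$, composed with a chosen isomorphism $X_r\cong L_0[1]$, so that $c\in\Hom_{D^{\pi}\cF(M)}(L_0,L_0[1])$. Iterating the octahedral axiom, $\operatorname{Cone}(c)$ is a finite iterated extension of the $\operatorname{Cone}(f_k)$ up to shift, so $\operatorname{Cone}(c)\in\cT$. Now $\Hom_{D^{\pi}\cF(M)}(L_0,L_0[1])=HF^{1}(L_0,L_0)=\C\,q$, so $c=\lambda q$ for some $\lambda\in\C$; consequently, in $\Hom_{D^{\pi}\cF(M)}(L_0,L_0[2])=\Hom_{D^{\pi}\cF(M)}(L_0,L_0)$ we have $c[1]\circ c=\pm\lambda^{2}\mu^{2}(q,q)=0$ by the constant-triangle computation \eqref{constant_2} (here we also use that $[2]$ is the identity on the $(\Z/2)$-graded category $D^{\pi}\cF(M)$). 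Hence $\operatorname{Cone}(c[1]\circ c)=\operatorname{Cone}(0\colon L_0\to L_0)\cong L_0\oplus L_0[1]$; on the other hand, by the octahedral axiom applied to the composable pair $L_0\xrightarrow{c}L_0[1]\xrightarrow{c[1]}L_0[2]$, this cone lies in the triangulated subcategory generated by $\operatorname{Cone}(c)$ and $\operatorname{Cone}(c[1])$, hence in $\cT$. Since $\cT$ is closed under direct summands, $L_0\in\cT$, which is the assertion.

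The step I expect to be the crux is the last one: knowing only that $X_r\cong L_0[1]$ fits into an exact triangle over $L_0$ with third term in $\cT$ does not by itself express $L_0$ in terms of $\cT$ — one genuinely needs the extra input that the connecting morphism $c\colon L_0\to L_0[1]$ squares to zero, which comes from $q^{2}=\mu^{2}(q,q)=0$ in $HF^{\cdot}(L_0,L_0)$. This is exactly what lets $L_0\oplus L_0[1]$ be split off an object of $\cT$ and then $L_0$ itself recovered using idempotent completeness of $D^{\pi}\cF(M)$. Minor points to handle carefully are that the Dehn twist triangle is being invoked for the pairs $(L_k,X_{k-1})$ of genuine objects rather than only for $(L_1,L_0)$, and that the undetermined signs in $c[1]\circ c$ are harmless since $\mu^{2}(q,q)$ already vanishes.
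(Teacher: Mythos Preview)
Your argument is correct and is the standard one; the paper itself does not supply a proof but simply cites \cite{Se}, Lemma 6.1, and the argument there proceeds in exactly this way: iterate the Dehn twist triangle to obtain a morphism $c\colon L_0\to L_0[1]$ with $\operatorname{Cone}(c)\in\cT$, then use $\mu^2(q,q)=0$ and the octahedral axiom to conclude $L_0\oplus L_0[1]\in\cT$, hence $L_0\in\cT$ by idempotent completeness.
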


\begin{lemma}\label{L_i_generate}(\cite{Se}, Lemma 6.5) Let $L_1,\dots,L_r$ be objects of $\cF(M)$ whose Spin structures are non-trivial and which are such that
$\tau_{L_r}\dots\tau_{L_1}$ is isotopic to the identity. Then they
split-generate $D^{\pi}(\cF(M)).$\end{lemma}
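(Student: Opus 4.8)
The plan is to deduce this from Lemma~\ref{L_i_give_L_0 } (\cite{Se}, Lemma 6.1) by a short categorical trick, the point being to manufacture, out of the hypothesis $\tau_{L_r}\cdots\tau_{L_1}\simeq\mathrm{id}$, objects to which that lemma genuinely applies. Write $\cD\subseteq D^{\pi}\cF(M)$ for the split-closed triangulated subcategory split-generated by $L_1,\dots,L_r$; the claim is equivalent to $\cD=D^{\pi}\cF(M)$, i.e. to the statement that every object $X$ of $D^{\pi}\cF(M)$ lies in $\cD$. (The assumption that the Spin structures of the $L_i$ be non-trivial will enter only through the use of Lemma~\ref{L_i_give_L_0 }, for which it is needed.)

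The key observation is that, the Fukaya category being $(\Z/2)\text{-}$graded, the object $Y:=X\oplus X[1]$ satisfies $Y[1]=X[1]\oplus X[2]\cong X[1]\oplus X\cong Y$ for every $X$. Here are the steps I would carry out. First, since $\tau:=\tau_{L_r}\cdots\tau_{L_1}$ is isotopic to the identity, the autoequivalence of $D^{\pi}\cF(M)$ it induces is isomorphic to the identity functor (by functoriality of $\cF(M)$ under symplectic isotopies; and even if one only knew $\tau_{*}\cong[1]$, the argument below would be unaffected because $Y\cong Y[1]$). Second, combining this with the previous observation, for any $X$ the object $Y=X\oplus X[1]$ satisfies $\tau_{L_r}\cdots\tau_{L_1}(Y)=\tau_{*}(Y)\cong Y\cong Y[1]$, which is precisely the hypothesis of Lemma~\ref{L_i_give_L_0 } applied with $L_0$ replaced by $Y$. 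Third, that lemma then yields $Y\in\cD$. Fourth, since $X$ is a direct summand of $Y$ and $\cD$ is split-closed, $X\in\cD$; as $X$ was arbitrary this gives $\cD=D^{\pi}\cF(M)$, proving the assertion.

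The step I expect to need the most attention — though it should be only a modest point — is the application of Lemma~\ref{L_i_give_L_0 } to the object $Y=X\oplus X[1]$, which is a formal direct sum in the derived category rather than an honest Lagrangian curve. One must check that the proof of Lemma~\ref{L_i_give_L_0 } uses nothing about $L_0$ beyond its being an object of $D^{\pi}\cF(M)$: it rests only on the Dehn twist exact triangles $HF^{\cdot}(L_i,-)\otimes L_i\to\mathrm{id}\to\tau_{L_i}$ and on the attendant twist-functor formalism, so it should go through verbatim; if one had any doubt, the safe course is to reprove Lemma~\ref{L_i_give_L_0 } directly in the split-closed derived category. The remaining ingredients — the periodicity $Y\cong Y[1]$, the identification of $\tau$ with the identity autoequivalence, and the closure of $\cD$ under direct summands — are all immediate.
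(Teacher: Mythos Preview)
The paper does not supply its own proof of this lemma; it simply quotes \cite{Se}, Lemma~6.2. So there is nothing in the present paper to compare against beyond whatever Seidel does in that reference.

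Your reduction via $Y=X\oplus X[1]$ is a natural idea, and the $\Z/2$-periodicity observation $Y\cong Y[1]$ is correct. The difficulty is precisely where you anticipated it, and it is a genuine gap rather than a modest point: the proof of Lemma~\ref{L_i_give_L_0 } really does use that $L_0$ is an honest simple closed curve. In outline, the twist triangles give $\mathrm{Cone}(L_0\to\tau(L_0))\in\cD$, and iterating, $\mathrm{Cone}(L_0\to\tau^2(L_0))\in\cD$. Under the identifications $\tau^k(L_0)\cong L_0[k]$ the composite $L_0\to\tau(L_0)\to\tau^2(L_0)$ becomes a product of two odd endomorphisms of $L_0$. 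For a balanced circle one has $HF^*(L_0,L_0)\cong H^*(S^1)$, so any product of two odd classes vanishes; the composite is therefore zero, and its cone splits as $L_0\oplus L_0[1]\in\cD$, exhibiting $L_0$ as a direct summand. For your object $Y=X\oplus X[1]$ this step fails outright: the ``swap'' isomorphism $X\oplus X[1]\to X[1]\oplus X[2]\cong X[1]\oplus X$ is an \emph{invertible} odd endomorphism of $Y$, so products of odd endomorphisms of $Y$ need not vanish, and there is no reason for the iterated natural map $Y\to\tau^n(Y)$ ever to become zero. Thus the proof of Lemma~\ref{L_i_give_L_0 } does not transfer verbatim to $Y$, and the plan breaks down at exactly the step you flagged. (Note that it suffices to take $X$ an honest curve, since curves generate $D^\pi\cF(M)$; but even then $Y=X\oplus X[1]$ has the invertible odd swap, so the restriction does not help.)
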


\subsection{Additional $\Z$-gradings}

Since $M$ is not Calabi-Yau, the $(\Z/2)\text{-}$grading on $M$
cannot be improved to $\Z\text{-}$gradings. However, it turns out that one can still put some $\Z$-grading for some Lagrangians,
and then control the $\Z$-homogeneous components of higher products.

Fix a complex structure on $M.$
Take a meromorphic section $\eta^r$ of the line bundle $\omega^{\otimes r}$
$T^*M^{\otimes r}.$ Let $D$ be its divisor. For any oriented
$L\subset M\setminus \supp(D),$ our section $\eta^r$ gives a map
\begin{equation}\label{L_to_S^1}L\to S^1,\quad x\to
\frac{\eta^r(X^{\otimes r})}{|\eta^r(X^{\otimes
r})|},\end{equation} where $X$ is a tangent vector to $L$ at $x,$
which points in the positive direction.

We define an $\frac1r\text{-}$grading on $L$ as a lift $L\to \R$ of the map \eqref{L_to_S^1}.
Let $\cF(M,D)$ be a version of Fukaya category, with the only
difference that Lagrangian submanifolds $L$ should lie in
$M\setminus \supp(D),$ and to be equipped with
$\frac1r\text{-}$grading. In particular, we have full and faithful
$A_{\infty}\text{-}$functor $\cF(M,D)\to \cF(M).$

Suppose that two objects $L_0,L_1$ of $\cF(M,D)$ have only
transversal intersection. Then each $x\in L_0\cap L_1,$ is
equipped with an integer $i^r(x).$ Namely, let $\alpha\in (0,\pi)$
be the angle counted clockwise from $TL_{0,x}$ to $TL_{1,x}.$ Let
$\alpha_0(x),$ $\alpha_1(x)$ be the values of
$\frac1r\text{-}$gradings of $L_0$ and $L_1$ at $x$ respectively. Then

\begin{equation}i^r(x):=\frac{r\alpha+\alpha_1(x)-\alpha_0(x)}{\pi}.\end{equation}

If $r$ is odd, then $i^r(x)\text{ mod }2$ coincides with the value
of $(\Z/2)\text{-}$grading on $x.$ Further, if $L_0=L_1,$ then
$i^r(e)=0,$ $i^r(q)=r.$

Let $u\in \cM(x_0,\dots,x_d)$ be a perturbed
pseudo-holomorphic polygon of virtual dimension zero, hence contributing to the higher product. For each $z\in Supp(D),$
denote by $\deg(u,z)$ the multiplicity with which $u$ hints $z.$
Then it follows from
the index formula that
\begin{equation}\label{from_index_formula}i^r(x_0)-i^r(x_1)-\dots-i^r(x_d)=r(2-d)+2\sum\limits_{z\in
Supp(D)}\ord(\eta^r,z)\deg(u,z).\end{equation}

Now suppose that for all points $z\in \supp(D)$ the order
$\ord(\eta^r,z)$ is the same positive integer $m>0.$.
With respect to our $\Z\text{-}$gradings $i^r(x),$ the higher
operations $\mu^i$ will decompose into the sum
\begin{equation}\label{decomp_of_mu^i}\mu^i=\mu^i_0+\mu^i_1+\dots,\end{equation} where $\mu^i_k,$ $k\geq 0,$
are homogeneous maps of degree $r(2-d)+2mk.$ Note that in section \ref{classification_theo} we considered precisely these conditions
on gradings, with $r=3$ and $m=2g-2.$

\subsection{Fukaya categories of orbifolds}

Suppose that finite group $\Gamma$ acts on $M$ by holomorphic (with respect to the chosen complex structure) diffeomorphisms.
Take the quotient $\bar{M}:=M/\Gamma,$ and consider it as an orbifold. Denote by $\pi:M\to\bar{M}$ the projection,
and by $\bar{D}\subset\bar{M}$ the set of orbifold points. Suppose that the $2$-form $\omega$ on $M$ and $1$-form
$\theta$ on $S(TM)$ are equivariant with respect to $\Gamma.$

Let $\bar{L}\subset\bar{M}$ be an embedded closed curve with transversal self-intersections,
such that $\pi^{-1}(L)\subset M$ is a union of $|\Gamma|$ curves, which are in general position. Denote by $L\subset M$
one of these curves, and assume that all curves $g(L),$ $g\in\Gamma,$ are contained in our countable set $\cL.$
Further, suppose that $L$ is equipped with orientation, {\it Spin} structure, Riemannian metric and a Morse function $f.$
Then we have the same data on each of the curves $g(L),$ $g\in\Gamma.$

Define an $A_{\infty}$-algebra $\End(\bar{L}).$ On the level of super-vector spaces, we put: $$\End(\bar{L}):=CM^{\cdot}(f)\oplus\bigoplus\limits_{g\in\Gamma\setminus\{1\}}CF^{\cdot}(L,g(L)).$$
For convenience, denote the summands by direct summands by $\End^{g}(L),$ where $\End^1(L)$ is the summand $CM^{\cdot}(f).$
In other words, the basis of $\End(\bar{L})$ is formed by the generators of Morse complex, and the points of self-intersections of $\bar{L}.$
Moreover, each such point gives two basis elements: even and odd.

Now, an $A_{\infty}$-structure on $\End(\bar{L})$ is defined as follows. Let $\bar{x_0},\dots,\bar{x_d}$ be basis elements of $\End(\bar{L}),$ and let $x_i\in M$ be their lifts onto $M.$ Suppose that $\bar{x_i}\in\End^{g_i}(\bar{L}).$
If $g_0\ne g_1\dots g_d,$ then the corresponding coefficient $m(\bar{x_0},\dots,\bar{x_d})$ equals to zero.
Otherwise, we put
$$m(\bar{x_0},\dots,\bar{x_d}):=m(x_0,x_1,g_1(x_2),\dots,g_1\dots g_{d-1}(x_d)).$$
Higher products are then defined by the formula \eqref{coefficients}.

Now suppose that the group $\Gamma$ is abelian, and denote by $G$ its dual group $\Hom(\Gamma,\C^*).$
We have the action of $G$ on the super-vector space $\End(\bar{L}):$ the element $h\in G$ acts on the summand
$\End^g(\bar{L})$ as multiplication by $h(g).$ This action is compatible with $A_{\infty}$-structure, because
\begin{equation}\label{homogeneous_for_Gamma}\mu^d(CF^{\cdot}(\bar{L},\bar{L})^{\gamma_d}\otimes\dots\otimes
CF^{\cdot}(\bar{L},\bar{L})^{\gamma_1})\subset
CF^{\cdot}(\bar{L},\bar{L})^{\gamma_d\dots\gamma_1}.\end{equation}

Tautologically, we have an $A_{\infty}$-isomorphism
\begin{equation}\label{orbifold_Fukaya}\C[G]\# \End(\bar{L})\cong \bigoplus\limits_{g_1,g_2\in\Gamma}\Hom_{\cF(M)}(g_1(L),g_2(L)).\end{equation}

\section{Fukaya category of a genus $g\geq 3$ curve}

It is convenient to represent the genus $g\geq 3$ curve $M$ as a
$2\text{-}$fold covering of $\C\PP^1,$ branched at $(2g+2)$
points: $(2g+1)\text{-}$th roots of unity and $0.$ Take the curves
$L_1,\dots,L_{2g+1},$ which are preimages of intervals
$[\zeta^0,\zeta^2],$
$[\zeta^1,\zeta^3],\dots,[\zeta^{2g-1},\zeta^0],$
$[\zeta^{2g},\zeta^1]$ respectively, where $\zeta=\exp(\frac{2\pi
i}{2g+1}).$ The special case $g=3$ is shown in Figure 4.

\begin{center}\begin{tabular}{c}\epsfbox{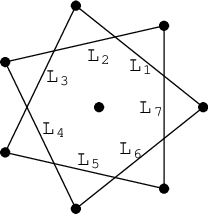}\\
\LARGE{Figure 4.}\end{tabular}\hfil\begin{tabular}{c}\epsfbox{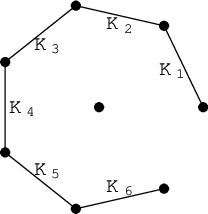}\\
\LARGE{Figure 5.}\end{tabular}\end{center}

\begin{lemma}\label{explicit_generator} The curves $L_1,\dots,L_{2g+1},$ equipped with
non-trivial Spin structures, split-generate $D^{\pi}\F(M).$
\end{lemma}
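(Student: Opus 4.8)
The plan is to apply Lemma \ref{L_i_generate}: it suffices to show that the product of Dehn twists $\tau_{L_{2g+1}}\cdots\tau_{L_1}$ is isotopic to the identity on $M$, after checking that the curves $L_i$ carry non-trivial spin structures (which is part of our choice). The curves $L_1,\dots,L_{2g+1}$ are the preimages under the double cover $M\to\C\PP^1$ of the chain of intervals $[\zeta^0,\zeta^2],[\zeta^1,\zeta^3],\dots,[\zeta^{2g},\zeta^1]$ connecting consecutive branch points among the $(2g+1)$-st roots of unity (the branch point $0$ plays a separate role). First I would observe that each $L_i$ is a simple closed curve on $M$, and that these curves form a necklace: $L_i$ and $L_{i+1}$ intersect once (transversally) for each $i$ mod $2g+1$, since the corresponding intervals share exactly one endpoint $\zeta^{i}$, while $L_i$ and $L_j$ are disjoint when $i,j$ are not adjacent in the cyclic order. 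This is precisely the configuration of a ``chain'' of curves closing up into a cycle of length $2g+1$ on the genus $g$ surface.

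The key step is then a classical fact about mapping class groups: for a cyclic chain of $2g+1$ simple closed curves $L_1,\dots,L_{2g+1}$ on the closed genus $g$ surface (pairwise disjoint except for consecutive ones meeting once, arranged in a circle), the product of the Dehn twists $\tau_{L_{2g+1}}\cdots\tau_{L_1}$ is isotopic to the identity. One clean way I would present this is via the hyperelliptic involution picture: the double cover $M\to\C\PP^1$ branched at the $2g+2$ points realizes $M$ as a hyperelliptic curve, each $\tau_{L_i}$ descends (or rather lifts) from a half-twist exchanging two adjacent branch points $\zeta^{i-1},\zeta^{i+1}$ on the base, and the composite $\tau_{L_{2g+1}}\cdots\tau_{L_1}$ lifts the braid on $\C\PP^1$ given by cyclically rotating the $2g+1$ roots of unity by one notch (a full rotation of the configuration of roots). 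Since rotating all $2g+1$ roots of unity simultaneously by $2\pi/(2g+1)$ is isotopic to the identity of $\C\PP^1$ fixing the branch locus setwise, and its lift to $M$ fixes the orientation and the hyperelliptic structure, this lift is isotopic to the identity on $M$. Tracking the lift carefully — using that each transposition of adjacent branch points lifts to exactly the Dehn twist about the corresponding $L_i$ (this is Seidel's setup in \cite{Se}, section 7, for $g=2$) — gives the isotopy $\tau_{L_{2g+1}}\cdots\tau_{L_1}\simeq\id_M$.

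The main obstacle is making the lifting argument precise: one must verify that the half-twist swapping $\zeta^{i-1}$ and $\zeta^{i+1}$ along the arc $[\zeta^{i-1},\zeta^{i+1}]$ lifts to the double cover as a single Dehn twist $\tau_{L_i}$ (rather than its square or a product), which requires checking the monodromy of the cover along that arc, and that the composite braid is indeed the cyclic rotation. An alternative, purely topological route — which I would mention as a fallback — is to use the chain relation in the mapping class group: for a chain of $k$ curves, $(\tau_{c_1}\cdots\tau_{c_k})^{k+1}$ equals a boundary twist, and for our closed-up cyclic chain of odd length $2g+1$ on the closed surface the relevant relation degenerates to $\tau_{L_{2g+1}}\cdots\tau_{L_1}\simeq\id$; this is the exact generalization of the $g=2$ computation in \cite{Se}. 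Either way, once the isotopy-to-identity is established, Lemma \ref{L_i_generate} immediately yields that $L_1,\dots,L_{2g+1}$ split-generate $D^\pi\F(M)$, completing the proof.
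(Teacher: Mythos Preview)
Your central claim---that $\tau_{L_{2g+1}}\cdots\tau_{L_1}$ is isotopic to the identity on $M$---is false, and the argument you sketch for it breaks down at several points.

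First, the intersection pattern is not what you describe. The arcs $[\zeta^{i-1},\zeta^{i+1}]$ and $[\zeta^{i},\zeta^{i+2}]$ do not share an endpoint; they cross once at an interior (non-branch) point of the disc, so their preimages $L_i$ and $L_{i+1}$ meet in \emph{two} points on $M$, not one. (It is $L_i$ and $L_{i+2}$ that share the branch point $\zeta^{i+1}$ and hence meet once.) These curves therefore do not form a chain in the sense needed for the chain relation you invoke as a fallback.

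Second, the product of half-twists on the base is not the cyclic rotation. Tracking the induced permutation of $\{\zeta^0,\dots,\zeta^{2g}\}$ for $g=3$ one finds $(0\;1)(2\;5\;3\;6\;4)$, not a $7$-cycle. Since this permutation is non-trivial, the corresponding mapping class of the $(2g+2)$-punctured sphere is non-trivial, and hence so is its lift to $M$. (Even had it been the rotation by $2\pi/(2g+1)$, that is not isotopic to the identity \emph{rel} the branch locus; its lift is the generator of the $\Sigma$-action on $M$, which is a non-trivial mapping class for $g\geq 2$.) So Lemma~\ref{L_i_generate} does not apply directly to the $L_i$.

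The paper proceeds differently, in two steps. It introduces auxiliary curves $K_1,\dots,K_{2g}$ (preimages of the short arcs $[\zeta^{j-1},\zeta^{j}]$), uses Matsumoto's relation $(\tau_{K_{2g}}\cdots\tau_{K_1})^{4g+2}\simeq\id$ together with Lemma~\ref{L_i_generate} to conclude that the $K_j$ split-generate, and then checks that $\tau_{L_{2g+1}}\cdots\tau_{L_1}(K_j)\cong K_j[1]$ so that each $K_j$ is split-generated by the $L_i$ via Lemma~\ref{L_i_give_L_0 }. Note the shift $[1]$ here: this is consistent with the composite \emph{not} being the identity---indeed the permutation computed above swaps $\zeta^0$ and $\zeta^1$, the endpoints of the arc underlying $K_1$, so $K_1$ is sent back to itself with reversed orientation.
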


\begin{proof} Take the curves $K_1,\dots,K_{2g},$ which are
preimages of intervals $[\zeta^0,\zeta^1],
[\zeta^1,\zeta^2],\dots,[\zeta^{2g-1},\zeta^{2g}]$ respectively
(the special case $g=3$ is illustrated in Figure 5). Then by
\cite{Ma} we have $(\tau_{K_{2g}}\dots\tau_{K_1})^{4g+2}\sim \id.$
From Lemma \ref{L_i_generate}, it follows that the curves
$K_1,\dots,K_{2g},$ equipped with non-trivial spin structures,
split-generate $D^{\pi}\F(M).$ Further, it is straightforward to
check that $\tau_{L_{2g+1}}\dots\tau_{L_1}(K_1)$ is isotopic to
$K_1[1].$ Thus, it follows from Lemma \ref{L_i_give_L_0 } that
$K_1$ is split-generated by $L_1,\dots,L_{2g+1}.$ Analogously, all
the other $K_i$ are split-generated by $L_1,\dots,L_{2g+1}.$

Hence, $L_1,\dots,L_{2g+1}$ split-generate
$D^{\pi}\F(M).$\end{proof}

We now compute partially the $A_{\infty}\text{-}$algebra
$\bigoplus\limits_{1\leq i,j\leq 2g+1}CF^{\cdot}(L_i,L_j).$ Our
computation is in fact analogous to the computations in \cite{Se},
Section 10.

Take a natural $\Sigma=\Z/(2g+1)\text{-}$action on $M$ which lifts
the rotational action on $\C\PP^1.$ The quotient $M/\Sigma$ is a
sphere $\bar{M}$ with $3$ orbifold points. Denote the set of
orbifold points by $\bar{D}.$

Explicitly, the hyperelliptic curve $M$ is given (in affine chart)
by the equation

\begin{equation}y^2=z(z^{2g+1}-1).
\end{equation}
The generator of $\Sigma$ acts by the formula
\begin{equation}(y,z)\to (\zeta^{g+1}y,\zeta z).\end{equation}

We have that $\C(M)^{\Sigma}\cong \C(\frac{y}{z^{g+1}}),$ hence
$t=\frac{y}{z^{g+1}}$ is a coordinate on an affine chart
$\C\subset \C\PP^1\cong\overline{M}.$ The set $\bar{D}$ consists
of the points $t=1,$ $t=-1,$ and $t=\infty.$

Each of the curves $L_i$ projects to the same curve
$\bar{L}\subset \bar{M}.$ It lies in $\C\setminus\{-1,1\}\subset
\bar{M}$ and has the same isotopy type for all $g\geq 3.$ The case
$g=3$ is shown in Figure 6. We have natural
$A_{\infty}\text{-}$isomorphism, as in
\eqref{orbifold_Fukaya}:
\begin{equation}\bigoplus_{1\leq i,j\leq
2g+1}CF^{\cdot}(L_i,L_j)\cong \C[K]\#
CF^{\cdot}(\bar{L},\bar{L}),\end{equation}
where $K=\Hom(\Sigma,\C^*.)$ This is actually the same $K$ as in the end of section \ref{Koszul}.

The super vector space $CF^{\cdot}(\bar{L},\bar{L})$ has $8$
generators: two standard $e$ (even) and $q$ (odd), together with
three pairs ($\bar{x_i}$ (even),$x_i$ (odd)), $1\leq i\leq 3,$
coming from each self-intersection point of $\bar{L}$ (see Figure
6). Take $\widetilde{\Gamma}=\pi_1^{orb}(\bar{M}),$ and put
$\Gamma=[\widetilde{\Gamma},\widetilde{\Gamma}].$ Then $\Gamma$ is
naturally the quotient of $(\Z/(2g+1))^3$ by the diagonal subgroup
$\Z/(2g+1).$ The class of our immersed curve $\bar{L}$ in $\Gamma$
is trivial, hence the generators of $CF^{\cdot}(L,L)$ are labelled
by the weights which are elements of $\Gamma.$

Further, take a meromorphic section $\eta^3$ of
$(T^*\bar{M})^{\otimes 3},$ having double pole at each point of
$\bar{D}.$ Explicitly,
\begin{equation}\eta^3=\frac{(dt)^{\otimes 3}}{(t-1)^2(t+1)^2}.\end{equation}
Each generator of $CF^{\cdot}(\bar{L},\bar{L})$ is equipped with
additional integer grading, together with weight in $\Gamma:$

\begin{equation}
\begin{aligned}
&
\begin{array}{l|l|l|l|l}
\text{generator} & e & x_1 & x_2 & x_3 \\ \hline
\text{weight} & (0,0,0) & (1,0,0) & (0,1,0) & (0,0,1) \\
\text{index} & 0 & 1 & 1 & 1
\end{array} \\
&
\begin{array}{l|l|l|l|l}
\text{generator} & \bar{x}_1 & \bar{x}_2 & \bar{x}_3 & q \\ \hline
\text{weight} & (0,1,1) & (1,0,1) & (1,1,0) & (1,1,1) \\
& = (-1,0,0) & = (0,-1,0) & = (0,0,-1) & = (0,0,0) \\
\text{index} & 2 & 2 & 2 & 3
\end{array}
\end{aligned}
\end{equation}

\begin{center}\begin{tabular}{c}\epsfbox{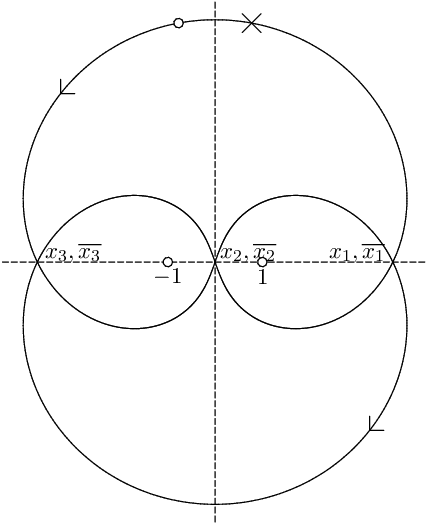}\\\LARGE{Figure 6.}\end{tabular}\end{center}

Since the $A_{\infty}\text{-}$structure is homogeneous with
respect to $\Gamma$ by \eqref{homogeneous_for_Gamma} we have that
$\mu^1=0.$

Further, the inverse image of $\eta^3$ on $M$ has three poles of
order $(2g-2).$ Therefore, according to \eqref{decomp_of_mu^i}, we
have a decomposition $\mu^i=\mu^i_0+\mu^i_1+\dots,$ where
$\mu^i_k$ has degree $6-3i+(4g-4)k.$

For degree reasons, $\mu^2_k$ vanishes for $k>0.$ Further,
according to \eqref{constsnt_1}, \eqref{constant_2}, we have
\begin{multline}\mu^2(x_i,e)=x_i=-\mu^2(e,x_i),\quad\mu^2(\bar{x_i},e)=\bar{x_i}=\mu^2(e,\bar{x_i}),\quad
\mu^2(q,e)=q=-\mu^2(e,q),\\
\mu^2(q,q)=0,\quad\mu^2(x_i,\bar{x_i})=q=-\mu^2(\bar{x_i},x_i).
\end{multline}

%\begin{center} \epsfbox{F6.eps}\end{center}

Further, there are only six (taking into account the ordering of
the vertices) non-constant triangles which avoid $\bar{D}.$ To
determine the sign of their contributions, choose generic points
$\circ$ on $\bar{L},$ as in Figure 6 (where $*$ denotes the point $e$). Then we have

\begin{multline}\mu^2(x_1,x_2)=\bar{x_3}=-\mu^2(x_2,x_1);\\
\mu^2(x_2,x_3)=\bar{x_1}=-\mu^2(x_3,x_2);\\
\mu^2(x_3,x_1)=\bar{x_2}=-\mu^2(x_1,x_3).\end{multline}

Further, one of the triangles (passing through $*$) can be thought
as a four-pointed disc with one of the vertex being $*.$ It gives
contribution to
\begin{equation} \mu_0^3(x_3,x_2,x_1)=-e.
\end{equation}

Further, $\mu^3_0(x_{i_1},x_{i_2},x_{i_1})=0$ for
$(i_1,i_2,i_3)\ne (3,2,1),$ since such an expression is a multiple
of $e$ (for degree reasons), and all the relevant spaces
$\bar{\cM}(e,x_{i_1},x_{i_2},x_{i_3})$ are empty.

%\begin{center} \epsfbox{F6.eps}\end{center}

There are six holomorphic $(2g+1)\text{-}$gons in our picture.
Namely, each point $x_i\in \bar{L}$ breaks the curve $\bar{L}$
into two loops $\gamma',\gamma''$. Choose the orientations on them
in such a way that they go anti-clockwise around the corresponding
orbifold point $t_{\gamma'}=t_{\gamma''}.$. Then for each such
loop $\gamma_j$ we have a bi-holomorphic map $v_j:S\to \bar{M},$
where $S$ is a $1\text{-}$pointed disk. The image of $v_j$ is the
area bounded by $\gamma_j$ and containing the orbifold point
$t_{\gamma_j}.$ Also require $v_j$ to map the center of $S$ to
$t_{\gamma_j}$ and the marked point to the corresponding $x_i.$
Further, define $u_j$ to be the composition of $v_j$ with the map
$z\to z^{2g+1}.$ Then $u_j$ maps the $(2g+1)\text{-}$th roots of
unity to $x_i.$

Further, each $u_j$ hits exactly one of the points of $\bar{D}$
and has $(2g+1)\text{-}$fold ramification there, and no
ramification elsewhere, which means that it lifts to a genuine
immersed $(2g+1)\text{-}$gon in $M$. We take the three
$(2g+1)\text{-}$gons that go through $*$, and determine their
contributions to $\mu^{2g+1}_1,$ namely:

\begin{equation}\mu_1^{2g+1}(x_i,\dots,x_i)=e.\end{equation}

Now identify $CF^{\cdot}(\bar{L},\bar{L})$ with $\Lambda(V),$ $V=\C^3,$ mapping $e$ to $1,$
$x_i$ to $\xi_i,$ $\bar{x_1}$ to $\xi_2\wedge \xi_3$ and
analogously for other $\bar{x_i},$ and $q$ to
$-\xi_1\wedge\xi_2\wedge\xi_3.$ Then, it follows from the above
computations and Theorem \ref{classification_theo1} that the
resulting $A_{\infty}\text{-}$structure on $\Lambda(V)$ is
$G\cong\Hom(\Gamma,\C^*)\text{-}$equivariantly
$A_{\infty}\text{-}$isomorphic to $\cA'$ from the end of the
section \ref{classification_theo}. The covering $M\to \bar{M}$ is
classified by the surjective homomorphism $\Gamma\to \Sigma,$
which is dual to the inclusion $K\subset G.$ Combining this with
Lemma \ref{explicit_generator} and \eqref{orbifold_Fukaya}, we
obtain the following

\begin{cor}\label{descr_of_DFuk} We have an equivalence $D^{\pi}\cF(M)\cong \Perf(\C[K]\#\cA').$
\end{cor}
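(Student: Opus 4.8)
The plan is to assemble the corollary from three ingredients that have already been put in place: the split-generation statement of Lemma~\ref{explicit_generator}, the orbifold reduction \eqref{orbifold_Fukaya}, and the classification of $A_\infty$-structures on $\Lambda(V)$ from Theorem~\ref{classification_theo1}. There is essentially no new geometry to do; the work is bookkeeping, and the one place that genuinely needs care is matching the two smash products at the end.

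\textbf{Step 1 (reduce to the endomorphism algebra of the $L_i$).} First I would set $E=L_1\oplus\cdots\oplus L_{2g+1}\in D^{\pi}\cF(M)=\Perf(\cF(M))$. By Lemma~\ref{explicit_generator}, $E$ split-generates $D^{\pi}\cF(M)$, so the general principle recalled at the start of the section on split-generators (Keller, \cite{Ke}) gives a quasi-equivalence $\Hom(-,E)\colon\Perf(\cF(M))\to\Perf(\End(E))$. Since $\End(E)$ is a model for the $A_\infty$-algebra $\bigoplus_{1\le i,j\le 2g+1}CF^{\cdot}(L_i,L_j)$ computed in this section, we get $D^{\pi}\cF(M)\cong\Perf\bigl(\bigoplus_{i,j}CF^{\cdot}(L_i,L_j)\bigr)$, and it remains only to identify that $A_\infty$-algebra with $\C[K]\#\cA'$ up to quasi-isomorphism.

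\textbf{Step 2 (orbifold reduction and the Floer computation).} Next I would invoke the $\Sigma$-equivariant $A_\infty$-isomorphism $\bigoplus_{i,j}CF^{\cdot}(L_i,L_j)\cong\C[\Sigma]\#CF^{\cdot}(\bar L,\bar L)$, which is the instance of \eqref{orbifold_Fukaya} attached to the cyclic cover $M\to\bar M=M/\Sigma$ (the $L_i$ being exactly the lifts of the immersed curve $\bar L$). Under the identification $CF^{\cdot}(\bar L,\bar L)\cong\Lambda(V)$ fixed in the text (with $e\mapsto 1$, $x_i\mapsto\xi_i$, $\bar x_i\mapsto\xi_j\wedge\xi_k$, $q\mapsto-\xi_1\wedge\xi_2\wedge\xi_3$), the homogeneity of the products under the $\Gamma$-weights forces $\mu^1=0$, the grading coming from $\eta^3$ forces the decomposition $\mu^i=\mu^i_0+\mu^i_1+\cdots$, and the explicit low-order counts give $\mu^2=\wedge$, $\mu^3_0(z,z,z)=-z_1z_2z_3$, and $\mu^{2g+1}_1(z,\dots,z)=z_1^{2g+1}+z_2^{2g+1}+z_3^{2g+1}$. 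These are precisely the normalizations appearing in the reformulation of Theorem~\ref{classification_theo1}, so that $G$-equivariant $A_\infty$-structure lies in the class $\cA'$; in particular $CF^{\cdot}(\bar L,\bar L)$ is $G$-equivariantly $A_\infty$-isomorphic to $\cA'$, and hence $\C[\Sigma]\#CF^{\cdot}(\bar L,\bar L)\cong\C[\Sigma]\#\cA'$.

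\textbf{Step 3 (match the smash products) and the expected obstacle.} Finally I would identify $\C[\Sigma]\#\cA'$ with $\C[K]\#\cA'$. The cover $M\to\bar M$ is classified by the surjection $\Gamma\to\Sigma$ that is Pontryagin-dual to the inclusion $K\subset G\cong\Hom(\Gamma,\C^*)$; dualizing again identifies $\Sigma$ with $\Hom(K,\C^*)$ and $K$ with $\Hom(\Sigma,\C^*)$. Under this identification the $\Sigma$-weights through which the smash product $\C[\Sigma]\#(-)$ in \eqref{orbifold_Fukaya} is formed correspond exactly to the restriction to $K\subset G$ of the $G$-action on $\Lambda(V)$, so $\C[\Sigma]\#\cA'\cong\C[K]\#\cA'$ as $A_\infty$-algebras; applying $\Perf$ gives $D^{\pi}\cF(M)\cong\Perf(\C[K]\#\cA')$. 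The delicate point — and the only one that requires attention — is this last matching: keeping the several cyclic groups of order $2g+1$ and their character groups straight, and verifying that the $\Sigma$-equivariant structure supplied by \eqref{orbifold_Fukaya} really does become, after dualization, the $K$-action used to form $\C[K]\#\cA'$ (equivalently, the one entering \eqref{A'*Z_{2g+1}}). Everything else is a direct citation of Lemma~\ref{explicit_generator}, \eqref{orbifold_Fukaya}, Theorem~\ref{classification_theo1}, and \cite{Ke}.
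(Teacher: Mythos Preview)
Your proposal is correct and follows essentially the same route as the paper: the paper's argument (given in the paragraph immediately preceding the corollary) cites exactly Lemma~\ref{explicit_generator}, the orbifold smash-product isomorphism \eqref{orbifold_Fukaya}, and Theorem~\ref{classification_theo1}, together with the observation that the surjection $\Gamma\to\Sigma$ classifying $M\to\bar M$ is Pontryagin-dual to the inclusion $K\subset G$. Your Step~3 correctly isolates the only nontrivial identification, and your discussion of it matches the paper's one-line treatment.
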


Corollaries \ref{descr_of_D_sing} and \ref{descr_of_DFuk} imply
the main

\begin{theo} There is an equivalence $\overline{D_{sg}}(H)\cong D^{\pi}\cF(M).$
\end{theo}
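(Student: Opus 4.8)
The plan is simply to compose the two equivalences already in hand. Corollary~\ref{descr_of_D_sing} gives an equivalence of triangulated categories $\overline{D_{sg}}(H)\cong \Perf(\C[K]\#\cA')$, and Corollary~\ref{descr_of_DFuk} gives $D^{\pi}\cF(M)\cong \Perf(\C[K]\#\cA')$. Concatenating these yields the desired equivalence $\overline{D_{sg}}(H)\cong D^{\pi}\cF(M)$, and there is nothing else to do once the two corollaries are granted. (One should only note that both sides are honestly split-closed, so that no extra Karoubi-completion step is needed when passing through the common model.)

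It is worth recording why the same algebra $\C[K]\#\cA'$ governs both sides, since this is where all the real work sits. On the Landau--Ginzburg side, Theorem~\ref{McKay} identifies $D_{sg}(H)$ with $D_{sg,K}(W^{-1}(0))$; in the latter the image of $\cO_0\otimes\C[K]$ is a split-generator whose endomorphism DG algebra is the smash product $\C[K]\#\cB_W$, and Proposition~\ref{A_equiv_B} identifies the $A_{\infty}$-structure transferred from $\cB_W$ to $\Lambda(V)$ with $\cA'$, giving the equivalence~\eqref{A'*Z_{2g+1}}. On the Fukaya side, Lemma~\ref{explicit_generator} exhibits $L_1,\dots,L_{2g+1}$ as a split-generating collection of $D^{\pi}\cF(M)$, the orbifold isomorphism~\eqref{orbifold_Fukaya} presents the total Floer algebra as $\C[K]\#CF^{\cdot}(\bar L,\bar L)$, and the explicit polygon count of the preceding section together with the uniqueness in Theorem~\ref{classification_theo1} identifies $CF^{\cdot}(\bar L,\bar L)$ with $\cA'$. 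After that, Keller's theorem that $\Hom(-,E):\Perf(\cA)\to\Perf(\End(E))$ is a quasi-equivalence for a split-generator $E$ converts both identifications into the two displayed equivalences.

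The genuine obstacle, then, is not in this closing step but in what feeds the two corollaries: most importantly the classification result Theorem~\ref{classification_theo1}, which guarantees that the $G$-equivariant $A_{\infty}$-structure on $\Lambda(V)$ is determined up to equivalence by the leading products $\mu^3_0$ and $\mu^{2g+1}_1$, so that matching just those two products on the Floer side and on the matrix-factorization side is enough. Granting Theorem~\ref{classification_theo1} and the McKay-type equivalence of Theorem~\ref{McKay}, the proof of the main theorem is the one-line composition above.
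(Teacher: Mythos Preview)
Your proposal is correct and matches the paper's own argument exactly: the theorem is deduced by concatenating Corollary~\ref{descr_of_D_sing} and Corollary~\ref{descr_of_DFuk}, both of which identify their respective sides with $\Perf(\C[K]\#\cA')$. Your second and third paragraphs are a faithful (and somewhat more detailed) summary of how those corollaries were obtained earlier in the paper, so there is nothing to add or correct.
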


\section{Appendix}

Here we prove the statement of Lemma \ref{bijection}. It is in
fact standard, but we could not find a reference. Fix some basic
field $\mk$ of characteristic zero.

In fact one can define, following Getzler \cite{Ge} a simplicial
set $\cM\cC_{\bullet}(\mg)$ for any pro-nilpotent
$L_{\infty}\text{-}$algebra, such that
$\pi_0(|\cM\cC_{\bullet}(\mg)|)$ is the set of equivalence classes
of MC solutions in $\mg.$ Further, one can prove that filtered
$L_{\infty}\text{-}$quasi-isomorphism $\Phi:\mg\to\mh$ induces a
homotopy equivalence of these simplicial sets. However, we prove
in this Appendix precisely what we need.

Let $\mg$ be a nilpotent (in the standard sense) DG Lie algebra.
Denote by $MC(\mg)$ the set of MC solutions. We have the nilpotent
group $\exp(\mg^0),$ which acts on $MC(\mg)$ as it is described in
Section \ref{MC_general}.

Now let $\mh\subset\mg$ be a DG ideal such that $[\mg,\mh]=0.$ We
have natural maps $\pi:\mg\to\mg/\mh$ and $\pi_*:MC(\mg)\to
MC(\mg/\mh).$ Then one has the following obstruction theory.

\begin{prop}\label{obstr_theory}1) There is a natural map $o_2:MC(\mg/\mh)\to H^2(\mh)$ satisfying the following property: if $\alpha\in MC(\mg/\mh),$
then the following are equivalent:

(i) The set $\pi_*^{-1}(\alpha)$ is non-empty.

(ii) $o_2(\alpha)=0.$

Moreover, if $\alpha,\beta\in MC(\mg/\mh)$ are equivalent then
$o_2(\alpha)=0$ iff $o_2(\beta)=0.$

2) Suppose that $\alpha\in MC(\mg/\mh)$ is such that the set
$(\pi_*)^{-1}(\alpha)$ is not empty. Then there is a natural
simply transitive $Z^1(\mh)$-action on the set
$(\pi_*)^{-1}(\alpha).$

3) Let $\alpha,\beta\in MC(\mg/\mh)$ and $X\in (\mg/\mh)^0$ be
such that $\exp(X)(\alpha)=\beta.$ Suppose that the set
$(\pi_*)^{-1}(\alpha)$ (and hence also $(\pi_*)^{-1}(\beta)$) is
non-empty. Take a $Z^1(\mh)\text{-}$action on
$(\pi_*)^{-1}(\beta)$ as in 2) and on $(\pi_*)^{-1}(\alpha)$
inverse to the action in 2). Then there exists a natural
$Z^1(\mh)\text{-}$equivariant map
\begin{equation}o_1^X:(\pi_*)^{-1}(\alpha)\times (\pi_*)^{-1}(\beta)\to H^1(\mh)\end{equation}
satisfying the following property: if $\tilde{\alpha}\in
(\pi_*)^{-1}(\alpha),$ $\tilde{\beta}\in (\pi_*)^{-1}(\beta)$ then
the following are equivalent:

(iii) there exists an element $\tilde{X}\in \mg^0$ such that
$\pi(\tilde{X})=X$ and
$\exp(\tilde{X})(\tilde{\alpha})=\tilde{\beta}.$

(iv) $o_1^X(\alpha,\beta)=0.$

4) Let $\alpha,\beta\in MC(\mg),$ and let $X\in (\mg/\mh)^0$ be
such that $\exp(X)(\pi_*(\alpha))=\pi_*(\beta).$ Suppose that the
set $(\pi_*)^{-1}(X)=\{\tilde{X}\in\mg^0\mid
\exp(\tilde{X})(\alpha)=\beta\}$ is non-empty. Then there is a
natural simply transitive action of $Z^0(\mh)$ on the set
$(\pi_*)^{-1}(X).$
\end{prop}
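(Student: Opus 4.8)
The plan is to mimic the structure already present in parts 1)–3) of Proposition \ref{obstr_theory}: everything reduces to the fact that $\mh$ is a \emph{square-zero} DG ideal (since $[\mg,\mh]=0$), so that exponentiation inside $\mh$ is linear and the relevant obstruction lives in a cohomology group of $\mh$. Concretely, suppose $\tilde X_0\in(\pi_*)^{-1}(X)$ is a fixed solution, so $\exp(\tilde X_0)(\alpha)=\beta$ and $\pi(\tilde X_0)=X$. Given any other $\tilde X\in\mg^0$ with $\pi(\tilde X)=X$, write $\tilde X=\tilde X_0+h$ with $h\in\mh^0$. I will show that $\exp(\tilde X)(\alpha)=\beta$ holds if and only if $h\in Z^0(\mh)$, i.e. $\partial h=0$ together with the vanishing of the bracket terms; this gives the simply transitive $Z^0(\mh)$-action by $h\cdot\tilde X_0:=\tilde X_0+h$.

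First I would record the effect of the gauge action in the square-zero situation. For $\gamma\in\mg^0$ the vector field generating the flow is $\alpha\mapsto-\partial\gamma+[\gamma,\alpha]$; integrating, $\exp(\gamma)(\alpha)$ is obtained by the usual BCH-type formula, but all iterated brackets that land two factors in $\mh$ vanish. So for $\tilde X=\tilde X_0+h$ with $h\in\mh^0$, using $[\mg,\mh]=0$ one gets that $\exp(\tilde X_0+h)$ acts on MC elements; since $[h,\mh]=0$ and $[h,\alpha]\in\mh$ with $[h,[h,\alpha]]=0$, the composite flow simplifies and one finds, after a short computation,
\begin{equation}
\exp(\tilde X_0+h)(\alpha)=\exp(\tilde X_0)(\alpha)-\partial h+[h,\alpha],
\end{equation}
because the "correction'' terms from non-commutativity all contain a double bracket into $\mh$. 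Hence $\exp(\tilde X)(\alpha)=\beta$ iff $-\partial h+[h,\alpha]=0$; but $\alpha\in MC(\mg)$ precisely means $\partial\alpha+\tfrac12[\alpha,\alpha]=0$, and the condition $\partial_{\alpha}h:=\partial h-[h,\alpha]=0$ is exactly the statement that $h$ is a cocycle for the twisted differential on $\mh$. Since $[\mg,\mh]=0$ forces $[\alpha,h]=0$ for $\alpha\in\mg^1$, the twisted differential agrees with $\partial$ on $\mh$, so the condition is simply $h\in Z^0(\mh)$. This yields the action; it is free because $\tilde X_0+h=\tilde X_0+h'$ forces $h=h'$, and transitive because any $\tilde X$ with the required property differs from $\tilde X_0$ by such an $h$.

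Finally I would address the one subtlety: well-definedness independent of the choice of basepoint $\tilde X_0$, and the passage from the \emph{nilpotent} to the \emph{pro-nilpotent} setting used in Lemma \ref{bijection}. The first is immediate once the action is described as addition of $h\in Z^0(\mh)$, since $Z^0(\mh)$ is an honest abelian group acting on the set $(\pi_*)^{-1}(X)$ by translation. For the second, one filters $\mg$ and applies the nilpotent statement to each $\mg/L_r\mg$ with $\mh$ replaced by $L_s\mg/L_{s+1}\mg$ (which is square-zero in $\mg/L_{s+2}\mg$ once one checks $[\mg,L_s\mg]\subset L_{s+1}\mg$), then takes the inverse limit; convergence is guaranteed by completeness of the filtration. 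The main obstacle I expect is the bookkeeping in the BCH expansion of $\exp(\tilde X_0+h)(\alpha)$: one must verify carefully that \emph{every} term beyond $-\partial h+[h,\alpha]$ involves a bracket with two arguments in $\mh$ and hence dies, which requires keeping track of where each factor of $h$ and each factor of $\tilde X_0$ ends up; the hypothesis $[\mg,\mh]=0$ (not merely $[\mh,\mh]=0$) is what makes this work cleanly, and I would isolate it as the one point to check with full care.
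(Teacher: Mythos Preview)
Your approach to part 4) is correct and is exactly the paper's: the action is translation by $Z^0(\mh)$, and the paper dispatches it in one line (``The desired action is just the translation one. It is obviously simply transitive.'').  Your anticipated ``main obstacle'' --- bookkeeping in the BCH expansion of $\exp(\tilde X_0+h)$ --- does not actually arise: since $[\mg,\mh]=0$, the element $h$ is central in $\mg^0$, so $\exp(\tilde X_0+h)=\exp(h)\exp(\tilde X_0)$ with \emph{no} correction terms at all, and $\exp(h)$ acts on $\mg^1$ simply as translation by $-\partial h$ (the bracket term $[h,\alpha]$ vanishes immediately).  Your digression on the pro-nilpotent passage is extraneous here --- the proposition is stated and proved for nilpotent $\mg$, and the pro-nilpotent reduction happens later, in the proofs of Proposition \ref{bij_special} and Lemma \ref{bijection}.
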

\begin{proof}1) Let $\alpha\in MC(\mg/\mh).$ Take some $\tilde{\alpha}\in \mg^1$ such that $\pi(\tilde{\alpha})=\alpha.$
Then it is easy to check that
$$\cF(\tilde{\alpha}):=\partial\tilde{\alpha}+\frac12[\tilde{\alpha},\tilde{\alpha}]\in
Z^2(\mg).$$ Define $o_2(\alpha)$ to be the class of
$\cF(\tilde{\alpha}).$

Check that this is well defined. Take some other lift
$\tilde{\alpha}'\in\mg^1$ of $\alpha.$ Since
$\alpha-\alpha'\in\mh$ is central, we have that
$\cF(\tilde{\alpha})-\cF(\tilde{\alpha}')=\partial(\tilde{\alpha}-\tilde{\alpha}').$
Therefore, $o_2(\alpha)$ is well defined. Now we prove that
(i)$\Leftrightarrow$(ii).

(i)$\Rightarrow$(ii). Let $\tilde{\alpha}\in MC(\mg)$ be such that
$\pi_*(\tilde{\alpha})=\alpha.$ Then $\cF(\tilde{\alpha})=0,$
hence $o_2(\alpha)=0.$

(ii)$\Rightarrow$(i). Let $\tilde{\alpha}\in \mg^1$ be such that
$\pi(\tilde{\alpha})=\alpha.$ Since $o_2(\alpha)=0,$ there exists
$u\in\mh^1$ such that $\cF(\tilde{\alpha})=\partial(u).$ Then
$\tilde{\alpha}-u\in MC(\mg)$ and
$\pi_*(\tilde{\alpha}-u)=\alpha.$

Now, suppose that $\alpha,\beta\in MC(\mg/\mh)$ are equivalent,
and $X\in (\mg/\mh)^0$ is such that $\exp(X)(\alpha)=\beta.$
Suppose that $o_2(\alpha)=0.$ Take some lift $\tilde{\alpha}\in
MC(\mg)$ of $\alpha,$ and a lift $\tilde{X}\in\mg^0$ of $X.$ Then
$\exp(\tilde{X})(\tilde{\alpha})\in MC(\mg)$ is a lift of $\beta,$
hence $o_2(\beta)=0.$ Analogously, vanishing of $o_2(\beta)$
implies vanishing of $o_2(\alpha).$

2) The desired action is just the translation one. It is obviously
simply transitive.

3) Let $\tilde{\alpha}\in (\pi_*)^{-1}(\alpha),$ $\tilde{\beta}\in
(\pi_*)^{-1}(\beta).$ Take some lift $\tilde{X}\in \mg^0$ of $X.$
Define $o_1^{X}(\tilde{\alpha},\tilde{\beta})$ to be the class of
$\tilde{\beta}-\exp(\tilde{X})(\alpha)$ in $H^1(\mh).$

First check that this is well defined. Let $\tilde{X}'\in\mg^0$ be
another lift of $\tilde{X}.$ Then we have that

\begin{equation}(\tilde{\beta}-\exp(\tilde{X})(\alpha))-(\tilde{\beta}-\exp(\tilde{X}')(\alpha))=
\partial(\tilde{X}-\tilde{X}').\end{equation}

Therefore, the map $o_1^X$ is well defined. It is clear that it is
$Z^1(\mh)\text{-}$equivariant. Now prove that
(iii)$\Leftrightarrow$(iv).

(iii)$\Rightarrow$(iv). It suffices to choose $\tilde{X}\in\mg^0$
such that $\exp(\tilde{X})(\tilde{\alpha})=\tilde{\beta}.$

(iv)$\Rightarrow$(iii). Choose some lift $\tilde{X}\in\mg^0$ of
$X.$ Since $o_1^X(\tilde{\alpha},\tilde{\beta})=0,$ there exists
$u\in\mh^0$ such that
$\tilde{\beta}-\exp(\tilde{X})(\alpha)=\partial u.$ Then
$\exp(\tilde{X}-u)(\alpha)=\beta$ and $\pi(\tilde{X}-u)=X.$

4) The desired action is just the translation one. It is obviously
simply transitive.
\end{proof}

Let $\mg$ be some pro-nilpotent DG Lie algebra.

Now we recall the notion of homotopy between two exponents in
$\exp(\mg^0).$ If $\alpha\in\mg^1$ is an MC solution, then we have
the deformed differential $\partial_{\alpha}(u)=\partial
u+[\alpha,u].$
\begin{defi}Let $\alpha,\alpha'\in\mg^1$ be MC solutions, and let
$X,Y\in\mg^0$ be such that
$\exp(X)\cdot\alpha=\exp(Y)\cdot\alpha=\alpha'.$ Then an element
$H\in\mg^{-1}$ is called a homotopy between $X$ and $Y$ if
\begin{equation}\exp(Y)=\exp(X)\exp(\partial_{\alpha}H).\end{equation}
\end{defi}

It is clear that for each $X$ and $u$ as in definition there
exists precisely one $Y\in \mg^0$ such that $u$ is a homotopy
between $X$ and $Y.$

Now we prove the special case of Lemma \ref{bijection}.

\begin{prop}\label{bij_special}Let $\Phi:\mg\to\mh$ be a DG filtered quasi-isomorphism of pro-nilpotent DG Lie algebras.
Then the induced map $\Phi_*:MC(\mg)/\exp(\mg^0)\to
MC(\mh)/\exp(\mh^0)$ is a bijection.\end{prop}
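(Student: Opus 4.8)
The plan is to proceed by d\'evissage along the filtrations. Set $\mg(r):=\mg/L_{r+1}\mg$ and $\mh(r):=\mh/L_{r+1}\mh$; these are nilpotent DG Lie algebras, and $\Phi$ descends to maps $\Phi^{(r)}_*\colon MC(\mg(r))/\exp(\mg(r)^0)\to MC(\mh(r))/\exp(\mh(r)^0)$ compatible with the projections $\mg(r)\to\mg(r-1)$ and $\mh(r)\to\mh(r-1)$. Because the filtrations are complete, $MC(\mg)/\exp(\mg^0)$ is the inverse limit of the tower $\big(MC(\mg(r))/\exp(\mg(r)^0)\big)_r$, and likewise for $\mh$ (a standard check: one lifts Maurer--Cartan solutions and gauge transformations through the degree-$0$ surjections $\pi$ step by step and uses convergence; compare \cite{Ge}). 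Hence it suffices to prove that each $\Phi^{(r)}_*$ is a bijection, which I would do by induction on $r$, the case $r=0$ being trivial since $\mg(0)=\mh(0)=0$.

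For the inductive step I would apply the obstruction theory of Proposition \ref{obstr_theory} to the central DG ideal $\mathfrak{a}:=L_r\mg/L_{r+1}\mg\subset\mg(r)$, which has $[\mg(r),\mathfrak{a}]=0$ and quotient $\mg(r)/\mathfrak{a}=\mg(r-1)$, and simultaneously to its counterpart $\mathfrak{b}:=L_r\mh/L_{r+1}\mh\subset\mh(r)$. The filtered-quasi-isomorphism hypothesis says exactly that $\Phi$ restricts to a quasi-isomorphism $\mathfrak{a}\to\mathfrak{b}$, hence induces isomorphisms $H^1(\mathfrak{a})\cong H^1(\mathfrak{b})$ and $H^2(\mathfrak{a})\cong H^2(\mathfrak{b})$, and the maps $o_2$, $o_1^{(\cdot)}$ of Proposition \ref{obstr_theory} are natural with respect to $\Phi$. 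For surjectivity of $\Phi^{(r)}_*$: given $\bar\beta\in MC(\mh(r))$, the inductive hypothesis makes its image in $MC(\mh(r-1))$ gauge-equivalent to $\Phi_*\bar\alpha_0$ for some $\bar\alpha_0\in MC(\mg(r-1))$, and after lifting the witnessing degree-$0$ element to $\mh(r)$ and acting on $\bar\beta$ I may assume $\pi_*\bar\beta=\Phi_*\bar\alpha_0$. Then $o_2(\Phi_*\bar\alpha_0)=0$ since $\bar\beta$ is a lift, so $o_2(\bar\alpha_0)=0$ by naturality and the isomorphism on $H^2$, and Proposition \ref{obstr_theory}(1) produces a lift $\tilde\alpha\in MC(\mg(r))$ of $\bar\alpha_0$. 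Finally $\Phi_*\tilde\alpha$ and $\bar\beta$ both lie over $\Phi_*\bar\alpha_0$, hence differ by the simply transitive $Z^1(\mathfrak{b})$-action of Proposition \ref{obstr_theory}(2); since a coboundary acts on such lifts through a gauge transformation (the gauge action of a central $0$-cochain $u$ being translation by $\partial u$) and $H^1(\mathfrak{a})\cong H^1(\mathfrak{b})$, I can correct $\tilde\alpha$ by a suitable element of $Z^1(\mathfrak{a})$ so that $\Phi_*$ of the result becomes gauge-equivalent to $\bar\beta$.

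For injectivity, suppose $\alpha,\alpha'\in MC(\mg(r))$ with $\Phi_*\alpha$ gauge-equivalent to $\Phi_*\alpha'$. Their images in $\mg(r-1)$ have gauge-equivalent $\Phi_*$-images, so by the inductive hypothesis $\pi_*\alpha$ is gauge-equivalent to $\pi_*\alpha'$; lifting the witnessing element and acting on $\alpha'$ I reduce to $\pi_*\alpha=\pi_*\alpha'$, whence $\alpha'=\zeta\cdot\alpha$ for a unique $\zeta\in Z^1(\mathfrak{a})$ by Proposition \ref{obstr_theory}(2). By Proposition \ref{obstr_theory}(3), showing $[\zeta]=0$ in $H^1(\mathfrak{a})$ will yield a gauge element of $\mg(r)^0$ taking $\alpha$ to $\alpha'$, so this is the goal. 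On the target $\Phi_*\alpha'=\Phi_*\alpha+\Phi(\zeta)$ with $\Phi(\zeta)\in Z^1(\mathfrak{b})$, and the given gauge equivalence, together with Proposition \ref{obstr_theory}(3)--(4), shows that the relevant obstruction class on the $\mh$-side vanishes; to transport this back through the isomorphism $H^1(\mathfrak{a})\cong H^1(\mathfrak{b})$ I would strengthen the induction to also assert that $\Phi$ induces a bijection on the relevant sets of gauge transformations (equivalently, matches stabilizers of Maurer--Cartan solutions), which is exactly what parts (3) and (4) of Proposition \ref{obstr_theory} are designed to feed, using in addition that $\Phi$ remains a filtered quasi-isomorphism after deforming the differentials by any Maurer--Cartan element (since bracketing with such an element strictly raises the filtration). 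Granting that, $[\zeta]=0$ follows and $\alpha\sim\alpha'$ in $\mg(r)$; assembling over all $r$ and passing to the inverse limit finishes the proof.

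The step I expect to be the main obstacle is precisely this last point in the injectivity argument: the gauge transformation witnessing $\Phi_*\alpha\sim\Phi_*\alpha'$ need not reduce to the identity modulo $\mathfrak{b}$, so one is really comparing an obstruction relative to a nontrivial stabilizer element $Y$ of the reduced solution on the target side with one relative to the trivial parameter on the source side. Reconciling them requires showing that $Y$ can be pulled back (up to a deformed coboundary, which exponentiates to the identity on Maurer--Cartan solutions by the homotopy notion recalled before Proposition \ref{bij_special}) to a stabilizer of the reduced solution on the source side, and that the obstruction class is unchanged under such modifications of the gauge parameter and under the $Z^0(\mathfrak{b})$-torsor of lifts. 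Once that stabilizer-matching/independence bookkeeping is carried out, the surjectivity argument, the compatibility with the projection towers, and the passage to the inverse limit are all routine cocycle chasing together with completeness of the filtrations.
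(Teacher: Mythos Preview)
Your overall architecture---d\'evissage along the filtration using the obstruction theory of Proposition \ref{obstr_theory}---is exactly the paper's, and your surjectivity argument is the same as the paper's Lemma inside the proof. Two points deserve comment.

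First, your claim that $MC(\mg)/\exp(\mg^0)$ is the inverse limit of the tower $MC(\mg(r))/\exp(\mg(r)^0)$ is not something the paper asserts or needs; quotients do not commute with inverse limits in general. The paper instead builds, at each stage, a lift $\tilde\alpha\in MC(\mg(r+1))$ of the previous $\alpha$ together with a gauge correction in $L_r\mh^0/L_{r+1}\mh^0$, so that the accumulated gauge corrections have increasing filtration and their infinite product converges. Your parenthetical ``one lifts \ldots step by step and uses convergence'' is the right replacement for the inverse-limit statement; just make that the actual argument.

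Second, the gap you isolate in injectivity is real, and the paper resolves it by \emph{not} first reducing to $\pi_*\alpha=\pi_*\alpha'$. Once you act on $\alpha'$ by a lifted gauge element to force $\pi_*\alpha=\pi_*\alpha'$, you have also implicitly modified the $\mh$-side witness $Y$, and, as you note, its reduction $\pi_2(Y')$ is now only a stabilizer of $\Phi_*\pi_*\alpha$ with no reason to be trivial or to come from $\mg$. The paper's fix is to strengthen the inductive statement to carry the gauge element along: its Lemma \ref{inj_special} takes as input $\alpha,\beta\in MC(\mg(r))$, a gauge witness $X\in(\mg(r-1))^0$ at the previous level, and a gauge witness $Y\in(\mh(r))^0$ with $\Phi(X)=\pi_2(Y)$, and produces $\tilde X\in(\mg(r))^0$ lifting $X$ with $\exp(\tilde X)(\alpha)=\beta$ \emph{and} $\Phi(\tilde X)$ homotopic to $Y$ (in the sense defined just before the proposition). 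The homotopy clause is what allows the induction to continue, since it lets one adjust $Y$ at the next step so that the compatibility $\Phi(\tilde X)=\pi_2(Y)$ is restored. This is precisely the ``stabilizer-matching/independence bookkeeping'' you anticipated; the clean way to package it is to make the gauge element and its homotopy class part of the inductive hypothesis from the start, rather than discarding it and trying to recover it afterwards.
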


\begin{proof} 1) First we prove that the induced map on the equivalence classes
of MC solutions is {\it surjective.} Take some $r\geq 1.$ Denote
by $\pi_1:\mg/L_{r+1}\mg\to \mg/L_r\mg,$ $\pi_2:\mh/L_{r+1}\mh\to
\mg/L_r\mh$ the natural projections. Clearly, it suffices to prove
the following

\begin{lemma}Take some $\alpha\in MC(\mg/L_r\mg).$ Suppose that there exists $\beta\in MC(\mh/L_{r+1}\mh)$
such that $\pi_{2*}(\beta)=\Phi_*(\alpha).$ Then there exists
$\tilde{\alpha}\in MC(\mg/L_{r+1}\mg)$ and $X\in
L_r\mh^0/L_{r+1}\mh^0,$ such that
$\pi_{1*}(\tilde{\alpha})=\alpha,$ and
\begin{equation}\Phi_*(\tilde{\alpha})=\exp(X)(\beta)=\beta-\partial X.\end{equation}
\end{lemma}
\begin{proof}First, we have that $o_2(\Phi_*(\alpha))=\Phi(o_2(\alpha)).$ Since $\pi_{2*}(\beta)=\Phi_*(\alpha),$
we have by Proposition \ref{obstr_theory} that
$o_2(\Phi_*(\alpha))=0.$ Since $\Phi$ is filtered
quasi-isomorphism, we have that $o_2(\alpha)=0.$ Therefore, by
Proposition \ref{obstr_theory}, there exists some
$\tilde{\alpha}\in MC(\mg/L_{r+1}\mg),$ such that
$\pi_{1*}(\tilde{\alpha})=\alpha.$

Let $u\in Z^1(L_r\mg/L_{r+1}\mg).$ Then we have that
$o_1^0(\Phi_*(\tilde{\alpha}+u),\beta)=o_1(\Phi_*(\tilde{\alpha}),\beta)-\Phi^1(u).$
Again, since $\Phi$ is filtered quasi-isomorphism, we can choose
$u$ in such a way that $o_1^0(\Phi_*(\tilde{\alpha}+u),\beta)=0.$
In this case, by Proposition \ref{obstr_theory} 3), we have that
there exists $X\in L_r\mh^0/L_{r+1}\mh^0,$ such that
$\Phi_*(\tilde{\alpha})=\exp(X)(\beta).$ Lemma is proved.
\end{proof}

Surjectivity is proved.

2) Now, prove that our map is injective. Take some $r\geq 1.$
Denote by $\pi_1:\mg/L_{r+1}\mg\to \mg/L_r\mg,$
$\pi_2:\mh/L_{r+1}\mh\to \mg/L_r\mh$ the natural projections.
Clearly, it suffices to prove the following

\begin{lemma}\label{inj_special} Let $\alpha,\beta\in MC(\mg/L_{r+1}\mg),$ $X\in (\mg/L_r\mg)^0,$ and $Y\in MC(\mh/L_{r+1}\mh)$ be such that
$\exp(Y)(\Phi_*(\alpha))=\Phi_*(\beta),$
$\exp(X)(\pi_{1*}(\alpha))=\pi_{1*}(\beta),$ and
$\Phi(X)=\pi_2(Y).$ Then there exists some $\tilde{X}\in
(\mg/L_{r+1}\mg)^0$ such that
\begin{equation}\label{tilde_X}\pi_1(\tilde{X})=X,\quad
\exp(\tilde{X})(\alpha)=\beta,\end{equation} and $\Phi(\tilde{X})$
is homotopic to $Y$ (as a homotopy between $\Phi_*(\alpha)$ and
$\Phi_*(\beta)$).\end{lemma}
\begin{proof}First, we have that $\Phi(o_1^X(\alpha,\beta))=o_1^{\Phi(X)}(\Phi_*(\alpha),\Phi_*(\beta)).$ By
Proposition \ref{obstr_theory} 3), we have that
$o_1^{\Phi(X)}(\Phi_*(\alpha),\Phi_*(\beta))=0.$ Since $\Phi$ is
filtered quasi-isomorphism, we have that $o_1^X(\alpha,\beta)=0.$
Therefore, by Proposition \ref{obstr_theory} 3), there exists some
$\tilde{X}\in (\mg/L_{r+1}\mg)^0$ such that \eqref{tilde_X}
holds.It follows from Proposition \ref{obstr_theory} 4) and
surjectivity of the map $H^0(L_r\mg/L_{r+1}\mg)\to
H^0(L_r\mh/L_{r+1}\mh),$ that $\tilde{X}$ can be choosen in such a
way that $Y-\Phi(\tilde{X})=\partial u$ for some $u\in
(L_r\mh/L_{r+1}\mh)^{-1}.$ Then $u$ is a homotopy between
$\Phi(\tilde{X})$ and $Y.$ Lemma is proved.
\end{proof}

Injectivity is proved.
\end{proof}

To prove Lemma \ref{bijection}, we need first to modify the notion
of homotopy between MC solutions (so that it generalizes naturally
to pro-nilpotent $L_{\infty}\text{-}$algebras). Denote by
$\Omega_1$ the commutative DG algebra of polynomial differential
form on the affine line. Denote by $t$ the parameter on the line.
If $\mg$ is a DG Lie algebra, then $\mg\otimes \Omega_1$ is also a
DG Lie algebra.

In the case when $\mg$ is pro-nilpotent, we may and will consider
the completed tensor product:

\begin{equation}\mg\hat{\otimes}\Omega_1:=\lim\limits_{\leftarrow}\, (\mg/L_r\mg)\otimes\Omega_1.\end{equation}

This is also naturally filtered pro-nilpotent DG Lie algebra. We
have natural inclusion $\iota:\mg\to\mg\hat{\otimes}\Omega_1$
which is a filtered quasi-isomorphism. Further, for each $t_0\in
\mk$ we have the evaluation morphism
$\ev_{t_0}:\mg\hat{\otimes}\Omega_1\to\mg,$ which is left inverse
to $\iota,$ and hence is also filtered quasi-isomorphism.

\begin{prop}Let $\mg$ be a pro-nilpotent DG Lie algebra. Take some $\alpha,\beta\in MC(\mg).$ Then the following
are equivalent:

(i) $\alpha$ and $\beta$ are homotopic.

(ii)There exists some $A\in MC(\mg\hat{\otimes}\Omega_1)$ such
that $\ev_{0*}(A)=\alpha$ and $\ev_{1*}(A)=\beta.$\end{prop}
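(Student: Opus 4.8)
The plan is to prove the two implications separately, using the evaluation maps $\ev_{t_0}$ and the already-established machinery of filtered $L_\infty$-quasi-isomorphisms together with Proposition \ref{bij_special} applied to the inclusion $\iota:\mg\to\mg\hat\otimes\Omega_1$. The key structural fact is that $\iota$ is a filtered quasi-isomorphism with left inverses $\ev_0$ and $\ev_1$, so all three maps induce bijections on equivalence classes of Maurer--Cartan solutions.

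\smallskip

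For the direction (i)$\Rightarrow$(ii): assume $\alpha$ and $\beta$ are homotopic in the sense of the definition preceding Proposition \ref{bij_special}, i.e. there is a gauge element $X\in\mg^0$ with $\exp(X)\cdot\alpha=\beta$ and a homotopy $H\in\mg^{-1}$. I would build an explicit path. Set $A_0=\iota(\alpha)\in MC(\mg\hat\otimes\Omega_1)$, which satisfies $\ev_{0*}(A_0)=\ev_{1*}(A_0)=\alpha$. Now apply the gauge transformation by the element $t\,X\in(\mg\hat\otimes\Omega_1)^0$ (more precisely, one wants a gauge element $\Xi(t)\in(\mg\hat\otimes\Omega_1)^0$ with $\Xi(0)=0$ and $\Xi(1)=X$; the simplest choice $\Xi(t)=tX$ works since $tX$ has the same pro-nilpotent vanishing order as $X$). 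Put $A:=\exp(\Xi)\cdot A_0$. This is a Maurer--Cartan element in $\mg\hat\otimes\Omega_1$. Then $\ev_{0*}(A)=\exp(\Xi(0))\cdot\ev_{0*}(A_0)=\exp(0)\cdot\alpha=\alpha$ and $\ev_{1*}(A)=\exp(\Xi(1))\cdot\ev_{1*}(A_0)=\exp(X)\cdot\alpha=\beta$, as required. (Here one uses that $\ev_{t_0}$ is a strict DG Lie algebra morphism, hence commutes with the exponential gauge action and with $\pi_*$.) This direction does not even need the homotopy $H$; any gauge equivalence already produces such an $A$. The homotopy $H$ becomes relevant only if one wants to match the finer equivalence relations, but for the statement as given it is not needed.

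\smallskip

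For the converse (ii)$\Rightarrow$(i): suppose $A\in MC(\mg\hat\otimes\Omega_1)$ with $\ev_{0*}(A)=\alpha$, $\ev_{1*}(A)=\beta$. Since $\iota$ is a filtered quasi-isomorphism, Proposition \ref{bij_special} says $\iota_*$ is a bijection on equivalence classes; since $\ev_0\circ\iota=\id$, it follows that $\iota_*$ is also injective with the property that $A$ and $\iota(\ev_{0*}(A))=\iota(\alpha)$ are gauge-equivalent in $\mg\hat\otimes\Omega_1$ (both map under $\ev_{0*}$ to $\alpha$, and $\ev_{0*}$ is a bijection on equivalence classes, so they lie in the same class). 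Thus there is $\Xi\in(\mg\hat\otimes\Omega_1)^0$ with $\exp(\Xi)\cdot\iota(\alpha)=A$. Apply $\ev_{1*}$: $\exp(\ev_1(\Xi))\cdot\alpha=\exp(\ev_1(\Xi))\cdot\ev_{1*}(\iota(\alpha))=\ev_{1*}(A)=\beta$, so setting $X=\ev_1(\Xi)\in\mg^0$ we get $\exp(X)\cdot\alpha=\beta$. To upgrade this gauge equivalence to a homotopy in the precise sense of the definition, one notes that $\ev_0(\Xi)$ gauges $\alpha$ to $\ev_{0*}(A)=\alpha$, i.e. $\exp(\ev_0(\Xi))$ stabilizes $\alpha$; absorbing this into a choice of base point (replace $\Xi$ by $\Xi'$ with $\ev_0(\Xi')=0$, possible since $\ev_{0*}$ on homotopy classes of gauge elements is surjective, cf.\ Proposition \ref{obstr_theory} 4) applied inductively along the filtration) and then taking $H$ to be the $\mg^{-1}$-component obtained by integrating $\Xi'$ against $dt$, i.e. writing $\Xi'=\Xi'_0(t)+\Xi'_1(t)\,dt$ and setting $H=\int_0^1\Xi'_1(t)\,dt$ after a standard gauge-fixing argument, gives the required homotopy.

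\smallskip

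The main obstacle is the last step of the converse: passing from ``$\alpha$ and $\beta$ are gauge-equivalent via an element of $\mg^0$ obtained from a path'' to ``$\alpha$ and $\beta$ are homotopic'' in the exact sense of the definition, i.e.\ producing the element $H\in\mg^{-1}$ with $\exp(Y)=\exp(X)\exp(\partial_\alpha H)$. This requires a careful induction along the pro-nilpotent filtration, at each stage solving the linear problem of lifting a cohomology-level identity to a cochain-level homotopy, and using that $\ev_0,\ev_1,\iota$ are compatible with the filtrations so that the obstruction theory of Proposition \ref{obstr_theory} applies to $\mg\hat\otimes\Omega_1$ and its quotients $(\mg/L_r\mg)\otimes\Omega_1$. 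The forward direction (i)$\Rightarrow$(ii) is essentially formal, as sketched above; the content is really in the homotopy-invariance of the Maurer--Cartan functor, which is exactly what Propositions \ref{obstr_theory} and \ref{bij_special} were set up to deliver.
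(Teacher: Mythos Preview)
Your argument is essentially correct and follows the same route as the paper: for (i)$\Rightarrow$(ii) you act on $\iota(\alpha)$ by $\exp(tX)$ inside $\mg\hat\otimes\Omega_1$, which is exactly the paper's explicit path $A=\exp(tX)(\alpha)+X\otimes dt$ (up to a sign in the $dt$-component coming from conventions); for (ii)$\Rightarrow$(i) you use that $\ev_0,\ev_1,\iota$ are filtered quasi-isomorphisms together with Proposition~\ref{bij_special}, just as the paper does, only you extract the gauge element more explicitly by pushing $\Xi$ through $\ev_1$.

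The one point of confusion is what ``homotopic'' means in condition (i). You have read it as the notion defined just before the proposition (homotopy between two exponents via an $H\in\mg^{-1}$), but that definition concerns \emph{gauge elements}, not MC solutions. In the statement, ``$\alpha$ and $\beta$ are homotopic'' simply means they are gauge equivalent, i.e.\ lie in the same $\exp(\mg^0)$-orbit; this is clear from the paper's proof, which concludes (ii)$\Rightarrow$(i) immediately after invoking Proposition~\ref{bij_special}. Consequently your entire final paragraph---the attempt to produce an $H\in\mg^{-1}$ by integrating against $dt$ and the inductive obstruction argument---is unnecessary. Once you have $\exp(\ev_1(\Xi))\cdot\alpha=\beta$, you are done with (ii)$\Rightarrow$(i); there is nothing further to ``upgrade''. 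Likewise in (i)$\Rightarrow$(ii) you correctly note that $H$ plays no role, and indeed none is assumed: the hypothesis is just that some $X\in\mg^0$ gauges $\alpha$ to $\beta$.
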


\begin{proof}(ii)$\Rightarrow$(i). From Proposition \ref{bij_special} we
deduce that $A$ is homotopic both to $\iota_*(\alpha)$ and
$\iota_*(\beta).$ Again by Proposition \ref{bij_special}, we have
that $\alpha$ and $\beta$ are homotopic.

(i)$\Rightarrow$(ii). Take $X\in\mg^0$ such that
$\exp(X)(\alpha)=\beta.$ Then it suffices to put
\begin{equation}A=\exp(tX)(\alpha)+X\otimes dt.\end{equation}
\end{proof}

From this moment, by a homotopy between MC solutions
$\alpha,\beta$ in the pro-nilpotent DGLA $\mg$ we mean an MC
solution $A\in \mg\hat{\otimes}\Omega_1$ such that
$\ev_{0*}(A)=\alpha$ and $\ev_{1*}(A)=\beta.$

We also modify the notion of homotopy between homotopies.

\begin{defi} Let $A,B\in MC(\mg\hat{\otimes}\Omega_1)$ be
homotopies between $\alpha,\beta\in MC(\mg).$ We call $A$ and $B$
homotopic if
\begin{equation}B=\exp(\partial_{\beta}(u)t)\exp(t(1-t)X)(A),\end{equation}
where $X\in \mg^0\hat{\otimes}\Omega_1^0,$ and $u\in \mg^{-1}.$
\end{defi}

We need to adapt the obstruction theory for our modified
homotopies.

\begin{prop}\label{obstr_modified}Let $\mg$ be a nilpotent DGLA, and $\mh\subset\mg$ be its central DG ideal, and $\pi:\mg\to\mg/\mh$ the
natural projection. Let $\alpha,\beta\in MC(\mg).$

1) There is a natural map $A\mapsto o_1^A(\alpha,\beta)$ which
assigns to each homotopy $A\in MC((\mg/\mh)\otimes \Omega_1)$
between $\pi_*(\alpha)$ and $\pi_*(\beta),$ an element
$o_1^A(\alpha,\beta)\in H^1(\mh),$ such that the following are
equivalent:

(i) There exists a homotopy $\tilde{A}\in MC(\mg\otimes\Omega_1)$
between $\alpha$ and $\beta$ such that $\pi_*(\tilde{A})=A.$

(ii) $o_1^A(\alpha,\beta)=0.$

2) Suppose that $A$ is the homotopy between $\pi_*(\alpha)$ and
$\pi_*(\beta).$ Then there is a natural transitive action of
$H^0(\mg)$ on homotopy classes of elements in the set
$(\pi_*)^{-1}(A).$ Here $(\pi_*)^{-1}(A)$ is the set of homotopies
$\tilde{A}$ between $\alpha$ and $\beta$ such that
$\pi_*(\tilde{A})=A.$
\end{prop}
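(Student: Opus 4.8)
The plan is to bootstrap from the obstruction theory of Proposition \ref{obstr_theory}, applied to the nilpotent DGLA $\mg\otimes\Omega_1$ with its central DG ideal $\mh\otimes\Omega_1$, but carried out relative to the two endpoint evaluations $\ev_0,\ev_1$. To this end I introduce the subcomplex $\Omega_1^{\mathrm{rel}}=\ker(\ev_0)\cap\ker(\ev_1)\subset\Omega_1$, whose degree-$0$ part is $t(1-t)\mk[t]$ and whose degree-$1$ part is all of $\mk[t]\,dt$. A one-line computation with $\int_0^1$ shows $H^0(\Omega_1^{\mathrm{rel}})=0$ and $H^1(\Omega_1^{\mathrm{rel}})\cong\mk$, so (everything being free over the field $\mk$) the K\"unneth formula gives $H^j(\mh\otimes\Omega_1^{\mathrm{rel}})\cong H^{j-1}(\mh)$, and likewise with $\mg$ in place of $\mh$. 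This single degree shift is exactly what converts the ``$o_2$''-type obstruction of Proposition \ref{obstr_theory} into the ``$o_1$''-type obstruction asked for here.

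For part 1, given a homotopy $A\in MC((\mg/\mh)\otimes\Omega_1)$ from $\pi_*\alpha$ to $\pi_*\beta$, I first choose a lift $\tilde A_0\in(\mg\otimes\Omega_1)^1$ of $A$ with $\ev_0\tilde A_0=\alpha$ and $\ev_1\tilde A_0=\beta$ --- such a lift exists, since one may take any lift and correct it by an element of $\mh^1$ depending affinely on $t$. Then $\cF(\tilde A_0):=\partial\tilde A_0+\frac12[\tilde A_0,\tilde A_0]$ lies in $(\mh\otimes\Omega_1)^2$ (because $A$ is MC), is $\partial$-closed (because $\mh\otimes\Omega_1$ is a central ideal), and is killed by $\ev_0$ and $\ev_1$ (because $\ev_i\cF(\tilde A_0)=\cF(\ev_i\tilde A_0)\in\{\cF(\alpha),\cF(\beta)\}=\{0\}$), hence $\cF(\tilde A_0)\in Z^2(\mh\otimes\Omega_1^{\mathrm{rel}})$. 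Define $o_1^A(\alpha,\beta)$ to be its class in $H^2(\mh\otimes\Omega_1^{\mathrm{rel}})\cong H^1(\mh)$. Independence of the chosen lift is immediate from centrality, exactly as in the proof of part 1 of Proposition \ref{obstr_theory}: two admissible lifts differ by $v\in(\mh\otimes\Omega_1^{\mathrm{rel}})^1$ and $\cF(\tilde A_0+v)=\cF(\tilde A_0)+\partial v$. For (i)$\Rightarrow$(ii) take $\tilde A_0$ to be an honest MC lift, so $\cF(\tilde A_0)=0$; for (ii)$\Rightarrow$(i) write $\cF(\tilde A_0)=\partial u$ with $u\in(\mh\otimes\Omega_1^{\mathrm{rel}})^1$ and observe that $\tilde A_0-u$ is MC, restricts to $\alpha$ and $\beta$ at the endpoints (as $\ev_iu=0$), and projects to $A$. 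Naturality is clear from the construction.

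For part 2, assume $(\pi_*)^{-1}(A)$ is non-empty. The same bookkeeping shows it is a torsor under $Z^1(\mh\otimes\Omega_1^{\mathrm{rel}})$: adding $v\in(\mh\otimes\Omega_1^{\mathrm{rel}})^1$ to a lift keeps it MC iff $\partial v=0$ (centrality), and keeps it in $(\pi_*)^{-1}(A)$ automatically. It remains to pass to the quotient by the homotopy-of-homotopies relation. Since $\mh$ is central, $\partial_\beta$ coincides with $\partial$ on $\mh\otimes\Omega_1$, so the gauge transformations $\exp(\partial_\beta(u)t)\exp(t(1-t)X)$ act on the $\mh\otimes\Omega_1$-component by pure translation; consequently the relation $\tilde A\sim\tilde A'$ on $(\pi_*)^{-1}(A)$ amounts to $\tilde A'-\tilde A$ lying in an explicit subgroup $N\subseteq Z^1(\mh\otimes\Omega_1^{\mathrm{rel}})$, assembled from $B^1(\mh\otimes\Omega_1^{\mathrm{rel}})$ together with the contributions of those $\mg$-valued parameters $u,X$ whose $(\mg/\mh)$-images stabilize $A$. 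Identifying $Z^1(\mh\otimes\Omega_1^{\mathrm{rel}})/N$ and exhibiting it as carrying a natural transitive $H^0(\mg)$-action --- via the long exact sequences attached to $0\to\mh\otimes\Omega_1^{\mathrm{rel}}\to\mh\otimes\Omega_1\to\mh\oplus\mh\to0$ and $0\to\mg\otimes\Omega_1^{\mathrm{rel}}\to\mg\otimes\Omega_1\to\mg\oplus\mg\to0$ and the inclusion $\mh\to\mg$ --- is the step I expect to be the main obstacle: it requires carefully unwinding the non-linear homotopy-of-homotopies definition and, in particular, keeping track of the factors $t$ versus $t(1-t)$. Everything else is a routine transcription of Proposition \ref{obstr_theory} into the $\Omega_1$-coefficient setting, using throughout that $\mh\otimes\Omega_1$ is a central DG ideal in $\mg\otimes\Omega_1$.
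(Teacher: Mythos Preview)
Your treatment of part 1) is correct and essentially identical to the paper's: you introduce the same relative complex (the paper calls it $L^{\bullet}$, with $L^0=t(1-t)\Omega_1^0$ and $L^1=\Omega_1^1$), compute its cohomology via $\int_0^1$, lift $A$ with prescribed endpoints, and define the obstruction as the class of the curvature in $H^2(\mh\otimes L^{\bullet})\cong H^1(\mh)$.

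For part 2) you have set up the right torsor but left the argument unfinished, and you are making it harder than necessary. You try to analyse the full homotopy-of-homotopies relation with parameters $u,X$ ranging over $\mg$, and flag this as ``the main obstacle.'' The paper avoids this entirely. For \emph{transitivity} it is enough to show that every coboundary $b\in B^1(\mh\otimes L^{\bullet})$ produces a homotopy between $\tilde A$ and $\tilde A+b$. Any such $b$ can be written as $\partial\big((\partial u)\,t+X\big)$ with $u\in\mh^{-1}$ and $X\in\mh^0\otimes L^0$; since $\mh$ is central, $\partial_{\beta}u=\partial u$ and the gauge element $\exp((\partial_{\beta} u)\,t)\exp(t(1-t)X')$ acts on $\tilde A$ by the pure translation $\tilde A\mapsto\tilde A+b$. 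Hence the $Z^1(\mh\otimes L^{\bullet})$-action descends to a transitive action of $H^1(\mh\otimes L^{\bullet})\cong H^0(\mh)$ on homotopy classes. That is the whole argument; no long exact sequences are needed. (That this action is well defined on homotopy classes follows because translation by a central cocycle commutes with any gauge transformation: $\exp(\gamma)\cdot(\tilde A+z)=\exp(\gamma)\cdot\tilde A+z$.)

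One further point that is leading you astray: the ``$H^0(\mg)$'' in the statement is a typo for ``$H^0(\mh)$'' --- the paper's own proof concludes with $H^0(\mh)$, and the analogous clause in Proposition~\ref{obstr_theory}(4) has $Z^0(\mh)$. Trying to manufacture an $H^0(\mg)$-action is part of why your part 2) stalls.
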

\begin{proof}1) Take some element $\tilde{A}\in (\mg\otimes
\Omega_1)^1$ such that $\pi(\tilde{A})=A,$
$\ev_0(\tilde{A})=\alpha,$ $\ev_1(\tilde{A})=\beta.$ Put
$\cF(\tilde{A})=\partial \tilde{A}+\frac12[\tilde{A},\tilde{A}].$
Then $\cF(\tilde{A})$ is a cocycle in the complex $\mh\otimes
L^{\cdot}\subset \mg\otimes\Omega_1,$ where
\begin{equation}L^{\cdot}\subset \Omega_1,\quad L^0=t(1-t)\Omega_1^0,\quad L^1=\Omega_1^1.
\end{equation}

Clearly, we have that $H^0(L^{\cdot})=0,$ $H^1(L^{\cdot})=\mk,$
and the natural projection $L^1\to \mk$ is given by the formula
\begin{equation}\sum\limits_{i=0}^N a_it^idt\mapsto
\sum\limits_{i=0}^N \frac{a_i}{i+1}.
\end{equation}

We define $o_1^A(\alpha,\beta)$ to be the class of
$\cF(\tilde{A})$ in $H^1(\mh)\cong H^2(\mh\otimes L^{\cdot}).$ The
checking of correctness and equivalence (i)$\Leftrightarrow$(ii)
is analogous to that of Proposition \ref{obstr_theory} 1).

2) Suppose that the set $(\pi_*)^{-1}(A)$ is non-empty (otherwise
there is nothing to prove). It is clear from the proof of 1) that
there is a simply transitive translation action of the group
$Z^1(\mh\otimes L^{\cdot})$ on the set $(\pi_*)^{-1}(A).$ Further,
any coboundary $b$ in $\mh\otimes L^{\cdot}$ can be represented as
$\partial(\iota(\partial u)t+X),$ where $X\in \mh\otimes L^0$ and
$u\in \mh.$ Thus, we have that
\begin{equation}\tilde{A}+b=\exp(\partial_{\beta}(u)t)\exp(X)(\tilde{A})\end{equation}
--- homotopic to $\tilde{A}.$ Therefore, we have the desired
transitive action of $H^0(\mh).$
\end{proof}

Now we are able to prove Lemma \ref{bijection}.

\begin{proof}[Proof of Lemma \ref{bijection}.] Proof of
surjectivity is the same as in Proposition \ref{bij_special}.

Now we prove the injectivity. Take some $r\geq 1.$ Denote by
$\pi_1:\mg/L_{r+1}\mg\to \mg/L_r\mg,$ $\pi_2:\mh/L_{r+1}\mh\to
\mg/L_r\mh$ the natural projections. It suffices to prove the
following

\begin{lemma} Let $\alpha,\beta\in MC(\mg/L_{r+1}\mg),$ $A\in MC((\mg/L_r\mg)\otimes \Omega_1),$ and
$B\in MC((\mh/L_{r+1}\mh)\otimes\Omega_1)$ be such that $B$ is the
homotopy between $\Phi_*(\alpha)$ and $\Phi_*(\beta),$ $A$ is the
homotopy between $\pi_{1*}(\alpha)$ and $\pi_{1*}(\beta),$ and
$\Phi_*(A)=\pi_2(B).$ Then there exists some homotopy
$\tilde{A}\in MC((\mg/L_r\mg)\otimes \Omega_1)$ between $\alpha$
and $\beta$ such that $\pi_{1*}(\tilde{A})=A,$ and
$\Phi_*(\tilde{A})$ is homotopic to $B.$
\end{lemma}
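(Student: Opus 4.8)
The plan is to run the injectivity argument of Proposition \ref{bij_special} and Lemma \ref{inj_special} one ``level up'', with homotopies (Maurer--Cartan solutions in the completed tensor products with $\Omega_1$) in the role that gauge transformations played there, and with Proposition \ref{obstr_modified} replacing Proposition \ref{obstr_theory}. As there, everything is reduced to the central extension $L_r\mg/L_{r+1}\mg\hookrightarrow\mg/L_{r+1}\mg\twoheadrightarrow\mg/L_r\mg$ (and similarly on the $\mh$-side), so all the obstruction classes involved live in the cohomology of the graded pieces $L_r\mg/L_{r+1}\mg$ and $L_r\mh/L_{r+1}\mh$. (Here the conclusion of the lemma should be read as $\tilde A\in MC((\mg/L_{r+1}\mg)\otimes\Omega_1)$, since a homotopy between $\alpha,\beta\in MC(\mg/L_{r+1}\mg)$ lives at that level.)

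First I would produce \emph{some} lift $\tilde A_0$ of $A$. By Proposition \ref{obstr_modified} 1) applied to the central ideal $L_r\mg/L_{r+1}\mg$, the obstruction to lifting the homotopy $A$ to a homotopy between $\alpha$ and $\beta$ in $MC((\mg/L_{r+1}\mg)\otimes\Omega_1)$ is a class $o_1^A(\alpha,\beta)\in H^1(L_r\mg/L_{r+1}\mg)$. By naturality of $o_1$ --- which on the graded pieces only sees the linear part $\Phi^1$, since every higher term $\Phi^{\geq 2}$ strictly raises filtration degree and hence vanishes on $L_r/L_{r+1}$ --- we get $\Phi^1_*\big(o_1^A(\alpha,\beta)\big)=o_1^{\Phi_*(A)}(\Phi_*(\alpha),\Phi_*(\beta))=o_1^{\pi_2(B)}(\Phi_*(\alpha),\Phi_*(\beta))$, and this vanishes because $B$ is a homotopy between $\Phi_*(\alpha)$ and $\Phi_*(\beta)$ with $\pi_2(B)=\Phi_*(A)$ (Proposition \ref{obstr_modified} 1), implication (i)$\Rightarrow$(ii)). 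Since $\Phi$ is a filtered $L_\infty$-quasi-isomorphism, $\Phi^1$ induces an isomorphism $H^1(L_r\mg/L_{r+1}\mg)\isomoto H^1(L_r\mh/L_{r+1}\mh)$, so $o_1^A(\alpha,\beta)=0$, and Proposition \ref{obstr_modified} 1) yields a homotopy $\tilde A_0\in MC((\mg/L_{r+1}\mg)\otimes\Omega_1)$ between $\alpha$ and $\beta$ with $\pi_{1*}(\tilde A_0)=A$.

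It remains to adjust $\tilde A_0$ within its fibre over $A$ so that $\Phi_*(\tilde A_0)$ becomes homotopic to $B$. Both $\Phi_*(\tilde A_0)$ and $B$ are homotopies between $\Phi_*(\alpha)$ and $\Phi_*(\beta)$ projecting under $\pi_2$ to $\Phi_*(A)=\pi_2(B)$, so by Proposition \ref{obstr_modified} 2) their homotopy classes differ by some $\bar c\in H^0(L_r\mh/L_{r+1}\mh)$, the group acting transitively on such classes. On the $\mg$-side, Proposition \ref{obstr_modified} 2) gives a transitive $H^0(L_r\mg/L_{r+1}\mg)$-action on homotopy classes of lifts of $A$; the map $\Phi$ (again through $\Phi^1$ on the graded pieces) intertwines the two actions along $H^0(L_r\mg/L_{r+1}\mg)\to H^0(L_r\mh/L_{r+1}\mh)$, and the latter map is surjective since $\Phi$ is a filtered quasi-isomorphism. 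Choosing $c$ mapping to $\bar c$ and taking $\tilde A$ to be a representative of $c\cdot[\tilde A_0]$, we keep $\pi_{1*}(\tilde A)=A$ (the action preserves the fibre over $A$) and obtain $[\Phi_*(\tilde A)]=\bar c\cdot[\Phi_*(\tilde A_0)]=[B]$, as required. The main obstacle is purely bookkeeping: one must verify that the obstruction maps and group actions of Proposition \ref{obstr_modified} are natural with respect to $\Phi$ even though $\Phi$ is merely an $L_\infty$-morphism --- which is routine precisely because all of them are computed on the associated graded $L_\bullet/L_{\bullet+1}$, where $\Phi$ is represented by its linear term $\Phi^1$ alone.
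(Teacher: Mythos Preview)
Your proof is correct and follows exactly the approach the paper indicates: the paper's own proof is the one-liner ``This follows from Proposition \ref{obstr_modified}, analogously to Lemma \ref{inj_special},'' and you have simply spelled out that analogy --- first killing $o_1^A(\alpha,\beta)$ via naturality and the filtered quasi-isomorphism hypothesis (Proposition \ref{obstr_modified} 1)), then adjusting the lift by the transitive $H^0$-action (Proposition \ref{obstr_modified} 2)), exactly as Lemma \ref{inj_special} does with parts 3) and 4) of Proposition \ref{obstr_theory}. Your remark that naturality of the obstruction maps under the $L_\infty$-morphism $\Phi$ reduces to naturality under the chain map $\Phi^1$ on the associated graded pieces is the one point the paper leaves implicit, and your observation about the typo in the target of $\tilde A$ is also correct.
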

\begin{proof} This follows from Proposition \ref{obstr_modified},
analogously to Lemma \ref{inj_special}.
\end{proof}
Lemma is proved
\end{proof}


\begin{thebibliography}{99}

\bibitem[Ab]{Ab} M. Abouzaid,  On the Fukaya categories of higher genus surfaces, Adv. Math., 217:1192–
1235, 2008.

\bibitem[Au]{Au} D. Auroux,  Mirror symmetry and T-duality in the complement of an anticanonical
divisor, J. Gokova Geom. Topol. 1 (2007), 51-91.

\bibitem[AS]{AS}M.~Abouzaid, I. Smith, Homological mirror symmetry for the
four-torus, arXiv:0903.3065v2 (preprint).

\bibitem[BP]{BP} V. Baranovsky, J. Pecharich, On equivalences of
derived and singular categories, arXiv:0907.3717 (preprint).

\bibitem[BKR]{BKR}T. Bridgeland, A. King, and M. Reid,  The McKay correspondence as an equivalence of
derived categories, J. Amer. Math. Soc., 14:535–554, 2001.

\bibitem[CR]{CR}A. Craw and M. Reid,  How to calculate $A\text{-}$Hilb $\C^3,$
In Geometry of toric varieties, volume 6 of S.emin. Congr., pages
129–154. Soc. Math. France, 2002.

\bibitem[ELO2]{ELO}A.~Efimov, V.~Lunts, D.~Orlov, Deformation theory of
objects in homotopy and derived categories II:
pro-representability of the deformation functor,
arXiv:math/0702839 (preprint), to appear in Adv. Math.

\bibitem[Ge]{Ge} E. Getzler, Lie theory for nilpotent
$L_{\infty}$-algebras, Ann. Math. Vol. 170 (2009), No. 1, 271–301.

\bibitem[GM]{GM} W. Goldman, J. Milson,  The deformation theory of
representations of fundamental groups of compact Kahler manifolds,
Publ. Math. IHES, 67 (1988), 43-96.

\bibitem[HKR]{HKR} G. Hochshild, B. Kostant, and A. Rosenberg,  Differential forms on regular
affine algebras, Trans. Amer. Math. Soc., 102:383–408, 1962.

\bibitem[Ka]{Ka} L. Katzarkov,  Birational geometry and homological mirror
symmetry, Real and complex singularities, pages 176–206, World
Sci. Publ., 2007.

\bibitem[KKP]{KKP} L. Katzarkov, M. Kontsevich, T. Pantev, Hodge theoretic aspects of mirror
symmetry, Proceedings of Symposia in Pure Mathematics vol. 78
(2008), "From Hodge theory to integrability  and TQFT:
tt*-geometry",  eds. Ron Y. Donagi and Katrin Wendland, 87-174.

\bibitem[KKOY]{KKOY}A.~Kapustin, L.~Katzarkov, D.~Orlov and M.~Yotov, Homological mirror symmetry for manifolds
of general type,  Central European Journal of Mathematics
7(4), 571-605.

\bibitem[Ke1]{Ke1}B. Keller,  Derived invariance of higher structures on the Hochschild
complex, available at www.math.jussieu.fr/$\tilde{}$keller
(preprint).

\bibitem[Ke2]{Ke} B. Keller,  Introduction to A-infinity algebras and modules, Homology Homotopy Appl.
%(electronic), 3:1–35, 2001.

\bibitem[Ko1]{Kon}M. Kontsevich,  Homological algebra of mirror symmetry, in Proceedings of the International
Congress of Mathematicians (Z\"urich, 1994), pages 120–139.
Birkh\"auser, 1995.

\bibitem[Ko2]{Ko} M. Kontsevich,  Deformation quantization of Poisson manifolds
I, Lett. Math. Phys., 66:157–216, 2003.

\bibitem[Ko3]{Ko2}M. Kontsevich,  Deformation quantization of algebraic varieties, Lett. Math. Phys. 56
(2001), no. 3, 271-294.



%\bibitem[KS]{KS}M. Kontsevich and Y. Soibelman,  Homological mirror symmetry and torus
%fibrations, in Symplectic geometry and mirror symmetry, pages
%203–263. World Scientific, 2001.

\bibitem[LM]{LM}T. Lada and M. Markl,  Strongly homotopy Lie
algebras, Comm. Algebra, 23:2147–2161, 1995.

\bibitem[Ma]{Ma}M. Matsumoto,  A simple presentation of mapping class groups in terms of Artin
groups, Sugaku Expositions 15 (2002), no. 2, 223--236.

%\bibitem[Or1]{Or1} D. Orlov,  D-branes of type B in Landau-Ginzburg models and mirror symmetry for the
%genus two curve, Conference talk, IAS Princeton, 3/12/2008.

\bibitem[Or1]{Or} D. Orlov,  Triangulated categories of singularities and D-branes in
Landau-Ginzburg models, Proc. Steklov Inst. Math., 246:227–248,
2004.

\bibitem[Or2]{Or2}D. Orlov, Formal completions and idempotent completions of triangulated categories of
singularities, arXiv:0901.1859v1 (preprint).

\bibitem[PZ]{PZ} A. Polishchuk and E. Zaslow,  Categorical mirror symmetry: the elliptic curve, Adv.
Theor. Math. Phys., 2:443–470, 1998.

\bibitem[QV]{QV} A.~Quintero Velez, McKay correspondence for Landau-Ginzburg
models, Commun. Number Theory Phys. 3 (2009), no. 1, 173-208.

\bibitem[Se1]{Se} P. Seidel,  Homological mirror symmetry for the genus $2$
curve, arXiv:0812.1171 (preprint).

\bibitem[Se2]{Se2} P. Seidel,  Fukaya categories and Picard-Lefschetz
theory,
European Math. Soc., 2008.

\bibitem[Se3]{Se3} P. Seidel, Homological mirror symmetry for the quartic surface, arXiv:math/0310414 (preprint).



\end{thebibliography}
\end{document}